\newcommand{\RR}{{\mathbb R}}
\newcommand{\CC}{{\mathbb C}}
\newcommand{\NN}{{\mathbb N}}
\newcommand{\ZZ}{\mathbb Z}
\newcommand{\KK}{\mathbb K}
\newcommand{\PP}{{\mathbb P}}
\def\A{{\mathcal A}}
\def\B{{\mathcal B}}
\def\C{{\mathcal C}}
\def\G{{\mathcal G}}
\def\L{{\mathcal L}}
\def\O{{\mathcal O}}
\newcommand{\ones}{{\textbf{$\text{1}$}}} 
\DeclareMathOperator{\card}{card}
\numberwithin{equation}{section}
\theoremstyle{plain}
\newtheorem{theo}{Theorem}
\newtheorem{prop}[theo]{Proposition} 
\newtheorem{coro}[theo]{Corollary}
\newtheorem{lemma}[theo]{Lemma}
\theoremstyle{definition}
\theoremstyle{remark}
\newtheorem{remark}[theo]{Remark}
\begin{document}

\parindent = 0cm

\title[Decidability, Arithmetic Subsequences and Eigenvalues ]{Decidability, Arithmetic Subsequences and Eigenvalues of Morphic Subshifts}

\author{Fabien Durand \and Val\'erie Goyheneche}

\address{Laboratoire Ami\'enois
de Math\'ematiques Fondamentales et Appliqu\'ees, CNRS-UMR 7352,
Universit\'{e} de Picardie Jules Verne, 33 rue Saint Leu, 80000
Amiens, France.}

\email{\{fabien.durand,valerie.goyheneche\}@u-picardie.fr}

\subjclass[2010]{37B10, 54H20, 68R15, 68Q45}
\keywords{constant arithmetic subsequence, eigenvalue, subshift, morphic}
\date{\today}

\begin{abstract}
We prove decidability results on the existence of constant subsequences of uniformly recurrent morphic sequences along arithmetic progressions. 
We use spectral properties of the subshifts they generate to give a first algorithm deciding whether, given $p\in \mathbb{N}$, there exists such a constant subsequence along an arithmetic progression of common difference $p$.
In the special case of uniformly recurrent automatic sequences we explicitely describe the sets of such $p$ by means of automata. 
\end{abstract}

\maketitle


\section{Introduction}


In this paper we are concerned by arithmetic subsequences $(x_{k+np})_{n\in \mathbb{Z}}$ of morphic sequences $x = (x_n)_{n\in \mathbb{Z}}$ 
and to decision problems concerning constant such subsequences. 
Namely,

\bigskip

{\bf Input}: Two finite alphabets $\A$ and $\B$, an endomorphism $\sigma : \A^* \to  \A^*$ and a morphism $\phi : \A^* \to \B^*$.

\medskip

{\bf Question 1}: Given $p\in \mathbb{N}$ and an admissible fixed point  $x\in \A^\ZZ $ of $\sigma$, does there exist a constant subsequence $y = ((\phi (x))_{k+np})_{n\in \mathbb{Z}}$ ?

\medskip

{\bf Question 2}: Given an admissible fixed point  $x\in \A^\ZZ $, does there exist a constant subsequence $y = ((\phi (x))_{k+np})_{n\in \mathbb{Z}}$ ?

\medskip

{\bf Question 3}: Does there exist an algorithm that computes the set of integers $p\in \mathbb{N}$ satisfying the requirement of Question 1 ?

\medskip

Let us recall that Cobham showed \cite[Sec.\ 5]{Cobham:1972} arithmetic subsequences of $l$-automatic sequences are $l$-automatic. 
Durand extended this result \cite[Sec.\ I.4]{Durand:thesis} to primitive substitutive sequences.
The periodicity of such sequences is decidable and  
the length of the period can be found \cite{Harju&Linna:1986,Pansiot:1986, Honkala:1986,Durand:2013a}.
Therefore, Question 1 is decidable for $l$-automatic and primitive substitutive sequences.

We provide a different proof using dynamical systems, namely subshifts, and some of their spectral properties, for uniformly recurrent morphic sequences, and Presburger arithmetic for automatic sequences.
For uniformly recurrent automatic sequences, we show Question 2 is 
algorithmically 
decidable  and that there exists an algorithm for Question 3.
For the non uniformly recurrent automatic sequences, both questions are open. 

\medskip

In Section \ref{section:def} are given the basic definitions on morphic sequences and on subshift dynamical systems.
In Section \ref{sec:eigandsubseq} we show the relation between constant arithmetic subsequences of uniformly recurrent sequences and the spectral eigenvalues of the subshifts they generate. 
We recall some wellknown results on dynamical eigenvalues of Sturmian \cite{Kurka}, constant length substitution \cite{Dekking1978} and Toeplitz subshifts \cite{Williams}. 
As a direct consequence we observe in Section 4 that intersections of languages coming from these subshifts should be finite.
We prove the decidability of Question 1 for uniformly recurrent morphic sequences in Section \ref{section:algo}.
The decidability of Question 2 and Question 3 is proven in Section \ref{sec:constantlength} but only for uniformly recurrent automatic sequences. 
The description of the set of integers $p\in \mathbb{N}$ satisfying the requirement of Question 1 is given by means of allowed paths in a finite automaton.


\section{Definitions and background}
\label{section:def}
\subsection{Words and sequences}

In all this article, $\mathcal A$ will stand for an {\em alphabet}, that is a finite set of elements called \textit{letters}.
 A \textit{word} is an element of the free monoid $\mathcal{A}^*$ generated by $\mathcal A$. 
The neutral element of  $\mathcal{A}^*$ is called the \textit{empty word } and is denoted by $\epsilon$. 
We set $\A^+ = \A^* \setminus \{ \epsilon \}$.
For $u = u_0 u_1 u_2 u_3\cdots u_{n-1}$ in $\mathcal A^n\subset \mathcal A^*$, $n$ is called the {\em length} of $u$ and is denoted by $|u|$.
{\it One-sided sequences} are elements of 
$\mathcal A^{\mathbb N}$ and {\it sequences} are  elements of 
$\mathcal A^{\mathbb Z}$.
The sets $\mathcal A^{\mathbb N}$ and $\mathcal A^{\mathbb Z}$ are endowed with the product topology.
For convenience, we use a dot to separate negative and positive indices:
 a sequence $x\in\A^\ZZ$ would be written $\cdots x_{-2} x_{-1}.x_0 x_1 x_2\cdots$.
We set $u_{[p,q]} = u_p u_{p+1}\cdots u_q$. The integer $p$  is the \textit{occurrence} of the {\em factor} $u_{[p,q]}$ in the word $u$. If a factor has several occurrences in a word $u$, we call \textit{gap} the difference between two successive occurrences. 
We denote by $\L (u)$ the set of all factors of $u$ and we call it the {\it language of } $u$.
The number of occurrences of a word $v$ in the word $u$ is denoted by $|u|_v$.
These definitions also hold when $u$ belongs to $\mathcal A^{\ZZ}$ or $\A^\NN$.

Let $u$ and $v$ be two words in $\A^*$. 
The set $\{ x\in \A^ \ZZ : x_{[-|u|,|v|-1]}=uv\}$ is called a {\em cylinder} set and denoted $[u.v]$, or $[v]$ if $u$ is the empty word.
These sets generates the product topology of $\A^\ZZ$.

If $u = vw$ belongs to $\A^*$ or $\A^\NN$, the word $v$ is called \textit{prefix} of $u$.
An \textit{arithmetic subsequence} of $x$ in $\mathcal A^{\ZZ} $ (resp. $\A^\NN$) is a sequence of the form $(u_{k+np})_{n\in\ZZ}$ (resp.  $(u_{k+np})_{n\in\NN}$) for some $k$ and $p$. The integer $p$ is called the {\em common difference} of this arithmetic subsequence of $x$. 

The sequence $x\in\A^\NN$ (resp. in $\A^\ZZ$) is \emph{periodic} with period $p$ if there exists $p\in\NN$ such that $x_i =x_{i+p}$ for all $i\in\NN$ (resp. $\ZZ$). 
Otherwise, $u$ is \emph{non-periodic}.

A sequence $x$ is \textit{recurrent} if every factor of $x$ occurs in $x$ infinitely often.
It is \textit{uniformly recurrent} if every factor of $x$ occurs in $x$ with bounded gaps.

In the sequel $\A $ and $\B$ will stand for alphabets.

\subsection{Morphisms and matrices} 

Let $\sigma$ be a {\it morphism} 
from $\A^*$ to $\B^*$. 
When $\sigma (\A) = \B$, we say that $\sigma$ is a {\em coding}. 
If $\sigma (\A)$ is included in $\B^+$, it induces by concatenation a map
from $\A^{\mathbb{N}}$ to $\B^{\mathbb{N}}$. These two maps are also called $\sigma$.
To the morphism $\sigma$ is naturally associated the matrix $M_{\sigma} =
(m_{i,j})_{i\in \B , j \in \A }$ where $m_{i,j}$ is the number of
occurrences of $i$ in the word $\sigma(j)$.
We call it the {\em incidence matrix} of    $\sigma$.
We set $|\sigma | = \max_{a\in \A} |\sigma (a)|$ and $\langle \sigma \rangle = \min_{a\in \A} |\sigma (a)|$.

\subsection{Substitutions}


In the sequel we use the definition of substitution presented in \cite{Queffelec:2010} and the notion of substitutive sequence defined in \cite{Durand:1998}.

The \emph{language} of the endomorphism $\sigma : \A^* \to \A^*$ is the set $\mathcal{L} (\sigma ) $ of words appearing in some $\sigma^n (a)$, $a\in \A$.

If there exist a letter $a\in \A$ and a non-empty word $u\in \A^*$ such that $\sigma(a)=au$ and moreover, if $\lim_{n\to+\infty}|\sigma^n(a)|=+\infty$, then $\sigma$ is said
    to be {\em right-prolongable on $a$}.
Analogously, we define the notion to be {\it left-prolongable}.
The endomorphism $\sigma$ is {\it prolongable} on $b.a$ if it is left-prolongable on $b$, right-prolongable on $a$ and if $ba$ belongs to $\L (\sigma )$.
The endomorphism $\sigma$ is a {\em substitution} whenever it is prolongable and {\it growing} (that is, $\lim_n \langle \sigma^n \rangle = +\infty$). 
We say $\sigma$ is {\em left-proper} if there exists $a\in A$ such that $\sigma(A)$ is included in $ a A^*$.
It is {\em right-proper} if there exists $b\in A$ such that $\sigma(A)$ is included in $  A^*b$.
It is {\em proper} whenever it is both left and right proper.

A letter $c$ such that $\lim_{n\to +\infty}|\sigma^n (c)| = +\infty$ is called a {\em growing letter}.

It is an exercise to check that when $\sigma $ is right-prolongable on $a$, the sequence $(\sigma^n(aa\cdots ))_{n\ge 0}$ converges to a sequence denoted by  $\sigma^\infty(a)$. 
The map $\sigma : \A^\NN \to \A^\NN$ being continuous, $\sigma^\infty( a)$ is a fixed point of $\sigma$: $\sigma (\sigma^\infty( a)) = \sigma^\infty( a)$.
We say it is an {\em admissible one-sided fixed point}.
In the same way  $x=\sigma^\infty( b.a)$ can be defined and is a {\em two-sided} fixed point of $\sigma$.
When $ba$ belongs to $\L (\sigma )$ we say $x$ is an {\em admissible} fixed point of the endomorphism $\sigma$. 
We also say that $x$ is {\em purely morphic} (with respect to $\sigma$) or {\em purely substitutive} when $\sigma$ is a substitution. 
If $x\in \A^\mathbb{ Z}$ is purely morphic and $\phi:\A^*\to \B^*$ is a morphism then the sequence $y=\phi (x)$ is said to be a {\em morphic sequence} (with respect to $\phi$ and $\sigma$). 
We say $y$ is {\em $\alpha$-morphic} whenever the dominant eigenvalue of $\sigma $ is $\alpha$.
When $\phi $ is a coding and $\sigma $ a substitution, we say that $y$ is {\em substitutive} (with respect to $\phi $ and $\sigma$).

Whenever the matrix associated to $\sigma $ is
primitive (that is, when it has a power with positive coefficients) we say that $\sigma$ is a {\it primitive endomorphism}.  
In this situation we easily check that $\sigma^\infty ( a)$ and $\sigma^\infty ( b.a)$ are uniformly recurrent.
Moreover, one has $\mathcal{L} (\sigma ) =  \mathcal{L} (\sigma^\infty ( a)) = \mathcal{L} (\sigma^\infty ( b.a))$.

A sequence is {\em primitive substitutive} if it is substitutive with respect to a primitive substitution. 
Such sequences are uniformly recurrent.
As we will see later (Theorem \ref{thm:morphicsubstitutive}), the set of uniformly recurrent morphic sequences is exactly the set of primitive substitutive sequences.
If $|\sigma(a)| = p$ is constant for all $a\in  \A$, we say $\sigma$ is of \textit{constant-length} $p$.
In the other case, $\sigma$ is of \textit{non constant-length}.










\subsection{Dynamical systems, subshifts and eigenvalues}

For more details, we refer to Queff\'elec's book \cite{Queffelec:2010}.

A {\em topological dynamical system} is a couple $(X,T)$ where $X$ is a compact metric space and $T:X\rightarrow X$ a homeomorphism.
The topological dynamical system $(X',T')$ is a {\em factor} of $(X,T)$ whenever there exists a continuous and onto map $f : X \to X'$ such that $f\circ T = T' \circ f$.
We say $f  :(X,T) \to (X' , T)$ is a factor map. 
If $f$ is one-to-one and onto we say $(X,T)$ and $(X',T')$ are {\em isomorphic} and that $f$ is an {\em isomorphism}. 
We call \textit{orbit of} $x\in X$ the set $\O(x) =\{T^nx ; n\in\mathbb Z\}$.
We say that $(X,T)$ is \textit{minimal} if every orbit is dense in $X$. 
Equivalently, the only closed $T$-invariant sets in $X$ are $X$ and $\emptyset$.
We say it is \emph{$p$-periodic} for some $p\in \mathbb{N}$ whenever there exists $x\in X$ such that $X=\{ x,  Tx, T^2 x, \dots ,T^{p-1} x\}$ and $T^p x = x$. 
If such an integer $p$ does not exist, $(X,T)$ is said \emph{non-periodic}.
If $X$ is a Cantor set (i.e., a compact space without isolated points and having a countable base consisting of closed open sets, called {\em clopen} sets), we say $(X, T)$ is a \emph{Cantor system}.

We say that $\lambda \in \CC$ is an {\em eigenvalue} of $(X, T)$ whenever there exists a  continuous function $f : X\rightarrow \CC$ satisfying 
$f\circ T = \lambda f$. 
Such a $f$ is called an {\em eigenfunction} of $(X, T)$. 

Suppose $(X,T)$ is minimal.
Then, by compactness, $|f|$ is a constant function and $|\lambda| = 1$. 
It follows that there exists $\alpha\in\RR$ such that $\lambda=\exp(2i\pi\alpha)$. 
Such $\alpha$ are called {\it additive eigenvalues}.
They form an additive subgroup of $\mathbb{R}/\mathbb{Z}$ 

Let $\A$ be an alphabet. 
The shift map on $\A^\ZZ$ is the map $S : \A^\ZZ \to \A^\ZZ$ defined by $(Sx)_n = x_{n+1}$ for all $n\in \ZZ$.
A {\em subshift}  is a topological dynamical system $(X,S_{/X})$ where $X$ is a subset of $\A^\ZZ$.
In what follows, $S$ will stand for the shift map independently on the alphabet we consider and $S$ will be used instead of $S_{/X}$.

If $\sigma$ is a primitive substitution and $x$ one of its fixed points, we call \emph{primitive substitution subshift} generated by $\sigma$ the subshift $(\overline{\O(x)}, S)$, also denoted $(X_\sigma, S)$.
It does not depend on the choice of the fixed point $x$ \cite[Prop.\ 5.3]{Queffelec:2010}.

\section{Relation between eigenvalues of subshifts and constant arithmetic subsequences}
\label{sec:eigandsubseq}

In this section, given a minimal Cantor system, we  recall some well-known facts concerning the relations between its periodic behaviours and its eigenvalues.
This is part of the folklore of ergodic theory of such dynamical systems. 
Before we need some definitions.

Let us introduce some terminology already used to study Toeplitz subshifts \cite{Williams}.
Let $(X,T)$ be a minimal Cantor system, $x\in X$ and $U$ be a clopen subset of $X$.
We call {\em period}-$p$ {\em skeleton} of $x$ relatively to $U$ the set 
\begin{align*}
{\rm PS}_p (x , U) & = \{ k  \in \ZZ: T^{k+np} x \in  U , \; \forall n\in \mathbb{Z} \} . \\
\end{align*}
We say $p$ is an {\em essential period} of $x$ for $U$ if:
\begin{enumerate}
\item
${\rm PS}_p (x , U) \neq \emptyset$ and 
\item
$p$ divides $q$ for every $q$ satisfying ${\rm PS}_p (x, U) = {\rm PS}_p (x,U) -q$.
\end{enumerate}

The proofs of the following lemmas are left to the reader. 

\begin{lemma}
\label{lemma:rmq}
Let $x\in X$ and $U$ be a clopen subset of $X$.
Then, for all $n$, $${\rm PS}_p (T^n x, U) = {\rm PS}_p (x,U) -n.$$
\end{lemma}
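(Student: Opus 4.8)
The plan is to prove the equality of sets directly from the definition of the period-$p$ skeleton, by unwinding the membership condition. I would fix the integer $n$ once and for all and take an arbitrary $k\in\ZZ$; the goal then reduces to showing that $k$ lies in ${\rm PS}_p(T^n x, U)$ if and only if $k$ lies in ${\rm PS}_p(x,U)-n$, that is, if and only if $k+n\in {\rm PS}_p(x,U)$.

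Before computing, I would relabel the quantifier variable in the definition of ${\rm PS}_p$ to avoid the notational clash with the fixed shift $n$ in the statement, writing the defining condition for membership of $k$ in ${\rm PS}_p(y,U)$ as: $T^{k+mp}y\in U$ for all $m\in\ZZ$. Then $k\in {\rm PS}_p(T^n x,U)$ means $T^{k+mp}(T^n x)\in U$ for all $m\in\ZZ$. Since $T$ is a homeomorphism, its iterates compose additively, so $T^{k+mp}\circ T^n = T^{(k+n)+mp}$, and the condition becomes $T^{(k+n)+mp}x\in U$ for all $m\in\ZZ$, which is exactly the condition $k+n\in {\rm PS}_p(x,U)$. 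This chain of equivalences gives ${\rm PS}_p(T^n x,U)=\{k\in\ZZ : k+n\in {\rm PS}_p(x,U)\}={\rm PS}_p(x,U)-n$, as required.

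There is no substantial obstacle here: the statement is a formal consequence of the shift-covariance of the skeleton together with the group law $T^a\circ T^b=T^{a+b}$. The only point that requires a moment of care is the clash of notation between the free variable $n$ appearing in the lemma and the bound (universally quantified) variable also named $n$ in the definition of ${\rm PS}_p$; renaming the latter to $m$ makes the computation transparent and removes any risk of confusion.
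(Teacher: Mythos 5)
Your proof is correct: the chain of equivalences $k\in {\rm PS}_p(T^n x,U) \Leftrightarrow T^{(k+n)+mp}x\in U$ for all $m\in\ZZ$ $\Leftrightarrow k+n\in {\rm PS}_p(x,U)$ is exactly the routine verification the paper has in mind when it states that ``the proofs of the following lemmas are left to the reader,'' and your renaming of the bound variable to $m$ is the right precaution. Nothing further is needed.
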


\begin{lemma}
\label{lemma:empty}
For all $x,y\in X$ and all clopen set $U$ one has
$$
{\rm PS}_p (x,U ) = \emptyset  \Longleftrightarrow {\rm PS}_p (y,U ) = \emptyset .
$$ 
\end{lemma}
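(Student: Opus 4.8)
The plan is to reduce the statement to a single application of minimality, by recognizing that whether ${\rm PS}_p(x,U)$ is empty is governed by one closed $T$-invariant subset of $X$ that does not depend on $x$. First I would introduce the set
\[
W = \bigcap_{n\in\ZZ} T^{-np}U,
\]
which is closed, being an intersection of clopen (hence closed) sets, and which is $T^p$-invariant: reindexing $n\mapsto n+1$ shows $T^{-p}W = W$. The point of $W$ is the reformulation $k\in{\rm PS}_p(x,U)\iff T^k x\in W$, so that ${\rm PS}_p(x,U)\neq\emptyset$ is equivalent to $x\in V$, where $V=\bigcup_{k\in\ZZ}T^{-k}W$.

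The key observation is that $V$ is closed. Since $W$ is $T^p$-invariant, the set $T^{-k}W$ depends only on the residue of $k$ modulo $p$, so the a priori infinite union collapses to the finite union $V=\bigcup_{k=0}^{p-1}T^{-k}W$ of closed sets, which is therefore closed. Moreover $V$ is visibly $T$-invariant, since applying $T$ merely shifts the index $k$ by one.

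With these two facts the conclusion follows from minimality of $(X,T)$: the only closed $T$-invariant subsets of $X$ are $\emptyset$ and $X$, hence either $V=\emptyset$ or $V=X$. In the first case ${\rm PS}_p(x,U)=\emptyset$ for every $x\in X$, and in the second ${\rm PS}_p(x,U)\neq\emptyset$ for every $x\in X$; either way the emptiness of ${\rm PS}_p(x,U)$ is independent of the chosen point, which is exactly the asserted equivalence (and gives both directions of the \emph{iff} simultaneously).

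I expect the only delicate point to be the collapse of the union defining $V$ to a finite one, which rests entirely on the $T^p$-invariance of $W$. Without it one would only obtain that $V$ is an $F_\sigma$ set that is $T$-invariant; its closure would then be $T$-invariant and nonempty, forcing $V$ to be dense, but density is not enough to conclude $V\in\{\emptyset,X\}$. Everything else is routine bookkeeping with the definitions of ${\rm PS}_p$ and of the shift action.
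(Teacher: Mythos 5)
Your proof is correct, and it is essentially the argument the paper intends: the paper leaves this lemma to the reader, and its own proof of Proposition \ref{eigenvalues} (implication (2) $\Rightarrow$ (3)) uses exactly the same device, namely a closed set invariant under $T^p$ whose translates $T^{-k}W$, $0\leq k\leq p-1$, assemble into a closed $T$-invariant set to which minimality is applied. Your identification of the collapse of $\bigcup_{k\in\ZZ}T^{-k}W$ to a finite union via $T^{-p}W=W$ as the one non-routine step is accurate, and that step is handled correctly.
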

We call {\em periodic spectrum of} $(X,T)$ the following set
\begin{align*}
\mathbb{P} (X, T) = 
\{ p \geq 2 : p \ \hbox{\rm   is an essential period for some clopen set } U \} .
\end{align*}

The following proposition will be useful for our study. It states 
in the two first properties
that the set $\PP(X,T)$ can be interpreted as the set of denominators of additive rational eigenvalues associated to $(X,T)$.

Together with the third property, it gives a necessary condition to have constant arithmetic subsequences,
which will be detailed in the next lemma.

\begin{prop} \label{eigenvalues}
Let  $(X,T)$ be a minimal dynamical system and $p\in \NN$.
Then, the following are equivalent:
\begin{enumerate}
\item
\label{item:1}
$\lambda = \exp(2i\pi / p)$ is an eigenvalue of $(X,T)$;
\item
\label{item:2}
$p$ belongs to $\mathbb{P} (X,T)$;
\item
\label{item:3}
there exists a closed subset $V$ of $X$ such that $\{ V , T^{-1} V , \ldots, T^{-p+1}V\}$ is a partition of $X$;
\item
\label{item:4}
$X$ admits a periodic factor with essential period $p$.
\end{enumerate}
\end{prop}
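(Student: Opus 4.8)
The plan is to prove the cycle of implications $(1)\Rightarrow(3)\Rightarrow(2)\Rightarrow(4)\Rightarrow(1)$, which yields all four equivalences simultaneously. Throughout I assume $p\ge 2$, the case $p=1$ being trivial: the constant function is an eigenfunction for $\lambda=1$, the choice $V=X$ realises $(3)$, and the one-point system realises $(4)$, while $\PP(X,T)$ is defined only for $p\ge 2$.

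For $(1)\Rightarrow(3)$, let $f$ be an eigenfunction for $\lambda=\exp(2i\pi/p)$. Since $(X,T)$ is minimal, $|f|$ is a nonzero constant, so after normalisation $f$ takes values in the unit circle and $f(X)$ is a compact set invariant under multiplication by $\lambda$; minimality forces $(f(X),\times\lambda)$ to be minimal, hence $f(X)$ is a single $\langle\lambda\rangle$-orbit, which is exactly the set of $p$-th roots of unity because $\lambda$ has order $p$. Then $V=f^{-1}(\{1\})$ is clopen (the value $1$ is isolated in the finite set $f(X)$) and $T^{-k}V=f^{-1}(\{\lambda^{-k}\})$, so $\{V,T^{-1}V,\dots,T^{-(p-1)}V\}$ is a partition of $X$. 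For $(3)\Rightarrow(2)$, a finite partition of a compact space into closed sets is automatically a partition into clopen sets, so $V$ is clopen; applying $T^{-1}$ to the partition and comparing with the original one gives $T^{-p}V=V$. Taking $U=V$ and using $T^{np}V=V$, one checks directly that ${\rm PS}_p(x,V)$ equals the unique residue class $j\bmod p$ with $x\in T^{-j}V$; this set is nonempty and its group of periods in $\ZZ$ is exactly $p\ZZ$, so $p$ is an essential period for $V$ and $p\in\PP(X,T)$.

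The heart of the proof is $(2)\Rightarrow(4)$. Fix a clopen $U$ for which $p$ is essential and write $A_x={\rm PS}_p(x,U)$, viewed as a subset of $\ZZ/p\ZZ$ (it is $p$-periodic in $\ZZ$, since $k\in A_x\Leftrightarrow k+p\in A_x$). Lemma~\ref{lemma:rmq} reads $A_{Tx}=A_x-1$, Lemma~\ref{lemma:empty} together with essentiality gives $A_x\neq\emptyset$ for every $x$, and essentiality says precisely that the shift-stabiliser of $A_x$ in $\ZZ/p\ZZ$ is trivial. I would then show that $x\mapsto A_x$ is \emph{locally constant}. Each set $\{x:k\in A_x\}=\bigcap_{n\in\ZZ}T^{-(k+np)}U$ is closed, so $x\mapsto|A_x|=\sum_{k}\mathbf 1_{\{k\in A_x\}}$ is upper semicontinuous; as $|A_{Tx}|=|A_x|$, this integer-valued function is $T$-invariant, so each set $\{x:|A_x|\ge m\}$ is closed and invariant, hence empty or all of $X$, and minimality makes $|A_x|\equiv m$ constant. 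Constancy of the cardinality upgrades upper semicontinuity to continuity: if $x_j\to x$ and some $k\in A_x$ failed to lie in $A_{x_j}$ along a subsequence, a further subsequence would have $A_{x_j}$ equal to a fixed $m$-set $B\not\ni k$, and closedness of $\{x:\ell\in A_x\}$ would force $B\subseteq A_x$, hence $B=A_x$ by equal cardinality, contradicting $k\in A_x$.

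It then follows that $\Phi:X\to\{0,1\}^{\ZZ/p\ZZ}$, $\Phi(x)=\mathbf 1_{A_x}$, is a continuous factor map onto a finite subshift for the cyclic shift $S$, with $\Phi\circ T=S\circ\Phi$ and $S^{q}\mathbf 1_{A}=\mathbf 1_{A-q}$. By minimality the image is a single $S$-orbit, and triviality of the stabiliser forces $S^{q}\mathbf 1_A=\mathbf 1_A$ only when $q\in p\ZZ$, so this orbit has length exactly $p$; thus $(\Phi(X),S)$ is a periodic factor with essential period $p$, which is $(4)$. Finally $(4)\Rightarrow(1)$ is immediate, by composing a factor map onto the $p$-cycle with the character $j\mapsto\exp(2i\pi j/p)$ to obtain a continuous eigenfunction for $\lambda$. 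I expect the only genuinely delicate step to be the passage from upper semicontinuity to continuity of $x\mapsto A_x$, which is exactly where the minimality of $(X,T)$ (constancy of $|A_x|$) and the essentiality of $p$ (triviality of the stabiliser) are both needed.
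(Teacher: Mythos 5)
Your proof is correct, and it takes a genuinely different route from the paper's: you prove the cycle $(1)\Rightarrow(3)\Rightarrow(2)\Rightarrow(4)\Rightarrow(1)$, whereas the paper proves $(1)\Rightarrow(2)\Rightarrow(3)\Rightarrow(4)\Rightarrow(1)$, and the middle implications are handled by different means. For $(1)\Rightarrow(3)$ you argue directly on the eigenfunction: since $\lambda$ has order $p$ and factors of minimal systems are minimal, $f(X)$ is a single $\langle\lambda\rangle$-orbit, namely the $p$-th roots of unity (note that your ``normalisation'' must be division by $f(z)$, not merely rescaling to the unit circle, so that $1\in f(X)$), and the level sets of $f$ give the clopen partition; the paper instead goes through skeletons, taking $U=f^{-1}(\{1\})$ and checking essentiality from $\lambda^q=1$. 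More substantially, your $(2)\Rightarrow(4)$ builds the periodic factor in one stroke from the map $x\mapsto {\rm PS}_p(x,U)$ viewed in $\ZZ/p\ZZ$, and your semicontinuity argument (each $\{x: k\in {\rm PS}_p(x,U)\}=\bigcap_n T^{-(k+np)}U$ is closed, so the cardinality is upper semicontinuous, $T$-invariant, hence constant by minimality, which upgrades the map to a locally constant one) supplies a justification the paper glosses over: in the paper's proof of $(2)\Rightarrow(3)$, the set $V=\{y : {\rm PS}_p(y,U)={\rm PS}_p(x,U)\}$ is simply asserted to be closed, yet the equality constraint involves complements of the closed sets $\{y:k\in{\rm PS}_p(y,U)\}$, so closedness is not immediate and in fact needs precisely your constant-cardinality argument (or a substitute). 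In exchange, the paper's ordering is more economical: once the closed partition of $(3)$ is granted, the factor in $(4)$ is a one-line quotient onto $(\ZZ/p\ZZ,R)$, whereas you extract the factor and its exact period directly from the triviality of the translation stabiliser of ${\rm PS}_p(x,U)$, which is where essentiality enters in your version. You are also right to flag $p=1$: as stated, $(2)$ fails trivially there since $\mathbb{P}(X,T)$ only contains integers $p\geq 2$, a point the paper passes over silently.
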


\begin{remark}
If these properties are satisfied, the set of  eigenvalues associated with $(X,T)$ 
that are roots of unity
is $\{ \exp(2i k\pi /p) : p\in \PP(X,T), k\in\ZZ\}$.

It is observed, and easy to prove, in \cite[Lemma II.2]{Dekking1978} that the partition of Property \eqref{item:3} is unique up to cyclic permutation.
\end{remark}

\begin{proof}
\eqref{item:1} $\Rightarrow$ \eqref{item:2}
Suppose that $\lambda = \exp(2i\pi / p)$ is an eigenvalue of $(X,T)$ and $f:X\to \CC$ a continuous eigenfunction associated to $\lambda$. 
Let $z\in X$.
Replacing $f$ by $f/f( z)$ if needed, one can suppose $f(z) = 1$.
Let us prove that $p$ is an essential period for the clopen set $U = f^{-1} (\{ 1 \})$.
By minimality ${\rm PS}_p (x , U) $ is non empty for all $x\in X$.
Suppose ${\rm PS}_p (x, U) = {\rm PS}_p (x,U) -q$ and let $n\in {\rm PS}_p (x, U)$.
Then, $\lambda^{n-q} f(x ) = \lambda^n f(x)$. Thus $\lambda^q = 1$ and $q$ divides $p$.

\eqref{item:2} $\Rightarrow$ \eqref{item:3}
There exists some $x\in X$ and a clopen set $U$ such that ${\rm PS}_p (x , U)$ is not empty.
From Lemma \ref{lemma:empty}, the set ${\rm PS}_p (y , U)$ is not empty for all $y\in X$.

Let $V$ be the closed subset  $\{y\in X : {\rm PS}_p (y , U) = {\rm PS}_p (x , U)\}$.
It suffices to show that $V, T^{-1}V, \dots , T^{-p+1} V$ is a partition of $X$.
Observe that $T^{-p} V = V$. 
Thus $\cup_{i=0}^{p-1} T^{-i} V$ is a non-empty closed $T$-invariant set and by minimality it is $X$.
Suppose there exists $y$ belonging to $V \cap T^{-i} V$ with $0\leq i<p$.
Then,  using Lemma  \ref{lemma:rmq}, ${\rm PS}_p (y , U) = {\rm PS}_p (T^i  y , U) = {\rm PS}_p ( y , U)-i$.
Moreover, $p$ being an essential period it should divide $i$.  Contradiction.

\eqref{item:3} $\Rightarrow$ \eqref{item:4}
Let $\{V , T^{-1} V , \cdots T^{-p+1}V\}$ be a partition of $X$. 
Define the map $\pi : X\rightarrow\{0,\dots , p-1\}$ such that $\pi(x)=i$ if 
$x\in T^{-i}V$.
Then, the system  $(\mathbb{Z}/p\mathbb{Z},R)$, where $R$ is the addition of $1$ modulo $p$, is  a $p$-periodic factor of $(X,T)$.

\eqref{item:4} $\Rightarrow$ \eqref{item:1}
We can suppose the periodic factor is $(\mathbb{Z}/p\mathbb{Z},R)$ where $R$ is the addition of $1$ modulo $p$.
Choose some factor map $f$ from $(X,T)$ onto $(\mathbb{Z}/p\mathbb{Z},R)$.
Then, it suffices to consider the map $\phi : X \to \mathbb{C}$ defined by $\phi (x) = \exp(2i\pi f (x)/p)$.
\end{proof}

\begin{coro}
\label{coro:automatic}
Let $(X',T')$ be a factor of $(X,T)$.
Then $\mathbb{P} (X',T') $ is included in $\mathbb{P} (X,T) $.
\end{coro}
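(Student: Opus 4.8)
The plan is to reduce everything to the eigenvalue characterization of $\PP$ provided by the equivalence \eqref{item:1} $\Leftrightarrow$ \eqref{item:2} in Proposition \ref{eigenvalues}, and then to exploit the elementary fact that eigenfunctions pull back along factor maps. First I would observe that a factor of a minimal system is again minimal, so that Proposition \ref{eigenvalues} applies to both $(X,T)$ and $(X',T')$.

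Let $\pi : X \to X'$ denote the factor map, so that $\pi$ is continuous and onto with $\pi \circ T = T' \circ \pi$. Take $p \in \PP(X',T')$. By the implication \eqref{item:2} $\Rightarrow$ \eqref{item:1} of Proposition \ref{eigenvalues} applied to $(X',T')$, the number $\lambda = \exp(2i\pi/p)$ is an eigenvalue of $(X',T')$; let $f : X' \to \CC$ be an associated continuous eigenfunction, so that $f \circ T' = \lambda f$ and $f \not\equiv 0$.

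Next I would check that $f \circ \pi$ is an eigenfunction of $(X,T)$ for the same $\lambda$. It is continuous as a composition of continuous maps, and the intertwining relation gives
\[
(f \circ \pi) \circ T = f \circ (\pi \circ T) = f \circ (T' \circ \pi) = (f \circ T') \circ \pi = \lambda (f \circ \pi).
\]
Since $\pi$ is onto and $f \not\equiv 0$, the function $f \circ \pi$ is not identically zero, hence is a genuine eigenfunction. Thus $\lambda$ is an eigenvalue of $(X,T)$, and the implication \eqref{item:1} $\Rightarrow$ \eqref{item:2} of Proposition \ref{eigenvalues} applied to $(X,T)$ yields $p \in \PP(X,T)$.

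The argument is entirely formal and I do not expect any substantial obstacle; the only points to keep straight are the minimality of the factor, which is needed in order to invoke Proposition \ref{eigenvalues} for $(X',T')$, and the non-vanishing of $f \circ \pi$, which follows from the surjectivity of $\pi$. One could equally argue through property \eqref{item:4}: a periodic factor of $(X',T')$ is, by composition of factor maps, a periodic factor of $(X,T)$ with the same essential period, which gives the inclusion directly.
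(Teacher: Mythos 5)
Your proof is correct and matches the paper's intent: the corollary is stated there without proof, as an immediate consequence of Proposition \ref{eigenvalues}, and your argument (pulling back an eigenfunction along the factor map, with the minimality of the factor and the surjectivity of $\pi$ correctly noted) is exactly the standard route, as is your alternative via property \eqref{item:4} by composing factor maps. Nothing to add.
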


Let us illustrate these notions and results in the framework of minimal subshifts.
Let $(X,S)$ be a minimal subshift defined on the alphabet $\A$ and $x=(x_n)_{n\in \mathbb{Z}}\in X$. 
It is clear that $x$ has a constant arithmetic subsequence $(x_{k+np})_{n\in \mathbb{Z}}$ if and only if $k$ belongs to ${\rm PS}_p (x , [a])$ for some $a\in \A$.
We set 
\begin{align*}
{\rm Per} (x) & = \{ p\geq 1 :  \exists a \in \mathcal{A}, {\rm PS}_p (x , [a]) \not = \emptyset \} \\
& = \{p\geq 1 :  \exists a \in \mathcal{A} , \exists k  \: \forall n ,   x_{k+np} = a \} .
 \end{align*}

Let $\mathbb{P}' (X,S) $ be the set 
\[
\{ p \in \mathbb{P} (X,S) : p \ \hbox{\rm   is an essential period for some clopen set } [a], a \in \mathcal{A}  \} .
\]
and $\ZZ \PP '(X,S)$ be the set of multiples of the elements of $\PP' (X,S )$.

\begin{lemma}
\label{lemma:casessential}
Let $(X,S)$ be a minimal subshift  and $x\in X$.
Then, one has
\begin{align}
\label{obs:essper}
{\rm Per} (x) = \ZZ \PP' ( X,S )  .
\end{align}
\end{lemma}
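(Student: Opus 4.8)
The plan is to prove the two inclusions of \eqref{obs:essper} separately. The inclusion $\ZZ\PP'(X,S) \subseteq {\rm Per}(x)$ is routine, while the reverse one requires extracting a genuine essential period from an arbitrary element of ${\rm Per}(x)$.

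For $\ZZ\PP'(X,S)\subseteq {\rm Per}(x)$: given $p=mp'$ with $p'\in\PP'(X,S)$ and $m\ge 1$, the definition of $\PP'$ yields a letter $a$ for which $p'$ is an essential period of $[a]$; in particular ${\rm PS}_{p'}(y,[a])\neq\emptyset$ for some $y$, hence for \emph{every} $y$, and in particular for $x$, by Lemma \ref{lemma:empty}. Choosing $k\in {\rm PS}_{p'}(x,[a])$ gives $x_{k+np'}=a$ for all $n$, whence $x_{k+np}=x_{k+(nm)p'}=a$ for all $n$, so $k\in {\rm PS}_p(x,[a])$ and $p\in{\rm Per}(x)$.

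For the reverse inclusion I would start from $p\in{\rm Per}(x)$, i.e.\ a letter $a$ with ${\rm PS}_p(x,[a])\neq\emptyset$, and locate the essential period hidden inside $p$. The key object is the stabilizer $G=\{q\in\ZZ:{\rm PS}_p(x,[a])={\rm PS}_p(x,[a])-q\}$, a subgroup of $\ZZ$ which contains $p$, since ${\rm PS}_p(x,[a])$ is $p$-periodic by construction; write $G=p'\ZZ$ with $p'\mid p$. I then claim ${\rm PS}_{p'}(x,[a])={\rm PS}_p(x,[a])$: the inclusion $\subseteq$ is immediate from $p'\mid p$, and for $\supseteq$ one uses that any $k\in{\rm PS}_p(x,[a])$ satisfies $k+jp'\in{\rm PS}_p(x,[a])$ for all $j$, by $p'$-invariance of ${\rm PS}_p(x,[a])$; evaluating the defining condition at $n=0$ gives $x_{k+jp'}=a$ for all $j$, i.e.\ $k\in {\rm PS}_{p'}(x,[a])$. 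Granting this, the two conditions defining an essential period of $x$ for $[a]$ hold for $p'$: condition (1) because ${\rm PS}_{p'}(x,[a])={\rm PS}_p(x,[a])\neq\emptyset$, and condition (2) because the set of admissible $q$ is exactly $G=p'\ZZ$. Hence $p'$ is an essential period for $[a]$, so (provided $p'\ge 2$) $p'\in\PP'(X,S)$ and $p=(p/p')\,p'\in\ZZ\PP'(X,S)$.

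The main obstacle is the identity ${\rm PS}_{p'}(x,[a])={\rm PS}_p(x,[a])$, which is exactly what makes $p'$ \emph{essential} rather than merely a divisor of $p$; the evaluation-at-$n=0$ trick above is the crux. A second point needing care is the degenerate value $p'=1$: it occurs precisely when ${\rm PS}_1(x,[a])\neq\emptyset$, forcing $x=\cdots aaa\cdots$ and, by minimality, $X=\{x\}$ a single periodic point. Since $\PP(X,S)$ is defined with $p\ge 2$, this trivial subshift must be excluded (or treated as a separate, immediate case); for every non-trivial minimal subshift the argument above yields $p'\ge 2$ and the desired equality.
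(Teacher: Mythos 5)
Your proof is correct and follows the only real route available, which is also the paper's: the paper's own proof simply asserts that both inclusions are clear from the definitions, and your stabilizer-subgroup argument (showing $G=\{q : {\rm PS}_p(x,[a])={\rm PS}_p(x,[a])-q\}=p'\ZZ$ with ${\rm PS}_{p'}(x,[a])={\rm PS}_p(x,[a])$, so that $p'$ is essential) is exactly the filling-in of the converse inclusion that the paper leaves unstated. Your caveat about $p'=1$ is moreover a genuine observation the paper glosses over: for the one-point subshift one has ${\rm Per}(x)=\{p\in\NN : p\geq 1\}$ while $\PP'(X,S)=\emptyset$ (since $\PP(X,S)$ requires $p\geq 2$), so the stated equality implicitly excludes that trivial case, exactly as you note.
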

\begin{proof}
It is clear that $\mathbb{P}' (X,S)$ is included in ${\rm Per} (x)$.
Hence $\ZZ \PP '(X,S)$ is also included in ${\rm Per} (x)$.
The converse inclusion is clear from the definitions.
\end{proof}

To answer Question 2, it is enough to answer positively Question 3. For this purpose, we should algorithmically determine the set $\PP' ( X,S ) $.
This, together with Proposition \ref{eigenvalues}, provides a strategy to find the constant arithmetic subsequences: one has to find the essential periods, and thus the eigenvalues
that are root of unity,
and then to check whether this provides such a sequence.

Let us enlight what is above considering some well-known families of minimal subshifts.

\subsection*{Toeplitz subshifts} \label{subsect:Toeplitz}
Let $x=(x_n)_{n\in \ZZ} \in \A^\ZZ$.
We say that $x$ is a \emph{Toeplitz sequence} if for all $k\in \ZZ$ there exists $p>1$ such that $x_{k+np} = x_k$ for all $n\in\ZZ$.
That is, for every $k$, we can find a constant arithmetic subsequence $(x_{k+np})_{n\in\ZZ}$.
Observe that this does not force $x$ to be periodic as $p$ is depending on $k$.

Some of the Toeplitz sequences are substitutive \cite[sec.\ 3]{CassaigneKarhumaki}. For example, consider the substitution
$\sigma$ defined by $\sigma (0) = 1000$ and $\sigma (1) =1010$.
It has a fixed point $\sigma^\omega(0.1) = 
\cdots 0010001000.10101000101010\cdots$.
Every index belongs to a constant arithmetic progression with common difference $2^p$ for some integer $p\in\NN$. 
Thus this fixed point is a Toeplitz sequence.

The following proposition makes Proposition \ref{eigenvalues} more precise in the context of Toeplitz sequences.
It asserts that eigenvalues are not only characterized by essential periods for clopen sets but for cylinder sets.

\begin{prop}\cite{Williams}
Let $x$ be a Toeplitz sequence and $(X,S)$ the subshift it generates.
The following properties are equivalent:
\begin{enumerate}
\item
$\lambda = \exp(2i\pi / p)$ is an eigenvalue of $(X,S)$;
\item
there exists an essential period $q$ for $[a]$, for some letter $a$, such that $p$ divides $q$.
\end{enumerate}
\end{prop}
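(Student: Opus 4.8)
The plan is to prove the two implications separately, the implication $(2)\Rightarrow(1)$ being the easy one. Assume $q$ is an essential period for $[a]$ with $p \mid q$. Then $q \in \PP'(X,S) \subseteq \PP(X,S)$, so Proposition \ref{eigenvalues} gives that $\exp(2i\pi/q)$ is an eigenvalue of $(X,S)$. Since the roots of unity among the eigenvalues form the group $\{\exp(2ik\pi/q) : q \in \PP(X,S),\ k \in \ZZ\}$ (the Remark following Proposition \ref{eigenvalues}) and $p\mid q$ writes $\exp(2i\pi/p) = \exp(2i\pi(q/p)/q)$, we conclude that $\exp(2i\pi/p)$ is an eigenvalue. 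Concretely, if $g\circ S = \exp(2i\pi/q)\,g$, then $g^{q/p}$ is a continuous eigenfunction for $\exp(2i\pi/p)$.

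For the converse $(1)\Rightarrow(2)$, I would first extract combinatorial data from the eigenfunction. Let $f : X \to \CC$ be a continuous eigenfunction for $\lambda = \exp(2i\pi/p)$, normalized so that $f(x)=1$. By minimality $|f|$ is constant and $f$ takes values in the finite set $\{\lambda^{j} : 0\le j<p\}$; putting $U=f^{-1}(\{1\})$, the sets $S^{-j}U=f^{-1}(\{\lambda^{-j}\})$ form a clopen partition of $X$ and $f(S^{j}x)=\lambda^{j}$ for all $j$. Being clopen, $U$ depends only on coordinates in a window $[-N,N]$, so the associated factor map $\pi:X\to \ZZ/p\ZZ$ (as in the proof of Proposition \ref{eigenvalues}, with $\pi\circ S=R\circ\pi$, $R$ addition of $1$) is a sliding block code of radius $N$. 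The key consequence is covariance along the orbit of $x$: the central window $x_{[j-N,j+N]}$ determines $\pi(S^{j}x)$, hence any two occurrences of a fixed central word lie in the same residue class modulo $p$.

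Next I would feed in the Toeplitz structure. Every coordinate of the finite window $[-N,N]$ lies in a constant arithmetic subsequence; letting $q$ be the least common multiple of the corresponding common differences, one gets $x_{k+nq}=x_{k}$ for all $k\in[-N,N]$ and all $n$. Thus the central window of $S^{nq}x$ equals that of $x$, so $\pi(S^{nq}x)=\pi(x)$, while covariance gives $\pi(S^{nq}x)=\pi(x)+nq$ in $\ZZ/p\ZZ$; therefore $p\mid q$. Writing $w=x_{[-N,N]}$ and letting $C$ be the cylinder it determines, the same computation shows $0\in {\rm PS}_{q}(x,C)$, while the set $\{j:S^{j}x\in C\}$ of occurrences of $w$ is contained in $p\ZZ$. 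Taking $q'$ to be the minimal period of the $q$-periodic set ${\rm PS}_{q}(x,C)$ yields an essential period $q'$ of the cylinder $C$ with $q'\ZZ\subseteq {\rm PS}_{q'}(x,C)\subseteq p\ZZ$, whence $p\mid q'$. This already establishes the cylinder-set refinement of Proposition \ref{eigenvalues}; what remains is to descend from the word $w$ to a single letter.

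The hard part will be exactly this descent to a single-letter cylinder $[a]$, $a\in\A$. The natural route is through the periodic structure of the Toeplitz sequence: for a period $r$ set $A_{r}=\{k:x_{k+nr}=x_{k}\ \forall n\}$; on $A_{r}$ the colouring $k\mapsto x_{k}$ is $r$-periodic, so $A_{r}=\bigsqcup_{a\in\A}{\rm PS}_{r}(x,[a])$ with each class $r$-periodic. Choosing $r$ to be an essential period of $A_{r}$ that is a multiple of $p$ (such $r$ occur cofinally and may be taken $\ge q$), one checks that the minimal period of each ${\rm PS}_{r}(x,[a])$ is an essential period for $[a]$, and that the least common multiple of these minimal periods equals the minimal period $r$ of $A_{r}$, hence is divisible by $p$. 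The genuinely delicate point is then to extract a \emph{single} letter $a$ whose skeleton ${\rm PS}_{r}(x,[a])$ has minimal period divisible by all of $p$, rather than by various prime-power parts of $p$ scattered over different letters; I expect this extraction, where the specific combinatorics of Toeplitz sequences is used and which is the substance of Williams' argument in \cite{Williams}, to be the main obstacle.
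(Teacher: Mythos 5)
You should first note that the paper itself offers no proof of this proposition: it is imported verbatim from Williams \cite{Williams}, so there is no internal argument to compare yours against, and your attempt must be judged on its own. Judged that way, what you actually prove is correct but falls short of the statement. The direction (2)$\Rightarrow$(1) via Proposition \ref{eigenvalues} and the power $g^{q/p}$ of an eigenfunction is complete. Your (1)$\Rightarrow$(2) argument is a clean proof of a \emph{word-cylinder} version: the eigenfunction takes values in the $p$-th roots of unity, its level sets are clopen hence determined by a window $[-N,N]$, so all occurrences of the central word $w$ are congruent modulo $p$; the Toeplitz property then gives $q$ with $0\in {\rm PS}_q(x,C)$ and $p\mid q$ for the cylinder $C=[w]$, and your passage to the minimal translation-period $q'$ of ${\rm PS}_q(x,C)$ is sound (the implicit identity ${\rm PS}_{q'}(x,C)={\rm PS}_q(x,C)$, needed to see that $q'$ is an essential period for $C$, does check out, and $q'\ZZ\subseteq {\rm PS}_{q'}(x,C)\subseteq p\ZZ$ gives $p\mid q'$). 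This is a genuine strengthening of Proposition \ref{eigenvalues} in the Toeplitz setting.

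The gap is exactly where you flag it, and it is not a cosmetic one: the proposition asserts the essential period can be found for a \emph{single-letter} cylinder $[a]$, and this letter version is the whole point of the statement in this paper — it is what feeds $\PP'(X,S)$, Lemma \ref{lemma:casessential}, and the algorithms of Section \ref{sec:constantlength}. Your sketched descent does not close it, and one of its intermediate claims is unjustified and false as stated: for $A_r=\bigsqcup_{a}{\rm PS}_r(x,[a])$, a translation preserving the union $A_r$ need not preserve each letter class ${\rm PS}_r(x,[a])$ (it may send $r$-periodic positions carrying $a$ to $r$-periodic positions carrying $b$), so only the divisibility ``minimal period of $A_r$ divides the lcm of the letters' minimal periods'' is automatic, not the equality you assert. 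Likewise the assertion that essential periods of $A_r$ divisible by $p$ ``occur cofinally'' is essentially the whole-sequence form of the theorem and is not derived from what precedes. Finally, as you observe yourself, even a correct lcm statement would not suffice: $p$ could be split among distinct letters (say periods $2$ and $3$ for $p=6$), whereas statement (2) demands one letter $a$ with $p\mid q$. That extraction of a single letter is the Toeplitz-specific substance of Williams' theorem, and it is missing from the proposal.
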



\subsection*{Sturmian subshifts}

Given a sequence $x\in \A^\ZZ$, we define its complexity function as
$p_x(n) = \card\{x_{[i;i+n-1]} ; i\in\ZZ\}$, which gives the number of factors of length $n$ that occur in $x$.
The Morse-Hedlund theorem \cite{MorseHedlund1938} asserts that $x$ is non-periodic if and only if it satisfies $p_x(n)\geq n+1$ for all $n\in\NN$. 
We say that $x$ is \emph{sturmian} whenever $x$ is uniformly recurrent and $p_x(n) = n+1$ for all $n\in\NN$. 
A subshift is called \emph{sturmian} if it is of the form $(\overline{\O(x)},S)$, where $x$ is a sturmian sequence.

\begin{prop}
Let $x$ be a sturmian sequence. Then it admits no constant arithmetic subsequence.
\end{prop}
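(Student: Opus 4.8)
The plan is to route the statement through the eigenvalue characterisation established in this section rather than to argue combinatorially. Write $(X,S)$ for the subshift generated by the Sturmian sequence $x$. Since $x$ is uniformly recurrent, $(X,S)$ is minimal, and since $p_x(n)=n+1$ the Morse--Hedlund theorem tells us $x$ is non-periodic; in particular $x$ is not a constant sequence. By Lemma \ref{lemma:casessential} one has ${\rm Per}(x)=\ZZ\PP'(X,S)$, and since $\PP'(X,S)\subseteq\PP(X,S)$ the set ${\rm Per}(x)$ is empty as soon as $\PP(X,S)=\emptyset$. It therefore suffices to prove that $\PP(X,S)=\emptyset$, which will rule out every admissible common difference at once.

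Next I would invoke Proposition \ref{eigenvalues}: for $p\ge 2$ one has $p\in\PP(X,S)$ if and only if $\lambda=\exp(2i\pi/p)$ is an eigenvalue of $(X,S)$. Thus $\PP(X,S)=\emptyset$ is equivalent to the assertion that $(X,S)$ has no eigenvalue that is a nontrivial root of unity, i.e.\ no eigenvalue of the form $\exp(2i\pi/p)$ with $p\ge 2$. The whole statement is then reduced to a spectral fact about Sturmian subshifts.

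The key input is the classical description of the additive eigenvalues of a Sturmian subshift recalled above (see \cite{Kurka}): such a subshift of slope $\alpha$ is topologically conjugate to the irrational rotation $R_\alpha$ on the circle, and its group of eigenvalues is exactly $\{\exp(2i\pi k\alpha):k\in\ZZ\}$. Because $\alpha$ is irrational, $\exp(2i\pi k\alpha)$ is a root of unity only for $k=0$, where it equals $1$. Hence $(X,S)$ has no nontrivial rational eigenvalue, so $\PP(X,S)=\emptyset$, and by the reduction above ${\rm Per}(x)=\emptyset$; that is, $x$ admits no constant arithmetic subsequence.

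The only genuinely substantial step is the last one, namely that a Sturmian system carries only irrational eigenvalues, and I am taking this as the recalled result of \cite{Kurka}. If instead one wanted a self-contained argument, the work would lie in exploiting the rotation coding directly: a constant subsequence $x_{k+np}=a$ forces the orbit $\{R_\alpha^{\,k+np}(t)\}_{n\in\ZZ}$ to remain inside the coding interval of the letter $a$, which is impossible since $p\alpha$ is again irrational and this orbit is therefore dense in the circle. The main obstacle along that route is justifying the passage from the purely combinatorial definition (complexity $n+1$) to the rotation model, which is exactly the content of the classical Sturmian theory.
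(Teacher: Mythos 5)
Your proof is correct and follows essentially the same route as the paper: both reduce the statement via Proposition \ref{eigenvalues} (you additionally make the link to constant arithmetic subsequences explicit through Lemma \ref{lemma:casessential}) to the fact recalled from \cite{Kurka} that the eigenvalue group of a Sturmian subshift is $\{\exp(2i\pi k\alpha) : k\in\ZZ\}$ for some irrational $\alpha$, hence contains no nontrivial root of unity. One cosmetic slip in your aside: a Sturmian subshift is an almost one-to-one extension of the rotation $R_\alpha$, not topologically conjugate to it (a Cantor system cannot be conjugate to a circle rotation), but since the only input your argument actually uses is the eigenvalue group itself, this does not affect the proof.
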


\begin{proof}
It is well-known that the set of eigenvalues of Sturmian subshifts is $\{ \exp{(2i\pi n \alpha )} : n\in \ZZ \}$ for some $\alpha \not \in \mathbb{Q}$ \cite[Section 4.5.3]{Kurka}.
We conclude using Proposition \ref{eigenvalues}.
\end{proof}

\subsection*{Substitutive subshifts}
In this section we recall some results concerning eigenvalues associated to primitive substitutions that will be exploited in the next section in order to solve Question 1 of the introduction.
A complete algebraic characterization of the eigenvalues of minimal substitution subshifts can be found in \cite{Ferenczi&Mauduit&Nogueira:1996}. 

We fix an integer $d\in\NN$ and consider the alphabet $\A = \{0,1,\ldots, d-1\}$.

We begin with two preliminary results where $\ones\in \ZZ^d$ denotes the vector $(1,1,\dots,1)$.

\begin{lemma}\label{exposantmatrice}
Let $M$ be a square $d\times d$-matrix and $p$ be a prime integer. 
The following properties are equivalent:
\begin{enumerate}
\item
\label{exposantmatrice:1}
there exists $m\in\NN$ such that $\ones M^m\in p\ZZ^d$ ;
\item
\label{exposantmatrice:2}
$\ones M^d\in p\ZZ^d$.
\end{enumerate}
\end{lemma}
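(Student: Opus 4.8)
The plan is to reduce everything modulo $p$ and then argue with elementary linear algebra over the field $\ZZ/p\ZZ$, the primality of $p$ being exactly what guarantees this quotient is a field. The implication \eqref{exposantmatrice:2} $\Rightarrow$ \eqref{exposantmatrice:1} is immediate (take $m = d$). For the converse I would let $\bar M$ denote the reduction of $M$ modulo $p$ and consider the row vectors $v_i = \ones \bar M^{\, i}$ in the $d$-dimensional $\ZZ/p\ZZ$-vector space $(\ZZ/p\ZZ)^d$. The hypothesis $\ones M^m \in p\ZZ^d$ says precisely $v_m = 0$, and the desired conclusion $\ones M^d \in p\ZZ^d$ says precisely $v_d = 0$, so the whole lemma becomes a statement about when this ``Krylov'' sequence $(v_i)_{i\ge 0}$ vanishes.

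The first observation is that once the sequence hits $0$ it stays there, since $v_{i+1} = v_i \bar M$. Hence, letting $m_0$ be the least index with $v_{m_0} = 0$ (which exists by hypothesis), it suffices to prove $m_0 \le d$: from this, $v_d = v_{m_0}\bar M^{\, d - m_0} = 0$ and we are done.

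To bound $m_0$ I would show that the vectors $v_0, v_1, \dots, v_{m_0 - 1}$, all nonzero by minimality of $m_0$, are linearly independent over $\ZZ/p\ZZ$; since the ambient space has dimension $d$, independence forces $m_0 \le d$. For the independence, suppose $\sum_{i=0}^{m_0-1} c_i v_i = 0$ is a nontrivial relation and let $j$ be the smallest index with $c_j \neq 0$. Right-multiplying by $\bar M^{\, m_0 - 1 - j}$ sends each $v_i$ to $v_{i + m_0 - 1 - j}$, and every term with $i > j$ then has index at least $m_0$ and hence vanishes; what survives is $c_j v_{m_0 - 1} = 0$. As $v_{m_0-1}\neq 0$ and $c_j\neq 0$ is invertible in the field $\ZZ/p\ZZ$, this is a contradiction.

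The only genuinely delicate point is this last shift-and-kill step, and it is exactly where primality enters: over $\ZZ/p\ZZ$ with $p$ composite one would merely obtain $c_j v_{m_0-1}=0$ with $c_j$ a possible zero divisor, so both the argument and the statement can break. Equivalently, and perhaps more conceptually, one may package the proof by noting that $\ones$ generates a cyclic $\ZZ/p\ZZ[\bar M]$-module whose annihilator is generated by a divisor of the minimal polynomial of $\bar M$, hence of degree at most $d$; the hypothesis says $x^m$ lies in this annihilator, which forces the generator to be $x^e$ with $e\le d$, and therefore $v_d=0$.
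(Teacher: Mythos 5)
Your proof is correct, and it takes a genuinely different route from the paper's. The paper never leaves $\ZZ$: it takes $m$ minimal with $\ones M^m\in p\ZZ^d$, assumes $m>d$, applies Cayley--Hamilton to write $\ones M^d=a_0\ones+a_1\ones M+\cdots+a_{d-1}\ones M^{d-1}$ with integer coefficients, and right-multiplies this identity by $M^{m-1},M^{m-2},\dots,M^{m-d}$ in turn; at each stage every term of index at least $m$ lies in $p\ZZ^d$, so primality of $p$ (Euclid's lemma applied coordinatewise to $a_i\ones M^{m-1}\in p\ZZ^d$ with $\ones M^{m-1}\notin p\ZZ^d$) forces $p\mid a_i$, and hence $\ones M^d\in p\ZZ^d$. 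You instead reduce modulo $p$ at the outset and work in the vector space $(\ZZ/p\ZZ)^d$: primality enters only through $\ZZ/p\ZZ$ being a field, and Cayley--Hamilton is replaced by a pure dimension count showing the Krylov vectors $v_0,\dots,v_{m_0-1}$ below the first vanishing index are independent, so $m_0\le d$. Amusingly, both proofs use the same ``shift-and-kill'' mechanics --- right-multiplying a relation by a power of $M$ so that all terms but one fall beyond the vanishing threshold --- but deploy it on different objects (the Cayley--Hamilton identity over $\ZZ$ versus an arbitrary dependence relation over the field). Your version is more economical, yields the sharper intermediate fact that the least vanishing index is at most $d$, and your closing remark correctly locates where primality is essential: already $d=1$, $M=(2)$, $p=4$ satisfies property \eqref{exposantmatrice:1} but not \eqref{exposantmatrice:2}. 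What the paper's integer formulation buys is reusability: tracking divisibility of the characteristic polynomial coefficients by $p$ is exactly the template reused in Lemmas \ref{matricepolycar1} and \ref{matricepolycar2}, where one needs divisibility by powers $p^n$ and the mod-$p$ field picture alone no longer suffices.
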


\begin{proof}
We will prove that \eqref{exposantmatrice:1} $\Rightarrow$ \eqref{exposantmatrice:2}, the converse is obvious.

If $m$ is lower or equal to $d$ then the conclusion is clear.
Suppose $m$ greater than $d$.
One can suppose $m = \min\{i\in\NN : \ones M^i\in p\ZZ^d\}$.
Applying Cayley-Hamilton theorem to $M$ ensures that there exist some integers $a_0, a_1, \cdots, a_{ d-1}$ such that 
$\ones M^d = a_0 \ones + a_1 \ones M + \cdots + a_{d-1} \ones M^{d-1}$.
Then $\ones M^{m+d-1} = a_0\ones M^{m-1} + a_1\ones M^m + \cdots + a_{d-1}\ones M^{m-1+d-1}$, where every term $\ones M^i$ except $\ones M^{m-1}$ belongs to $p\ZZ^d$. It follows that 
$a_0\ones M^{m-1} $ belongs to $p\ZZ^d$. 
As $p$ is prime and $\ones M^{m-1}\not\in p\ZZ^d$, $p$ divides $a_0$.
With the same method and multiplying successively $\ones M^d$ by $M^{m-2}, M^{m-3},\ldots , M^{m-d}$, we establish that $p$ divides $a_1, a_2, \ldots , a_{d-1}$. As a consequence, $p$ divides $\ones M^d$.
\end{proof}

\begin{lemma}\label{vpmatrice}
Let $(X ,S)$ be a non-periodic subshift generated by a left-proper primitive  substitution $\sigma : \A^* \to\A^*$ with incidence matrix $M$ and let $p$ be an integer. The following properties are equivalent:
\begin{enumerate}
\item
\label{vpmatrice:1}
$\exp(2i\pi/p)$ is an eigenvalue of $(X , S)$ ;
\item
\label{vpmatrice:2}
there exists $m\in\NN$ such that $p$ divides $|\sigma^m(a)|$ for all $a\in\A$ ;
\end{enumerate}
Moreover, when $p$ is a prime number, this is equivalent to:
\begin{enumerate}
\setcounter{enumi}{2}
\item
\label{vpmatrice:3}
$\ones M^d = (|\sigma^d (a)|)_{a\in \A}$ belongs to $p\ZZ^d$, where $d = |\A|$.
\end{enumerate}
\end{lemma}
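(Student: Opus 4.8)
The plan is to establish the cycle $(1)\Rightarrow(2)\Rightarrow(1)$ for general $p$, and then to close the loop with $(3)$ for prime $p$ using Lemma \ref{exposantmatrice}. The guiding idea is that the essential period structure captured by Proposition \ref{eigenvalues} must, for a substitution subshift, be compatible with the self-similar combinatorial structure coming from $\sigma$: the lengths $|\sigma^m(a)|$ are exactly the entries of the row vector $\ones M^m$, so divisibility of all these lengths by $p$ is precisely the condition $\ones M^m \in p\ZZ^d$.

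First I would prove $(2)\Rightarrow(1)$, which is the more geometric direction. Suppose $p$ divides $|\sigma^m(a)|$ for every $a\in\A$, for some fixed $m$. Since $\sigma$ is left-proper, every word $\sigma^m(a)$ begins with a common prefix letter; more importantly, the decomposition of the fixed point $x$ into the blocks $\sigma^m(x_0)\sigma^m(x_1)\cdots$ partitions $\ZZ$ into consecutive intervals, all of whose lengths are divisible by $p$. This allows me to define a map $\pi:X\to\ZZ/p\ZZ$ recording the position of each index modulo $p$ within its block (the left-properness guarantees the block boundaries are intrinsically recognizable up to the usual recognizability of primitive substitutions, so $\pi$ descends to a continuous factor map commuting with the shift and the addition of $1$). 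This exhibits $(\ZZ/p\ZZ,R)$ as a $p$-periodic factor, so by Proposition \ref{eigenvalues}, part \eqref{item:4}$\Rightarrow$\eqref{item:1}, $\exp(2i\pi/p)$ is an eigenvalue. The technical care here is ensuring $\pi$ is well-defined and continuous, which is where recognizability (Mosse's theorem for aperiodic primitive substitutions) enters.

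For $(1)\Rightarrow(2)$, assume $\exp(2i\pi/p)$ is an eigenvalue with eigenfunction $f$. By Proposition \ref{eigenvalues} there is a clopen partition $\{V,T^{-1}V,\dots,T^{-p+1}V\}$ of $X$, equivalently a factor $\pi:X\to\ZZ/p\ZZ$. The eigenfunction is constant on cylinders of some fixed length, so $\pi$ factors through a sliding-block code of some radius $r$. I would then apply $\sigma$ and use that a long iterate $\sigma^m$ maps any letter to a word of length $\geq 2r+1$ once $m$ is large (growth of $\langle\sigma^m\rangle$); comparing $\pi$-values at the left endpoints of consecutive $\sigma^m$-blocks forces the block length $|\sigma^m(a)|$ to be $\equiv 0 \pmod p$, uniformly in $a$, by the primitivity that makes all letters appear as block-initial symbols. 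This yields $(2)$.

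Finally, for prime $p$, the equivalence $(2)\Leftrightarrow(3)$ is immediate from Lemma \ref{exposantmatrice} applied to $M=M_\sigma$, since $\ones M^m=(|\sigma^m(a)|)_{a\in\A}$, so the existence of some $m$ with $\ones M^m\in p\ZZ^d$ is equivalent to $\ones M^d\in p\ZZ^d$. The main obstacle I anticipate is the $(1)\Rightarrow(2)$ direction: making rigorous the passage from an eigenfunction, which a priori only gives a partition into closed sets with a congruence structure, to a uniform length-divisibility statement. The delicate point is controlling how the factor map interacts with the substitution's block decomposition, which requires recognizability and the uniform block growth, and is why the hypotheses of non-periodicity, primitivity and left-properness are all used.
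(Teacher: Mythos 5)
Your equivalence (2) $\Leftrightarrow$ (3) is exactly the paper's own argument (the identity $\ones M^m=(|\sigma^m(a)|)_{a\in\A}$ combined with Lemma \ref{exposantmatrice}), and your direction (2) $\Rightarrow$ (1) is sound: once $p$ divides every $|\sigma^m(a)|$, recognizability (Moss\'e's theorem, available since $\sigma$ is primitive and $(X,S)$ is non-periodic) makes the ``position modulo $p$ inside the $\sigma^m$-block'' map a continuous factor onto $(\ZZ/p\ZZ,R)$, and Proposition \ref{eigenvalues} concludes. Note that the paper does not reprove (1) $\Leftrightarrow$ (2) at all; it cites the proof of Lemma 27 of Durand's 2000 article, so attempting a direct argument is legitimate.

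The gap is in your (1) $\Rightarrow$ (2), at the step ``comparing $\pi$-values at the left endpoints of consecutive $\sigma^m$-blocks forces $|\sigma^m(a)|\equiv 0 \pmod{p}$''. This presupposes that $\pi$ takes the same value at all block starts, and nothing you invoke yields that: a radius-$r$ window centred at a block start contains the last $r$ letters of the \emph{preceding} block, which vary, and (absent left-properness) the prefixes of the blocks vary too. The ingredients you list --- primitivity, recognizability, $\langle\sigma^m\rangle\geq 2r+1$ --- provably cannot suffice, because the paper's Example 4 satisfies all of them while violating the conclusion: $\sigma\colon 0\mapsto 01$, $1\mapsto 20$, $2\mapsto 13$, $3\mapsto 12$ is primitive and aperiodic with height $3$, so $\exp(2i\pi/3)$ is an eigenvalue (Theorem \ref{theo:dekking}), yet $|\sigma^m(a)|=2^m$ is never divisible by $3$; there the values at consecutive block starts genuinely shift by $2^m \bmod 3$. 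So left-properness --- which you deploy only in the other direction --- must enter precisely here, and your appeal to recognizability does not address the problem (recognizability recovers the block decomposition, it does not equalize values at block starts). The repair is short. Let $f$ be the eigenfunction; $f^p$ is continuous and $S$-invariant, hence constant by minimality, so $f$ has finite range and, being continuous, is determined by the window $y_{[-r,r]}$ for some $r$. Since $\sigma(\A)\subseteq c\A^*$, every $\sigma^m$-block of the fixed point $x$ begins with $\sigma^{m-1}(c)$, whose length tends to infinity; choose $m$ with $|\sigma^{m-1}(c)|\geq 2r+1$. Then for every block start $t$ the word $x_{[t,t+2r]}$ is one and the same prefix of $\sigma^{m-1}(c)$, so $f(S^{t+r}x)=\lambda^{t+r}f(x)$ takes a single value over all block starts $t$; for consecutive starts $t$ and $t'=t+|\sigma^m(x_i)|$ this gives $\lambda^{|\sigma^m(x_i)|}=1$, i.e.\ $p$ divides $|\sigma^m(a)|$ for every letter $a$ (all letters occur in $x$ by primitivity). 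With this observation inserted, your proof closes correctly.
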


\begin{proof}
The equivalence between  \eqref{vpmatrice:1} and  \eqref{vpmatrice:2} follows from the proof of Lemma 27 in Durand's article \cite{Durand:2000}. 
The equivalence between \eqref{vpmatrice:2} and \eqref{vpmatrice:3} comes from the fact that $M^m$ is the incidence matrix of $\sigma^m$, for every $m\in\NN$. Therefore, $\ones M^m\in p\ZZ^d$ is the vector $(|\sigma^m(a)|)_{a\in \A}$. 
We conclude using Lemma \ref{exposantmatrice}.
\end{proof}

The previous lemma is stated for left-proper substitutions, but it is clear that it also holds for right-proper substitutions.

Given a minimal subshift $(X,S)$ and $p \in \mathbb P (X,S)$ two situations can occur, $\sup \{ n : p^n \in  \mathbb P (X,S) \}$ is bounded or not. 
We will need this information to describe precisely $\mathbb P (X,S)$.

To this end, we denote $\mathbb{P}\mathbb{P} (X,S) $ the set of prime numbers belonging to $\mathbb{P} (X,S)$ and
 $\PP \PP^\infty (X,S)$ the set of $p\in \PP \PP (X,S)$ such that $p^n$ belongs to $\mathbb{P} (X,S) $ for all $n$.

Lemmas \ref{vpmatrice} and \ref{matricepolycar1} respectively determine algorithmically $\mathbb{P}\mathbb{P} (X,S) $ and 
 $\PP \PP^\infty (X,S)$. 
 For $p\in \mathbb{P} \mathbb{P} (X,S) \setminus \PP \PP^\infty (X,S)$, 
Lemmas   \ref{matricepolycar2} and \ref{bornematrice} provide an algorithm to compute the integer $B(p)$ such that $p^{B(p)}$ belongs to $\mathbb{P} (X,S)$ but $p^{B(p)+1}$ does not.   
We will need this result for our decidability problem in the next section.

\begin{lemma}\label{matricepolycar1}\cite[Lemma 29]{Durand:2000}
Let $M$ be a $d\times d$ matrix and $p$ a prime number. The following properties are equivalent:
\begin{enumerate}
\item $\forall n\in \NN, \exists k\in \NN : \ones M^k \in p^n \ZZ^d$ ;
\item $p$ divides $\gcd(a_0,\dots,a_r)$, 
where 
\[r = \max\{i\in \NN : \{\ones, \ones M,\dots, \ones M^i\}\text{ is free}\}
\]
and $Q(X) = \sum_{i=0}^{r+1}a_i X^i\in \ZZ[X]$ is the characteristic polynomial of the restriction of $M$ to the vector subspace spanned by $\ones$, $\ones M$, $\dots$, $\ones M^r$.
\end{enumerate}
\end{lemma}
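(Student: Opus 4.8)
The plan is to read both conditions off the minimal polynomial of $\ones$ under the right action of $M$. By the very definition of $r$, the vectors $\ones, \ones M,\dots,\ones M^r$ form a basis of the cyclic subspace $V$ they span, $V$ is $M$-invariant, and $M_{|V}$ is the companion matrix of $Q$ in that basis; hence $Q$ is simultaneously the minimal and the characteristic polynomial of $M_{|V}$, and $\ones\,Q(M)=0$. Since $V$ is $M$-invariant, $Q$ divides the characteristic polynomial of $M$, which is monic with integer coefficients, so Gauss's lemma gives $Q\in\ZZ[X]$, as claimed. Multiplying the relation $\ones\,Q(M)=0$ by $M^k$ yields the linear recurrence
\[
\ones M^{k+r+1} = -\sum_{i=0}^{r} a_i\,\ones M^{k+i},\qquad k\ge 0 .
\]
I would then record the algebraic reformulation of property $(2)$: the condition $p\mid\gcd(a_0,\dots,a_r)$ is exactly $Q\equiv X^{r+1}\pmod p$, i.e.\ (as $Q$ is monic with integer coefficients) that every root of $Q$ in $\overline{\mathbb Q_p}$ has positive $p$-adic valuation.

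For $(2)\Rightarrow(1)$ I would argue as in the Cayley--Hamilton manipulation of Lemma \ref{exposantmatrice}. Writing $a_i=p\,b_i$ with $b_i\in\ZZ$, the recurrence becomes $\ones M^{k+r+1}=-p\sum_{i=0}^{r}b_i\,\ones M^{k+i}$. A straightforward induction on $m$ then shows $\ones M^{k}\in p^{m}\ZZ^d$ for every $k\ge m(r+1)$: for $k\ge(m+1)(r+1)$ the recurrence expresses $\ones M^{k}$ as $p$ times a $\ZZ$-combination of vectors $\ones M^{j}$ with $j\ge m(r+1)$, each lying in $p^{m}\ZZ^d$ by the induction hypothesis. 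In particular $\ones M^{n(r+1)}\in p^{n}\ZZ^d$ for every $n$, which is $(1)$.

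The substantial direction is $(1)\Rightarrow(2)$, which I would prove by contraposition over $\mathbb Q_p$. Assume $Q\not\equiv X^{r+1}\pmod p$, so $Q$ has a root of valuation $\le 0$. Grouping the roots of $Q$ by the sign of their valuation (a Galois-stable partition, the valuation on $\overline{\mathbb Q_p}$ being unique) factors $Q=Q_{\mathrm{nil}}Q_{\mathrm{unit}}$ in $\mathbb Q_p[X]$, where $Q_{\mathrm{nil}}$ collects the roots of positive valuation and $Q_{\mathrm{unit}}\ne 1$ those of valuation $\le 0$. This gives an $M$-invariant splitting $V=V_{\mathrm{nil}}\oplus V_{\mathrm{unit}}$ and a decomposition $\ones=\ones_{\mathrm{nil}}+\ones_{\mathrm{unit}}$; since $Q$ is the minimal polynomial of $\ones$, the component $\ones_{\mathrm{unit}}$ is nonzero. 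On $V_{\mathrm{nil}}$ the operator $M$ is topologically nilpotent, so $\|\ones_{\mathrm{nil}}M^k\|_p\to 0$. On $V_{\mathrm{unit}}$ the restriction $N:=M_{|V_{\mathrm{unit}}}$ is invertible and $N^{-1}$ has characteristic polynomial in $\ZZ_p[X]$ (its roots, the inverses of the roots of $Q_{\mathrm{unit}}$, have nonnegative valuation); hence $N^{-1}$ is integral over $\ZZ_p$ and $\sup_k\|N^{-k}\|_{\mathrm{op}}=:C<\infty$. From $\ones_{\mathrm{unit}}=(\ones_{\mathrm{unit}}M^k)N^{-k}$ one gets $\|\ones_{\mathrm{unit}}M^k\|_p\ge\|\ones_{\mathrm{unit}}\|_p/C>0$ for all $k$. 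By the ultrametric inequality, once $k$ is large enough that $\|\ones_{\mathrm{nil}}M^k\|_p<\|\ones_{\mathrm{unit}}\|_p/C$, one has $\|\ones M^k\|_p=\|\ones_{\mathrm{unit}}M^k\|_p\ge\|\ones_{\mathrm{unit}}\|_p/C$. Thus $\inf_k\|\ones M^k\|_p>0$ (the remaining finitely many $k$ have finite valuation), so no $p^n$ with $p^{-n}$ below this infimum divides any $\ones M^k$, i.e.\ $(1)$ fails.

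The main obstacle is precisely this last direction: over a non-archimedean field one must rule out that destructive cancellation among the ``unit'' eigendirections could still drive $\ones M^k$ arbitrarily close to $0$. The decisive points are that the inverse of the unit part is $p$-adically integral, which keeps $\{\ones_{\mathrm{unit}}M^k\}_k$ bounded away from $0$, and that the strong triangle inequality then prevents the vanishing nilpotent part from cancelling it. Everything else is bookkeeping around the single algebraic equivalence $p\mid\gcd(a_0,\dots,a_r)\iff Q\equiv X^{r+1}\pmod p$.
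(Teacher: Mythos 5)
Your proof is correct, but note that the paper itself contains no proof of this lemma: it is imported wholesale from \cite[Lemma 29]{Durand:2000}, so the only in-paper material to compare against is the elementary Cayley--Hamilton technique used for the neighbouring Lemmas \ref{exposantmatrice} and \ref{matricepolycar2}. Your direction (2)$\Rightarrow$(1) is exactly that technique: the recurrence $\ones M^{k+r+1}=-\sum_{i=0}^{r}a_i\,\ones M^{k+i}$ with $p\mid a_i$ gains one factor of $p$ every $r+1$ steps, and your quantitative conclusion $\ones M^{n(r+1)}\in p^n\ZZ^d$ is precisely the kind of effective bound the paper exploits elsewhere (compare the choice of $m_p$ in Step 2 of Algorithm 2). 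Your direction (1)$\Rightarrow$(2), by contrast, is a genuinely different route from anything displayed in the paper, whose toolkit is purely mod-$p^n$ linear algebra: you pass to $\mathbb{Q}_p$, split $Q=Q_{\mathrm{nil}}Q_{\mathrm{unit}}$ by the valuation of the roots (Galois-stable, hence defined over $\mathbb{Q}_p$), note $\ones_{\mathrm{unit}}\neq 0$ by minimality of $Q$, and derive the uniform lower bound $\inf_k\|\ones M^k\|_p>0$ from the $\ZZ_p$-integrality of $N^{-1}$ plus the ultrametric inequality. The argument is complete; two hygiene points are worth one sentence each in a final write-up. First, since $Q$ is monic in $\ZZ[X]$, all its roots in $\overline{\mathbb{Q}_p}$ are integral, so your ``valuation $\le 0$'' really means valuation exactly $0$; this is also what puts the coefficients of $Q_{\mathrm{unit}}$ and of the characteristic polynomial of $N^{-1}$ in $\ZZ_p$, which your boundedness claim uses. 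Second, minimality of $Q$ as annihilator of $\ones$ must be transported from $\mathbb{Q}$ to $\mathbb{Q}_p$; this is immediate because linear independence of $\ones,\ones M,\dots,\ones M^r$ persists under field extension, and it is exactly what forces $\ones_{\mathrm{unit}}\neq 0$. (The degenerate case $\ones M^k=0$ for some $k$ is also silently fine: then $Q=X^{r+1}$, $\gcd(a_0,\dots,a_r)=0$, and both properties hold.) What your route buys over the citation: a self-contained proof, a conceptual reformulation of condition (2) as topological nilpotence of $M$ on the cyclic space of $\ones$, and, as a by-product, the fact that when (1) fails the $p$-adic valuations of $\ones M^k$ are bounded uniformly in $k$ --- the qualitative content that Lemma \ref{bornematrice} later extracts combinatorially, though your constant $C$ is not explicit as stated, whereas the paper's graph argument yields the computable bound $Kp^d$.
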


\begin{lemma}
\label{matricepolycar2}
Let $M$ be a $d\times d$ matrix and $p$ a prime number.
If there exist $n\in \NN$ and $i\in \NN$ such that each of $\ones M^i,\dots,\ones M^{i+d}$ belong to $p^n\ZZ^d$ and do not belong to $p^{n+1}\ZZ^d$, then for all $j\geq i, \ones M^j$ does not belong to $p^ {n+1}\ZZ^d$.

As a consequence, if there exist two integers $n$ and $k$ satisfying $\ones M^k \in p^n \ZZ^d$,
then, in particular, the vector $\ones M^{nd} $ belongs to $ p^n \ZZ^d$.
\end{lemma}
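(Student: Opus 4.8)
The plan is to reduce both assertions to a statement about the forward orbit of a single vector under a matrix over the field $\mathbb{F}_p = \ZZ/p\ZZ$. Write $v_j = \ones M^j$. The hypothesis gives $v_i \in p^n\ZZ^d$, so $w_i := v_i/p^n$ is an integer vector; since $M$ has integer entries, $w_j := w_i M^{\,j-i} = v_j/p^n$ is an integer vector for every $j \ge i$, and $v_j \in p^{n+1}\ZZ^d$ holds exactly when $w_j \in p\ZZ^d$. Reducing modulo $p$, the reductions $\bar w_j \in \mathbb{F}_p^d$ satisfy $\bar w_j = \bar w_i\, \bar M^{\,j-i}$, where $\bar M$ is the reduction of $M$. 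Thus the first assertion becomes the following purely linear-algebraic claim over $\mathbb{F}_p$: if $u, uA, \dots, uA^d$ are all nonzero (with $u = \bar w_i$ and $A = \bar M$), then $uA^k \ne 0$ for all $k \ge 0$, which translates back into $v_j \notin p^{n+1}\ZZ^d$ for all $j \ge i$.

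First I would prove this core claim. Suppose to the contrary that $uA^{k_0} = 0$ for some minimal $k_0$; the hypothesis forces $k_0 > d$, so $u, uA, \dots, uA^{k_0-1}$ are all nonzero. I claim these $k_0$ vectors are linearly independent, which already contradicts $k_0 > d = \dim \mathbb{F}_p^d$. Indeed, given any nontrivial relation $\sum_{t=s}^{k_0-1} c_t\, uA^t = 0$ with $c_s \ne 0$ (take $s$ minimal), multiplying on the right by $A^{k_0-1-s}$ annihilates every term with $t > s$, since their exponent is at least $k_0$, and leaves $c_s\, uA^{k_0-1} = 0$, forcing $uA^{k_0-1} = 0$, a contradiction. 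This settles the first part. The main technical point is exactly this bound: the number of steps before the orbit $u, uA, uA^2, \dots$ can reach $0$ is at most $d$, so $d+1$ consecutive nonzero iterates guarantee that the orbit never vanishes.

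For the consequence, I would track the $p$-adic valuation $f(j) = \max\{m : \ones M^j \in p^m\ZZ^d\}$. Since $M$ is an integer matrix and $\ones M^j = (\ones M^{j'})M^{\,j-j'}$, the function $f$ is nondecreasing along the forward orbit: $f(j) \ge f(j')$ whenever $j \ge j'$. Assuming $f(k) \ge n$ for some $k$, set $t_m = \min\{j : f(j) \ge m\}$, which is finite for every $m \le n$ and satisfies $t_0 = 0$. The key step is the inequality $t_m \le t_{m-1} + d$: otherwise $\ones M^j$ would have valuation exactly $m-1$ at the $d+1$ consecutive indices $j = t_{m-1}, \dots, t_{m-1}+d$, and the first part (applied with $n$ replaced by $m-1$ and $i = t_{m-1}$) would force $\ones M^j \notin p^m\ZZ^d$, hence $f(j) = m-1$, for all $j \ge t_{m-1}$; this contradicts $f(k) \ge n \ge m$ together with $k \ge t_{m-1}$, the latter holding by minimality of $t_{m-1}$ since $f(k) \ge m-1$. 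Iterating the inequality yields $t_n \le t_0 + nd = nd$, so $f(nd) \ge n$, that is $\ones M^{nd} \in p^n\ZZ^d$, as desired.
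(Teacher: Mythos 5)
Your proof is correct and complete, and it takes a genuinely different route from the paper's. The paper works over $\ZZ$: assuming there is a smallest $j>i$ with $\ones M^{j+d}\in p^{n+1}\ZZ^d$, it invokes the Cayley--Hamilton theorem exactly as in Lemma \ref{exposantmatrice}, multiplying the resulting identity successively by $M^{j+d-1}, M^{j+d-2},\dots ,M^{j}$ to conclude that $p$ divides every coefficient of the characteristic polynomial, which then forces $\ones M^{i+d}\in p^{n+1}\ZZ^d$ and contradicts the hypothesis. You instead divide out $p^n$ and reduce modulo $p$, turning the first assertion into a clean statement over the field $\mathbb{F}_p$: an orbit $u, uA, uA^2,\dots$ that is nonzero for $d+1$ consecutive steps never vanishes, because the iterates preceding a first zero at time $k_0$ are linearly independent, forcing $k_0\le d$. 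Your independence step (right-multiplying a relation by $A^{k_0-1-s}$ to annihilate the higher terms) is the same kill-the-high-terms trick the paper runs through Cayley--Hamilton, but your framing replaces the characteristic polynomial by a dimension count in $\mathbb{F}_p^d$, which makes the role of the primality of $p$ transparent (invertibility of $c_s$ in $\mathbb{F}_p$) and is fully self-contained. You also gain something the paper omits: the second assertion is merely stated there as a consequence, whereas you derive it carefully from the first via the nondecreasing valuation $f$ and the inequality $t_m\le t_{m-1}+d$, including the check $k\ge t_{m-1}$ needed for the contradiction. What the paper's route buys in exchange is economy within the article: it reuses the argument of Lemma \ref{exposantmatrice} verbatim and exhibits divisibility of the characteristic-polynomial coefficients, which is precisely the quantity tracked by Lemma \ref{matricepolycar1}.
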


\begin{proof}
We only sketch the proof, the arguments used are those of Lemma \ref{exposantmatrice}.
By contradiction, let $j>i$ be the smallest integer such that  $\ones M^{j+d}$ belongs to $p^{n+1}\ZZ^d$. 
Let $Q(X) = \sum_{l=0}^{r+1}a_l X^l$ be the characteristic polynomial of $M$. 
Then, by Cayley-Hamilton theorem, every coefficient $a_l$, $0\leq l \leq r$, 
is a multiple of $p$, what can be set multiplying the equality by successively $M^{j+d-1}, M^{j+d-2} , \dots, M^{j}$. 
This leads to a contradiction.
\end{proof}

\begin{lemma}
\label{bornematrice}
Let $M$ be a $d\times d$ matrix and $p$ a prime number such that 
\begin{itemize}
\item
$\ones M^d\in p\ZZ^d$,
\item
$\exists n\in\NN : \forall k\in\NN, \ones M^k\not\in p^n\ZZ^d$.
\end{itemize}
Let $n_{\max} = \max\{n\in\NN : \exists k\in\NN, \ones M^k\in p^n\ZZ^d\}$
and $k_{\min} = \min\{k\in\NN : \ones M^k\in p^{n_{\max}}\ZZ^d\}$.
Let $K = \max\{k\in\NN : p^{k-1} \leq \max_{j} \sum_i M_{i,j} \}$. Then
\[
k_{\min } \leq p^d
\hbox{ and }  n_{\max} \leq  Kp^d .
\]
\end{lemma}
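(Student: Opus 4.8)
The plan is to phrase everything through the $p$-adic valuation of the integer vectors $\ones M^k$ and to establish the two inequalities separately, proving $k_{\min}\le p^d$ first and then deducing $n_{\max}\le Kp^d$ from it. Since these lemmas are applied to incidence matrices, I treat $M$ as a non-negative integer matrix; write $C=\max_j\sum_i M_{i,j}$ for its largest column sum, so that an easy induction gives $0<(\ones M^k)_a\le C^k$ for every coordinate $a$ (positivity because the relevant substitution is growing and primitive, so no $|\sigma^k(a)|$ vanishes). For $k\ge 0$ put $r_k=\min_a v_p\big((\ones M^k)_a\big)$. The second hypothesis forbids any $\ones M^k$ from being $0$, hence $n_{\max}=\max_k r_k<\infty$ and $k_{\min}=\min\{k:r_k=n_{\max}\}$. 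As $\ones M^{k+1}=(\ones M^k)M$ and $M$ is integral, $(r_k)_k$ is non-decreasing; consequently each level set $\{k:r_k=n\}$ is an interval, $\ones M^k\in p^{n_{\max}}\ZZ^d$ holds exactly for $k\ge k_{\min}$, and since $r_0=0$ while the first hypothesis forces $n_{\max}\ge 1$, we have $k_{\min}\ge 1$.

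Granting $k_{\min}\le p^d$, the second inequality is a size estimate. The vector $\ones M^{k_{\min}}$ is a nonzero element of $p^{n_{\max}}\ZZ^d$, so one coordinate is a positive multiple of $p^{n_{\max}}$, whence $p^{n_{\max}}\le(\ones M^{k_{\min}})_a\le C^{k_{\min}}$ for that $a$. The definition of $K$ yields $C<p^{K}$, so $p^{n_{\max}}<p^{Kk_{\min}}$ and therefore $n_{\max}<Kk_{\min}\le Kp^d$.

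It remains to bound $k_{\min}$, which is the heart of the matter. Reducing modulo $p^{n_{\max}}$, the orbit $(\ones M^k \bmod p^{n_{\max}})_k$ evolves by $v\mapsto vM$ and is absorbed at the fixed point $0$ exactly at time $k_{\min}$; minimality of $k_{\min}$ forces the states $\ones M^0,\dots,\ones M^{k_{\min}-1}$ to be pairwise distinct modulo $p^{n_{\max}}$, but this only gives the crude $k_{\min}\le p^{n_{\max}d}$. To reach the sharp $p^d$ I would instead track the normalised reductions $\bar w_k=\big(\ones M^k/p^{r_k}\big)\bmod p\in\mathbb{F}_p^d\setminus\{0\}$. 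On a maximal block where $r_k$ is constant one has $\bar w_{k+1}=\bar w_k\overline M$, so the $\bar w_k$ traverse an $\overline M$-orbit; by Lemma \ref{matricepolycar2} such a block below level $n_{\max}$ has length at most $d$ (a block of length $d+1$ at a level $n<n_{\max}$ would pin the valuation at $n$ forever, contradicting that $r$ later reaches $n_{\max}$), and there are at most $n_{\max}$ such blocks. The obstacle I expect to be hardest is exactly the passage between blocks: at an index where $r_k$ increases, $\bar w_{k+1}$ is not a function of $\bar w_k$ (it depends on higher $p$-adic digits of $\ones M^k$), so the normalised orbit need not be globally injective and a bare count yields only $k_{\min}\le n_{\max}d$. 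Closing the gap to $k_{\min}\le p^d$ requires showing that the $p^d$ residues of $\mathbb{F}_p^d$ cannot be revisited too often across these jumps, i.e. combining the finiteness of $\mathbb{F}_p^d$ with the fact that, by the second hypothesis, the valuation can never run away; this coupling is the delicate point of the argument.
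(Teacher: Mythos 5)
Your preparatory work is correct and partly an improvement on the paper: the monotonicity of the valuations $r_k$, the non-vanishing of $\ones M^k$ under the second hypothesis, the bound $k_{\min}\le d\,n_{\max}$ via Lemma \ref{matricepolycar2} (which the paper also records at the start of its proof), and above all the clean derivation of the second inequality from the first, via $p^{n_{\max}}\le(\ones M^{k_{\min}})_a\le C^{k_{\min}}$ and $C<p^K$, hence $n_{\max}<Kk_{\min}$ --- the paper instead reads both bounds off one argument. But the proposal is genuinely incomplete exactly where you say it is: the inequality $k_{\min}\le p^d$, the heart of the lemma, is never proved, and since your $n_{\max}\le Kp^d$ is conditional on it, neither assertion of the lemma is established. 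Combining the two facts you do have, $k_{\min}\le d\,n_{\max}$ and $n_{\max}<Kk_{\min}$, is circular and yields nothing. The obstruction you identify is real and structural: the normalised reduction $\bar w_k=(\ones M^k/p^{r_k})\bmod p$ retains only the leading $p$-adic digit, and at an index where $r_k$ jumps the new leading digit is manufactured from precisely the higher digits you discarded, so no self-contained finite-state evolution on $\mathbb{F}_p^d\setminus\{0\}$ comes out of this bookkeeping, and no amount of counting blocks of constant valuation will convert $d\,n_{\max}$ into $p^d$.

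The paper's missing idea is to keep \emph{all} the digits, not just the leading one. It builds a directed graph on the nonzero digit vectors in $[0,p-1]^d\setminus\{0\}$ (so fewer than $p^d$ vertices): from a vertex $V$ one expands $VM=V_0+pV_1+\cdots+p^kV_k$ in base $p$ and draws an edge labelled $i$ from $V$ to each nonzero digit $V_i$; the labels are bounded by $K$ because column sums are at most $C<p^K$. Iterating from $\ones$, the vector $\ones M^k$ decomposes as a non-negative sum of terms $p^{s(P)}W_P$ over the length-$k$ walks $P$ from $\ones$, where $s(P)$ is the label-sum, and the paper identifies $N(k)$, the valuation of $\ones M^k$, with the minimal label-sum over such walks. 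Since $N$ is non-decreasing and bounded by $n_{\max}$, a cycle with positive label-sum inside a minimal walk is impossible (deleting it would lower a value of $N$ that has already stabilised at $n_{\max}$), so all cycles in a minimal walk carry label-sum $0$; deleting them leaves a walk of length at most the number of vertices, hence at most $p^d$, with unchanged label-sum. This one cycle-deletion argument delivers both conclusions simultaneously: $k_{\min}\le p^d$, and, each of the at most $p^d$ labels being at most $K$, $n_{\max}\le Kp^d$. So the repair for your proof is not a finer analysis of the block transitions of $\bar w_k$, but replacing the leading-digit state by the full digit graph, whose vertex count is what produces the bound $p^d$ demanded by the statement.
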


\begin{proof}
The inequality $k_{\min}\leq  d n_{\max} $ is a consequence of the previous lemma. 
We detail the proof of the second inequality.

We define a graph $\G$ whose vertices belong to $[0,p-1]^d\setminus \{ (0,\dots , 0)\}$ and
are given by the decomposition in base $p$ of the vectors $\ones M^k$ for $k\in\NN$.
More precisely, $\G$ is the unique oriented graph 
defined as follows:

\begin{itemize}
\item
the vector $\ones$ belongs to $\G$;
\item
there is an edge from $V$ to $W$  with label $i$ if 
$$VM =  p^k V_k + p^{k-1}V_{k-1} + \cdots + p V_1 + V_0 , $$ 
with $V_0,\dots, V_k$ in $[0,p-1]^d\setminus \{ (0,\dots , 0)\}$, and $W = V_i$ for some $i$.
\end{itemize}

Notice that the labels of the edges are bounded by $K$.

Let $N(k) = \max\{n\in\NN : \ones M^k\in p^n\ZZ^d\}$. 
The sequence $k\mapsto N(k)$ is non-decreasing and is eventually constant equal to $n_{\max}$.

The aim of this graph is to give a bound for $k_{\min }$, that is, the first $k$ such that $N(k) = n_{\max}$. 

Observe  that the growth of $N(k)$ is controlled by the paths of length $k$ starting from $\ones$.
Indeed, for $k\in\NN$, let $P_1, \dots , P_m$  be these paths.
To each path $P_i$, we associate $s (i)$, the sum of its labels.
Then, $N (k) = \min_{1\leq i \leq m} s(i)$.
Let $P_i$ be a path realizing this minimum. 
Notice that, as $k\mapsto N(k)$ is bounded by $n_{\max }$, any cycle in $P_i$ should be labelled by $0$.
Hence, deleting the cycles in $P_i$, whose weight is $0$ in $s(i)$, the remaining path has length at most $p^d$ and thus $s(i) \leq Kp^d$.
In other words, one has $k_{\min } \leq p^d$ and $n_{\max} \leq  Kp^d$.
\end{proof}

\subsection*{Constant-length substitutions}
Let us now consider some particular minimal substitution subshifts where the situation is easier to handle. 

The group of eigenvalues for minimal constant-length substitution subshifts has been determined by Dekking \cite{Dekking1978}. 
To this end he introduced the notion of height of a substitution.
Let $\sigma$ be a primitive substitution of constant-length and $x=(x_n)_{n\in \mathbb{Z}}$ one of its fixed points. 
The \emph{height} of $\sigma$ is 
$$
h(\sigma )=\max\{n\geq 1 : (n, |\sigma |) =1, n \text{ divides } g_0\},
$$ 
where $g_0 = \gcd\{n\geq 1, x_{n}=x_0\}$. 

\begin{theo}[\cite{Dekking1978}]
\label{theo:dekking}
Let $\sigma$ be a  primitive constant-length substitution and $(X,S)$ the subshift it generates.
Suppose $(X,S)$ is non-periodic.
Then, 
$$
\mathbb{P} (X , S ) =  \left\{ p\in \NN : p \: \text{divides } \:  h (\sigma )|\sigma |^n  \: \hbox{ for some }\: n \in  \NN \right\}.
$$
Moreover, the group of eigenvalues of $(X , S)$ is 
$$\left\{ 
\exp \left( 2i\pi q/p \right) :   q\in \mathbb{Z} , p \in  \mathbb{P} (X , S )  \right\} .
$$
\end{theo}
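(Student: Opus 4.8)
The plan is to pin down the group of eigenvalues and then read off $\mathbb{P}(X,S)$ using Proposition~\ref{eigenvalues} together with the remark following it. Write $\ell = |\sigma|$ and $h = h(\sigma)$, and recall that by definition $\gcd(h,\ell)=1$. By Proposition~\ref{eigenvalues} one has $p\in\mathbb{P}(X,S)$ iff $\exp(2i\pi/p)$ is an eigenvalue, and since $\gcd(h,\ell)=1$ the set of denominators $\{h\ell^n : n\in\NN\}$ is closed under least common multiple; hence $p\mid h\ell^n$ for some $n$ iff the largest divisor of $p$ coprime to $\ell$ divides $h$, and the two displayed formulas express the same fact. It therefore suffices to prove: (A) $\exp(2i\pi/p)$ is an eigenvalue exactly when $p\mid h\ell^n$ for some $n$; and (B) every additive eigenvalue of $(X,S)$ is rational, so that the remark after Proposition~\ref{eigenvalues} delivers the full eigenvalue group.

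For the inclusion producing eigenvalues I would build the two expected factor maps. Since $(X,S)$ is non-periodic and $\sigma$ is primitive, recognizability (Moss\'e) gives a well-defined $\ell$-adic addressing map $\theta:X\to\mathbb{Z}_\ell$ onto the $\ell$-adic odometer conjugating $S$ to the adding machine; pulling back the characters $x\mapsto\exp(2i\pi q/\ell^n)$ shows each $\exp(2i\pi q/\ell^n)$ is an eigenvalue. Next, the definition of the height yields a factor onto $(\ZZ/h\ZZ,R)$: since $g_0=\gcd\{n\ge1:x_n=x_0\}$ forces every occurrence position of the distinguished phase to lie in $g_0\ZZ\subseteq h\ZZ$, coloring $X$ by the induced residue modulo $h$ is an $S$-equivariant map onto the rotation $R$, whence by the implication \eqref{item:4}$\Rightarrow$\eqref{item:1} of Proposition~\ref{eigenvalues} the value $\exp(2i\pi/h)$ is an eigenvalue. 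These generate every $\exp(2i\pi q/(h\ell^n))$, giving the inclusion $\{p:p\mid h\ell^n\text{ for some }n\}\subseteq\mathbb{P}(X,S)$.

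The substantive content is the reverse inclusion together with (B), i.e.\ that the maximal equicontinuous factor is exactly $\mathbb{Z}_\ell\times\ZZ/h\ZZ$. Here I would use Dekking's height reduction: recode $\sigma$ into a primitive constant-length-$\ell$ substitution $\sigma_1$ of height $1$ so that $(X,S)$ is a $\ZZ/h\ZZ$-extension of $(X_{\sigma_1},S)$. For the height-$1$ base one arranges a proper presentation and applies Lemma~\ref{vpmatrice}: there $\exp(2i\pi/p)$ is an eigenvalue iff $p$ divides some $|\sigma_1^m(a)|=\ell^m$, i.e.\ iff $p$ is $\ell$-smooth, so the base contributes only the denominators $\ell^n$. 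Since $h$ was chosen maximal among divisors of $g_0$ coprime to $\ell$, lifting through the extension multiplies the admissible denominators by $h$ and no more (this is where $\gcd(h,\ell)=1$ prevents the two parts from interfering); hence any root-of-unity eigenvalue has order dividing some $h\ell^n$, proving $\mathbb{P}(X,S)\subseteq\{p:p\mid h\ell^n\}$.

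The hardest step, and the one I would treat most carefully, is (B): ruling out irrational eigenvalues. This is precisely the assertion that the maximal equicontinuous factor of a constant-length subshift is the odometer-by-height system, and I would obtain it either by citing \cite{Dekking1978} or by the classical argument that the eigenvalue relation along the substitution blocks forces $\lambda^{\ell^{n}}\to1$, so that $\lambda$ must be a root of unity of $\ell$-power order up to the height correction. The two remaining delicate points are verifying that the recoded $\sigma_1$ genuinely has height $1$ and admits a proper presentation of the same length $\ell$ (so that Lemma~\ref{vpmatrice} applies), and tracking how eigenvalues lift and descend along the $\ZZ/h\ZZ$-extension. Once the spectrum is identified, the formula for $\mathbb{P}(X,S)$ and for the eigenvalue group follow at once from Proposition~\ref{eigenvalues} and its remark.
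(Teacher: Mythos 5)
The paper itself gives no proof of Theorem \ref{theo:dekking}: it is quoted from Dekking \cite{Dekking1978}, so the comparison can only be with Dekking's original argument, whose architecture your outline reproduces --- the lower bound from the two canonical factors (the $\ell$-adic odometer, and the rotation on $\ZZ/h\ZZ$ obtained from the phase partition of Proposition \ref{prop:hpart}), the upper bound by height reduction to a pure base of height $1$, and the final bookkeeping via Proposition \ref{eigenvalues} and the remark following it, which is correct as you state it.

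Two steps, however, fail as written. First, Lemma \ref{vpmatrice} is not available for the pure base $\sigma_1$: that lemma assumes a left-proper substitution, a pure base of a constant-length substitution need not be proper (the Thue--Morse substitution, its own pure base, already is not), and the proper recoding you allude to does not preserve constant length. Worse, the implication \eqref{vpmatrice:1}$\Rightarrow$\eqref{vpmatrice:2} is simply false for non-proper constant-length substitutions of height $h>1$: in Example 4 of the paper, $\exp(2i\pi/3)$ is an eigenvalue although $3\nmid 2^m$ for all $m$. So properness is essential there, and what you need for the base --- that its only eigenvalues are the $\ell^n$-th roots of unity --- is precisely the content at stake; it must be proved directly (via return times of cylinders and the elementary rigidity fact that $\lambda^{\ell^n}\to 1$ forces $\lambda$ to be an $\ell^n$-th root of unity for some $n$), since falling back on citing \cite{Dekking1978} makes the argument circular as a proof of Dekking's theorem. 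Second, your claim in (B) that the block structure forces $\lambda^{\ell^n}\to 1$ is false verbatim for the full system when $h>1$: for $\lambda=\exp(2i\pi/h)$ the powers $\lambda^{\ell^n}$ cycle without converging, because $\gcd(h,\ell)=1$. Moreover the height structure is a tower, not a $\ZZ/h\ZZ$-group extension: $X$ splits into $h$ clopen floors cyclically permuted by $S$, the induced system on a floor is the pure-base subshift, and the correct transfer statement is that $\lambda$ is an eigenvalue of $(X,S)$ if and only if $\lambda^h$ is an eigenvalue of the base (restrict an eigenfunction to floor $0$ for one direction; conversely extend $f_0$ by $f(S^iz)=\lambda^i f_0(z)$, continuous because the floors are clopen). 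Run the rigidity argument on $\lambda^h$ in the base and your outline closes; as it stands, the clauses ``one arranges a proper presentation'' and ``up to the height correction'' conceal exactly the two points where the theorem's actual mathematical content lives.
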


\begin{remark}
As a consequence, let us recall the well-known fact that subshifts arising from primitive constant-length substitutions only admit  eigenvalues that are roots of the unity. 

This does not hold for non constant-length substitution subshift. 
For example, consider the subshift arising from the Fibonacci substitution $\sigma : 0\to 01, 1\to 0$. 
The set of its eigenvalues is $\{\exp(2i\pi n\alpha ) : n\in\NN\}$ with $\alpha = (\sqrt{5}-1)/2$ \cite[Sec. 4.3.1]{Kurka}. The only eigenvalue that is a root of unity is 1.
\end{remark}

\textbf{Example.} Let us consider the substitution $\sigma$ defined by 
$0\to 0213 ; 1\to 1341 ; 2\to 4104 ; 3\to 0413 ; 4\to 2134$ \cite[sec.\ 6.1.1]{Queffelec:2010}. Its length is $|\sigma| = 4$. Following the algorithm of Dekking 
\cite[Remark\ II.9]{Dekking1978}, we can compute  $h(\sigma) = 3$.
We conclude that the periodic spectrum of the underlying dynamical system $(X,S)$ is $\mathbb{P} (X , S ) = \{3^{\delta}\times 2^n : \delta \in \{ 0,1\}, n\in\NN  \} $.

\subsection*{Automatic sequences}

Let  $\sigma : A^* \to \A^*$ be a substitution of constant-length $l$ and $\phi : \A^* \to \B^*$ be a coding.
Let $x$ be a fixed point (in $\A^\NN$ or $\A^\ZZ$) of the $\sigma$.
The sequence $y = \phi (x)$ is called \emph{$l$-automatic}. 
It is a substitutive sequence with respect to a constant-length substitution.
A huge litterature exists on this family of sequences \cite{Allouche&Shallit:2003}.


\section{An application to intersections of languages}
\label{section:intersection}

Below we say that a language is Toeplitz, Sturmian, morphic, automatic, ... whenever it is the language of a sequence of the same type.  

In a recent paper \cite{Rampersad&Shallit:2018} it is observed that $k$-automatic sequences and Sturmian sequences cannot have arbitrary large factors in common. 
It is not surprising as it is a straightforward consequence of the well-known results recalled in the previous section and basics on dynamical systems. 
In the same spirit the following proposition can be established. 

\begin{prop}
\label{prop:intlang}
Let $\L_1 $ and $\mathcal{L}_2 $ be two languages.
Then, $\mathcal{L}_1 \cap \mathcal{L}_2 $ is finite in all the following situations:
\begin{enumerate}
\item
\label{item:autosturm}
$\L_1$ is automatic and $\L_2$ is sturmian;
\item
\label{item:sturmtoep}
$\L_1$ is sturmian and $\L_2$ is Toeplitz;
\item
\label{item:morphicsturm}
$\L_1$ is morphic and $\L_2$ is sturmian associated to a non quadratic rotation number;
\end{enumerate}
\end{prop}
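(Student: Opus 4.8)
The plan is to reduce all three cases to a single dynamical statement: if two subshifts share factors of unbounded length, then the minimal one embeds as a subsystem of the other. Concretely, I would first record the following compactness lemma. Writing $\mathcal{L}_1 = \mathcal{L}(X_1)$ and $\mathcal{L}_2 = \mathcal{L}(X_2)$, if $\mathcal{L}_1 \cap \mathcal{L}_2$ is infinite then, since a language contains only finitely many words of each length, the intersection contains words $w_k$ with $|w_k| \to \infty$. Choosing for each $k$ a point of $X_1$ and a point of $X_2$ in which $w_k$ occurs centred at the origin, and extracting a convergent subsequence (both chosen points agree with $w_k$ on a window that eventually exhausts $\ZZ$), I obtain a common point $z \in X_1 \cap X_2$. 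If $X_2$ is minimal this forces $\overline{\O(z)} = X_2 \subseteq X_1$; if both are minimal, $X_1 = X_2$. Recall that Sturmian and Toeplitz sequences are uniformly recurrent, so the subshifts they generate are minimal.

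For case \eqref{item:sturmtoep}, both $X_1$ (Sturmian) and $X_2$ (Toeplitz) are minimal, so the lemma gives $X_1 = X_2$. Let $x$ be the Toeplitz sequence generating $X_2$; then $x \in X_1$, and since every point of a Sturmian subshift is itself a Sturmian sequence (all points share the language, hence the complexity $n+1$, and are uniformly recurrent), $x$ is Sturmian. By the proposition established just above, a Sturmian sequence admits no constant arithmetic subsequence, whereas the defining property of a Toeplitz sequence produces a constant arithmetic subsequence through every index. This contradiction shows $\mathcal{L}_1 \cap \mathcal{L}_2$ is finite.

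For cases \eqref{item:autosturm} and \eqref{item:morphicsturm}, the Sturmian subshift $X_2$ is minimal, so the lemma places it as a subsystem $X_2 \subseteq X_1$. The decisive structural input is that a minimal subshift contained in an automatic (resp. morphic) subshift is again automatic (resp. morphic): its points are uniformly recurrent, hence primitive substitutive by Theorem \ref{thm:morphicsubstitutive}, and in the automatic case the substitution may moreover be taken of constant length. In case \eqref{item:autosturm}, Theorem \ref{theo:dekking} then forces every eigenvalue of $X_2$ to be a root of unity, contradicting the fact recalled from \cite{Kurka} that a Sturmian subshift has an eigenvalue $\exp(2i\pi\alpha)$ with $\alpha$ irrational. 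In case \eqref{item:morphicsturm}, $X_2$ is a primitive substitutive Sturmian subshift, and by the classical characterization of substitution-invariant Sturmian sequences its rotation number must then be a quadratic irrational, contradicting the hypothesis that it is non-quadratic.

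The main obstacle is precisely this structural claim that a minimal subsystem of an automatic (resp. morphic) subshift is itself automatic (resp. primitive substitutive): it is what lets the recalled spectral and arithmetic results be applied to $X_2$ rather than to the ambient $X_1$. The naive alternative fails, since eigenfunctions restrict from $X_1$ to $X_2$ in the unhelpful direction, and a subsystem may a priori carry strictly more eigenvalues than the whole system. Once the structural claim is granted, each case closes by a direct appeal to the eigenvalue computations of the previous section — Dekking's theorem for the automatic case, the quadratic-slope characterization for the morphic case — together with the elementary observation about constant arithmetic subsequences for the Toeplitz case.
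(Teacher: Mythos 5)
Your overall architecture is exactly the route the paper sketches (the paper only gives hints): pass from languages to subshifts, use the compactness argument to get a common point and hence, by minimality of the Sturmian or Toeplitz side, a containment $X_2\subseteq X_1$, then derive a spectral contradiction from the results of the preceding section. Your reduction lemma is correct, and your treatment of case (2) is complete: both subshifts are minimal, so they coincide, the Toeplitz point is then a Sturmian sequence, and it would carry a constant arithmetic subsequence, contradicting the proposition proved just before; this matches what the paper intends.

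There is, however, a genuine gap precisely where you place the ``decisive structural input'' for cases (1) and (3). To show that a minimal subsystem $X_2$ of an automatic (resp.\ morphic) subshift is again automatic (resp.\ primitive substitutive), you argue that the points of $X_2$ are uniformly recurrent, ``hence primitive substitutive by Theorem \ref{thm:morphicsubstitutive}''. This misapplies that theorem: its hypothesis is that the sequence itself is morphic, whereas a point of $X_2$ is merely an element of the orbit closure of a morphic sequence, which does not make it morphic. Indeed it cannot, for cardinality reasons: an aperiodic minimal subshift is uncountable, while morphic sequences form a countable family, so almost no point of $X_2$ is morphic, let alone primitive substitutive. (The same conflation of points with systems occurs in your phrase ``its points are \dots\ primitive substitutive''; even in a primitive substitution subshift only countably many points are substitutive sequences --- the correct statement is about the generating sequence or the language.) What your argument actually requires is the true but nontrivial theorem that every minimal subsystem of the subshift generated by an automatic (resp.\ morphic) sequence is itself generated by an automatic (resp.\ substitutive) sequence; neither your sketch nor the toolkit recalled in the paper establishes this. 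If one reads the proposition with $\L_1$ the language of a \emph{uniformly recurrent} automatic or morphic sequence --- the setting the paper works in throughout --- then $X_1$ is minimal, your lemma gives $X_1=X_2$, and your appeals to Theorem \ref{theo:dekking} (automatic case) and to the quadratic rotation number (morphic case) go through applied to the generating sequence itself, which \emph{is} morphic by hypothesis. A last, smaller point in case (3): the classical characterization of substitution-\emph{invariant} Sturmian sequences (fixed points) is not quite the statement you need, since $X_2$ would only be generated by a coding of a fixed point; the correct input is the subshift-level fact the paper cites from Dartnell--Durand--Maass, namely that morphic Sturmian subshifts are $\alpha$-morphic with $\alpha$ quadratic.
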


\begin{proof}
We give some hints. 
The details are left to the reader.
Assertions \eqref{item:autosturm} and \eqref{item:sturmtoep} can be deduced from the previous section applied to the subshifts generated by the languages. 
Assertion \eqref{item:morphicsturm} comes from the fact that morphic subshifts that are also sturmian  are $\alpha$-morphic for some quadratic number $\alpha$ \cite{Dartnell&Durand&Maass:2000}. 
\end{proof}

When $\L_1$ is $\alpha$-morphic primitive with $\alpha \not \in \mathbb{Z}$ and $\L_2$ is Toeplitz we would also like to conclude that $\mathcal{L}_1 \cap \mathcal{L}_2 $ is finite but we did not find obvious arguments. 
One can show that when the underlying substitution has determinant $\pm 1$ or has all its (matrix) eigenvalues with modulus greater or equal to $1$ then $\mathcal{L}_1 \cap \mathcal{L}_2 $ is finite but we leave the whole case as a question.


\section{Constant arithmetic subsequences and eigenvalues for minimal morphic subshifts}
\label{section:algo}


In this section we prove the following theorem.
Notice that its second statement corresponds to a positive answer to Question 1 for uniformly recurrent morphic sequences.

\begin{theo} 
\label{decidabilite}
Let $y\in \B^\ZZ$ be a uniformly recurrent morphic sequence with respect to the endomorphism $\sigma : \A^* \to  \A^*$ and the morphism $\phi : \A^* \to \B^*$.
Let $(Y,S)$ be the minimal subshift it generates.
Then the following properties hold.
\begin{enumerate}
\item
\label{decidabilite:1}
The periodic spectrum of $(Y,S)$ is algorithmically computable.
\item
\label{decidabilite:2}
Given an integer $p\in\NN$, it is algorithmically decidable whether $y$ contains a letter in arithmetic progression with common difference $p$.
\end{enumerate}
\end{theo}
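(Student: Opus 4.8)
The plan is to reduce the whole statement to a \emph{proper primitive substitution} generating $(Y,S)$, for which the matrix criteria of the previous section apply, and then to treat the two assertions separately: \eqref{decidabilite:1} is a pure eigenvalue computation, while \eqref{decidabilite:2} requires the extra information of which eigenvalue is \emph{realised by a single letter}. Throughout I may assume $(Y,S)$ is non-periodic, the periodic case giving a finite system in which everything is decided directly.

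First I would invoke Theorem \ref{thm:morphicsubstitutive}: being uniformly recurrent and morphic, $y$ is primitive substitutive, so $(Y,S)$ is a primitive substitution subshift. I would then replace it, effectively, by a proper one: there is a proper primitive substitution $\tau:\mathcal{C}^*\to\mathcal{C}^*$ with a fixed point $w$ and a topological isomorphism $\Phi:(X_\tau,S)\to(Y,S)$, which after a harmless shift satisfies $\Phi(w)=y$. This reduction is the \textbf{main obstacle}, since properness is exactly the hypothesis under which Lemma \ref{vpmatrice} and the explicit partitions of Proposition \ref{eigenvalues} become usable; it rests on the effective construction of a proper substitution for a primitive substitutive sequence (via return words).

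For \eqref{decidabilite:1}, eigenvalues are preserved under topological isomorphism, so $\mathbb{P}(Y,S)=\mathbb{P}(X_\tau,S)$, and by Proposition \ref{eigenvalues} this equals $\{p\geq 2:\exp(2i\pi/p)\text{ is an eigenvalue of }(X_\tau,S)\}$. Since eigenvalues form a group, this set is closed under taking divisors, hence is determined by the prime powers it contains, and I would compute it prime by prime. By Lemma \ref{vpmatrice}\eqref{vpmatrice:3} a prime $p$ lies in $\mathbb{P}(Y,S)$ iff $\ones M^d\in p\ZZ^d$, where $M=M_\tau$ and $d=|\mathcal{C}|$; this restricts the relevant primes to the finitely many divisors of $\gcd_{a}|\tau^d(a)|$. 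For each such $p$, Lemma \ref{matricepolycar1} decides whether $p\in\mathbb{P}\mathbb{P}^\infty(Y,S)$ (all powers occur), and otherwise Lemmas \ref{matricepolycar2} and \ref{bornematrice} bound and then compute the largest exponent $B(p)$ with $p^{B(p)}\in\mathbb{P}(Y,S)$. Assembling these finitely many prime-power data produces the required finite description of $\mathbb{P}(Y,S)$.

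For \eqref{decidabilite:2}, I would use that $y$ admits a constant subsequence of common difference $p$ iff $p\in{\rm Per}(y)$, i.e. iff ${\rm PS}_p(y,[b])\neq\emptyset$ for some $b\in\mathcal{B}$ (the discussion around Lemma \ref{lemma:casessential}). Transporting through $\Phi$, set $C_b=\Phi^{-1}([b])$; as $\Phi$ is a sliding block code of computable radius, $C_b$ is a finite union of cylinders of $X_\tau$, and ${\rm PS}_p(y,[b])\neq\emptyset$ iff ${\rm PS}_p(w,C_b)\neq\emptyset$. Every essential period for the clopen set $C_b$ lies in $\mathbb{P}(X_\tau,S)=\mathbb{P}(Y,S)$, so it suffices to test, for each $b$ and each divisor $q\mid p$ with $q\in\mathbb{P}(Y,S)$, whether ${\rm PS}_q(w,C_b)\neq\emptyset$. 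The decisive point is that for $q\in\mathbb{P}(X_\tau,S)$ the factor $\pi_q:X_\tau\to\ZZ/q\ZZ$ of Proposition \ref{eigenvalues}\eqref{item:4} has fibres $\pi_q^{-1}(j)$ that are exactly the minimal $S^q$-invariant closed sets: by minimality of $(X_\tau,S)$ the shift $S$ cyclically permutes the $q$ fibres, so any nonempty closed $S^q$-invariant subset of a fibre spreads under $S$ to an $S$-invariant set, hence to all of $X_\tau$. Consequently the $S^q$-orbit closure of $S^kw$ is the whole fibre $\pi_q^{-1}(\pi_q(S^kw))$, and, $C_b$ being clopen, ${\rm PS}_q(w,C_b)\neq\emptyset$ iff $\pi_q^{-1}(j)\subseteq C_b$ for some $j$. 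Since the partition of Proposition \ref{eigenvalues}\eqref{item:3} can be made explicit as a finite union of cylinders, this inclusion reduces to a finite verification over the factors of $w$ of bounded length, and the algorithm answers \emph{yes} precisely when it succeeds for some admissible pair $(b,q)$.
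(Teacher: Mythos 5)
Your reduction via Theorem \ref{thm:morphicsubstitutive} to a proper primitive substitution with an isomorphism onto $(Y,S)$, and your entire treatment of assertion \eqref{decidabilite:1}, coincide with the paper's Algorithm 1: the primes are confined to the divisors of $\gcd_a|\tau^d(a)|$ by Lemma \ref{vpmatrice}, then Lemmas \ref{matricepolycar1}, \ref{matricepolycar2} and \ref{bornematrice} settle which prime powers occur, and the group structure of the eigenvalues reassembles $\mathbb{P}(Y,S)$. For assertion \eqref{decidabilite:2} your dynamical reformulation is correct as mathematics: the fibres of $\pi_q$ are indeed minimal for $S^q$ (your spreading argument is the standard one), so ${\rm PS}_q(w,C_b)\neq\emptyset$ iff some fibre is contained in $C_b$; and since the paper's isomorphism can be taken to be a coding, $C_b$ is even a radius-$0$ clopen set, a union of letter cylinders.

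The genuine gap is the final step, which is exactly where the algorithmic content of the theorem lives. You assert that ``the partition of Proposition \ref{eigenvalues}\eqref{item:3} can be made explicit as a finite union of cylinders'', so that $\pi_q^{-1}(j)\subseteq C_b$ reduces to ``a finite verification over the factors of $w$ of bounded length'' --- but you give neither the bound nor the construction, and the proof of Proposition \ref{eigenvalues} cannot supply them: it defines $V=\{y : {\rm PS}_p(y,U)={\rm PS}_p(x,U)\}$ non-constructively and yields no computable radius $R$ such that fibre membership depends only on coordinates in $[-R,R]$. Note the asymmetry: failure of the inclusion is semi-decidable by scanning longer and longer portions of $w$, but \emph{confirming} it in finite time requires a certificate, and without one your procedure only half-decides the question. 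The paper closes exactly this hole with a divisibility trick you already have in hand from part \eqref{decidabilite:1}: by Lemma \ref{vpmatrice}\eqref{vpmatrice:2} and Lemma \ref{matricepolycar2} one computes $m_p$ with $\tilde p$ dividing $|\tau^{m_p}(a)|$ for every letter $a$; then in the decomposition $w=\tau^{m_p}(w)$ all block boundaries fall at multiples of $\tilde p$, the residue mod $\tilde p$ is well defined within blocks, and the fibre inclusion becomes the finite test of the paper's Algorithms 2--3: does some residue $i$ make the map $(n,a)\mapsto(\tau^{m_p}(a))_{i+n\tilde p}$ take all its values inside a single set $\psi^{-1}(\{b\})$? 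Inserting this (or, alternatively, an effective recognizability statement, which the paper never needs) would make your fibre argument a complete, if slightly longer, proof.
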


Let us describe the way we proceed.
We will consider two cases.
We will begin considering purely substitutive sequences.
Our algorithms rely on properties concerning non-periodic sequences.
Thus we first consider the periodic case. 
It is easy to treat and provides all the letters appearing in $x$ in arithmetic progression.
We can then process, in the non-periodic case, with Algorithm 1 to compute the periodic spectrum. 
It is a requirement to apply Algorithm 2, which proves the second statement of the above theorem.

The general case of uniformly recurrent morphic sequences will come as a consequence.

\begin{remark}
Let $y\in \A^\ZZ$ be a uniformly recurrent sequence and $p$ be some positive integer.
If there exists $n_0$ such that $(y_{k+np})_{n>n_0}$ is constant, then, due to uniform recurrence, for any $z$ in the subshift generated by $y$, there exists $i\in [ 0,p) $ such that $(z_{i+np})_{n\in\ZZ}$ is constant.
As we are considering uniformly recurrent sequences in this section, it is enough to consider any admissible one-sided fixed point of $\sigma$ to check whether $x$ contains constant arithmetic subsequences.
\end{remark}

Below, the inputs are an  endomorphism $\sigma : \A^* \to \A^*$, a morphism $\phi : \A^* \to \B^*$, $x\in \A^\ZZ$ an admissible fixed point of $\sigma$ and $y=\phi (x)$ a uniformly recurrent sequence belonging to $\B^\ZZ$.
We recall it is decidable to check whether $y$ is uniformly recurrent or not, and uniformly recurrent morphic sequences are substitutive sequences with respect to primitive substitutions  \cite{Durand:2013b}. 
Thus, in Sections \ref{section:periodic}, \ref{subsec:algo1} and \ref{subsec:algo2}, we suppose the endomorphism $\sigma $ is primitive.

Let $(X,S)$ be the minimal subshift generated by $x$ and $(Y,S)$ be the minimal subshift generated by $y$. 
We set $M = M_\sigma$.
Algorithms  1 and 2 below answer positively to the decidability of  Question 1 for purely substitutive sequences with respect to primitive substitutions.

We treat the general case in Section \ref{subsec:algo3}.

\subsection{The periodic case}
\label{section:periodic}
In this section, we consider the particular case of a  left-proper primitive substitution $\sigma : \A^* \to \A^*$ right-prolongable on $a\in \A$.
Let $x=\sigma^\omega(a)$, $(X,S)$ be the minimal substitution subshift generated by $\sigma$ and $M$ its incidence matrix.

It is decidable to check whether $x$ is periodic\cite{Durand:2012}.
Moreover, if the answer is positive, this algorithm gives a period $q$ for the sequence $x$. 
As a consequence of Fine and Wilf Theorem \cite[Theorem 8.1.4]{Lothaire}, the essential period of $x$ is a divisor of $q$. 
Thus, to find this essential period, it suffices to consider the factor $x_{[0,q-1]}$ of $x$ and check if it has period $p$ for every divisor $p$ of $q$.
The smallest such period $p$ is the essential period of $x$.

Then the periodic subshift generated by $x$ has the following periodic spectrum:
$$ \PP(X,S) = \{p'\in\NN : p' \mbox{ divides } p\}. $$

Hence, 
for each $i\in\NN$, there exists a constant arithmetic subsequence starting at index $i$, with an essential period $p_i$ that divides $p$.
All the $p_i$ can be determined by studying the occurrences of letter $x_i$ in the factor $x_{[0,p-1]}$ of $x$.

%
%
%

For the sequel, we suppose that the subshift $(X,S)$ is non-periodic.

\subsection{Algorithm 1: determining $\PP (X,S)$ when $\sigma $ is left-proper} 
\label{subsec:algo1}

In order to determine $\PP (X,S)$ we first determine the set  $\PP\PP (X,S)$ of prime numbers belonging to $\PP (X,S)$. 
Observe that from Proposition \ref{eigenvalues} and Lemma \ref{vpmatrice} the set $\PP\PP (X,S) $ is finite.
This set is composed with two types of primes.
As in section \ref{sec:eigandsubseq}, let $\PP \PP^\infty (X,S)$ be the set of $p\in \PP \PP (X,S)$ such that $p^n$ belongs to $\PP (X,S) $ for all $n$.
For $p\in \PP \PP (X,S) \setminus \PP \PP^\infty (X,S)$ we set 
\[
N_{\rm max} (p) = \max \{ n : p^n \in \mathbb{P} (X,S) \} .
\]
Let 
\[
\mathbb{P} \mathbb{P}^\infty (X,S) =  \{ p_1 , \dots , p_k \} 
\text{  and  }
\mathbb{P} \mathbb{P} (X,S)\setminus 
\mathbb{P} \mathbb{P}^\infty (X,S) =   \{ q_1 , \dots , q_l \}.
\]

Observe that the set of additive eigenvalues of $(X,S)$ being a subgroup of $\mathbb{R} / \mathbb{Z}$, $ \mathbb{P}(X,S)$ is the set of integers
\begin{align}
\label{align:P(X,S)}
\prod_{1\leq i\leq k} p_i^{r_i} \prod_{0\leq j\leq l} q_j^{s_j} 
\end{align}
with  $r_i$ and $s_j$ in $\NN$ such that $s_j \in  [0 , N_{\max } (q_j)]$ for $1\leq i\leq k$, $0\leq j\leq l$.

%

\subsubsection*{Step 1. Determine $\PP\PP (X,S)$}
According to Lemma \ref{vpmatrice}, denoting by $\textbf{P}$ the set of prime numbers,
$$
\PP\PP (X,S)  = \{ p \in \textbf{P} : p \text{ divides} \gcd ((\ones M^d )_i , 1\leq i \leq d )\} ,
$$
which is clearly computable.

\subsubsection*{Step 2. Determine $\mathbb{P} \mathbb{P}^\infty (X,S)$}
We use the Lemmas \ref{vpmatrice} and \ref{matricepolycar1}.
\begin{itemize}
\item
Compute $r = \max\{i\in \NN : \{\ones, \ones M,\dots \ones ,  M^i\}\text{ is free}\}$.
\item
Compute $\tilde M$ the restriction of $M$ to the vector subspace spanned by $\ones, \ones M, \dots, \ones M^r$.
\item
Determine its characteristic polynomial
$Q(X) = \sum_{i=0}^{r+1}a_i X^i\in \ZZ[X]$.
\item
Compute $\gcd(a_0,\dots,a_r)$. 
Its prime divisors form the set $\mathbb{P} \mathbb{P}^\infty (X,S)$.
We set: 
$$
\mathbb{P} \mathbb{P}^\infty (X,S) = \{ p_1,p_2,\dots, p_k \} .
$$
where the $p_i$'s are pairwise distinct.
\end{itemize}

\subsubsection*{Step 3. Determine the maximal power $q^n$ for $q\in \PP \PP (X,S)\setminus \PP \PP^\infty (X,S)$}
We use Lemma \ref{matricepolycar2}.
Let $Q = \max (\PP \PP (X,S)\setminus \PP \PP^\infty (X,S))$.
\begin{itemize}
\item
Starting from $n=1$ :
\begin{itemize}
\item
Compute $g_n = \gcd   ( (\ones M^{nd})_1, \dots , (\ones M^{nd})_d )$.
\item
Determine $\tilde g_n = \max\{g\in\NN : g|g_n \text{ and } g\wedge p_m = 1  \text{ for } 1\leq m\leq j\}$. 
\end{itemize}
\item
Carry on till $\tilde g_n = \tilde g_{n+1}$. 
According to Lemma \ref{bornematrice}, it will halt in a finite time bounded by $KQ^d$, where $K$ is defined as in Lemma \ref{bornematrice}.

Let $\tilde g_n = q_1^{n_1}\cdots q_l^{n_l}$. 
Then, $\PP \PP (X,S)\setminus \PP \PP^\infty (X,S) = \{ q_1, \dots , q_l\}$ and $n_i = N_{\max } (q_i)$, $1\leq i \leq l$.
\end{itemize}

\subsubsection*{Step 4.  Output of the algorithm}
One obtains $\mathbb{P} (X,S)$ as described in \eqref{align:P(X,S)}.
With this output and Proposition \ref{eigenvalues} we are able to describe all the group of rational eigenvalues associated to the system $(X,S)$: they are the complex numbers $\exp(2i\pi q/p)$ with $q\in \ZZ$ and 
$p \in \mathbb{P} (X,S)$.

\begin{remark}
In the case of a 2-letter alphabet, Host \cite[sec.\ 2.3]{Host1986} established an algorithm that computes $\mathbb{P} (X,S)$ for non constant-length substitution subshifts.
We recall it below.

Let $\sigma$ be a non constant-length substitution. 
Let $M$ be its incidence matrix, with determinant $d$ and trace $T$. 
Then 
\[
\PP(X_\sigma, S) = \{p\in\NN : p \mbox{ divides some } w\times r^n \mbox{ for some } n\in\NN\},
\]
where:
\begin{itemize}
\item
$r = \gcd(d, T)$ ;
\item 
the prime divisors of $w$ are those of $|\sigma(0)|$ and $|\sigma(1)|$ that do not divide $r$,  with the same exponents as in $|\sigma(0)| - |\sigma(1)|$.
\end{itemize}
\end{remark}

\subsection{Algorithm 2: for a given $p$, checking if $x$ has a constant arithmetic subsequence with common difference $p$, when $\sigma$ is left-proper}\label{subsec:algo2}
This corresponds to the decidability of Question 1 for fixed point of primitive substitutions. 
Recall that we consider a non-periodic substitution subshift $(X,S)$ and that the inputs are given by Theorem \ref{decidabilite}.

Let $p$ be an positive integer.

\subsubsection*{Step 1. Determine $\mathbb{P}(X,S)$ as described in Algorithm 1.}
We start determining the greatest divisor  $\tilde{p}$ of $p$ that belongs to $\PP(X,S)$. 
Following the notations given in Algorithm 1, $\tilde p$ is of the form
$\tilde p = p_1^{r_1}\cdots p_k^{r_k}q_1^{s_1}\cdots q_l^{s_l}$
with $r_i\in \NN$, with $1\leq i\leq k$, and, $0\leq \beta_j\leq N_{\max }(q_j) $, with $1\leq j\leq l$.
We can easily observe that the sequence $x$ contains a letter in arithmetic progression with common difference $p$ if and only if it contains a letter in arithmetic progression with common difference $\tilde p$ (see Lemma \ref{lemma:casessential}).

\subsubsection*{Step 2. Check if $\tilde p$ corresponds to a letter in arithmetic progression in $x$.}
There exists, according to Lemma \ref{vpmatrice}, an integer $m_p$ such that $\tilde p$ divides $|\sigma^{m_p}(a)|$ for all $a\in\A$.
Due to Lemma \ref{matricepolycar2}, we can take
$m_p = d \max\{r_1, \dots, r_k, s_1, \dots, s_l\}$.
For every integer $i$ such that $0\leq i\leq \tilde p-1$,
check if $(n,a) \mapsto (\sigma^{m_p}(a))_{i+n\tilde p}$ is constant on $\{ (n,a) :  a\in \A,  0\leq i+n\tilde p\leq |\sigma^{m_p}(a)|-1\}$.
If such an integer $i$ exists, then $x$ has a letter in arithmetic progression with common difference $\tilde p$ (and thus, also with common difference $p$) starting at index $i$. If not, it does not exist any letter in arithmetic progression with common difference $\tilde p$ (and thus, neither of common difference $p$).

\medskip

\textbf{Example.}
Let $\sigma$ be the left-proper primitive substitution defined on the alphabet $\A = \{0,1\}$ by $0\mapsto 01$ and $1 \mapsto 0110$.
Its incidence matrix is 
\[
M = \left(
\begin{array}{c c}
1&2\\
1&2
\end{array}
\right)
.
\]

We can check that the unique fixed point $x\in \A^\ZZ$ of $\sigma$ is not periodic.
As $\sigma$ is left-proper and $|\A| = 2$, it suffices to compute 
$M^2 = 
\left(
\begin{array}{c c}
3 & 6\\
3 & 6
\end{array}
\right)$, which gives $\ones M^2 = (6 , 12)$. Since $\gcd(6, 12) = 6$, the prime integers in $\PP(X_\sigma,S)$ are $2$ and $3$.
Following Algorithm 1, we establish that $\PP(X_\sigma,S) = \{3^m , 2\times 3^m : m\in\NN\}$.

Remark that $\sigma(1)$ has no arithmetic progression with common difference $2$. 
As $|\sigma(0)|$ and $|\sigma(1)|$ are divisible by 2, it follows that $x$ does not have any letter in arithmetic progression with common period $2$.

Using the same method (Step 2 of Algorithm 2) with 
$$\sigma^2 : \left\{
\begin{array}{l}
0\to 010110\\
1\to 010110011001
\end{array} \right.
$$
we see that $x$ does not contains any letter in arithmetic progression with common difference $3$, but it admits arithmetic progressions with common difference $6$: 
$x_{6n} = 0$ and $x_{6n+1} = 1$  for all $n\in\ZZ$.

\subsection{Algorithm 3: for a given $p$, checking if $y$ has a constant arithmetic subsequence with common difference $p$, general case}\label{subsec:algo3}
We recall that the general case is that $y$ is a uniformly recurrent morphic sequence. 
Thus the inputs are an endomorphism $\sigma : \A^* \to \A^*$, a morphism $\phi : \A^* \to \B^*$, $x\in \A^\ZZ$ an admissible fixed point of $\sigma$ with $y=\phi (x)$.
Let $(X,S)$ and $(Y,S)$ be the respective subshifts these sequences generate. 

Observe that even if $y$ is uniformly recurrent, $\sigma $ is not necessarily primitive, it can even have erasing letters, and $\phi $ is not necessarily a coding.
Nevertheless one can find a primitive substitution $\sigma $, with an admissible fixed point $x'$, and a coding $\phi '$ such that $y= \phi ' (x')$.
Moreover, this can be done algorithmically.

\begin{theo}\label{thm:morphicsubstitutive}
\cite{Durand:2013b}
Uniformly recurrent morphic sequences are primitive substitutive sequences.
\end{theo}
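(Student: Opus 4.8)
The plan is to reduce the general morphic sequence to a tractable normal form and then exploit uniform recurrence through the theory of return words. First I would normalize the pair $(\sigma , \phi )$. The endomorphism $\sigma$ may have erasing letters and $\phi$ may fail to be a coding; however it is a classical fact that every morphic sequence can be rewritten as $y = \psi (z)$, where $z = \rho^\infty (b)$ is a fixed point of a \emph{non-erasing} endomorphism $\rho : \C^* \to \C^*$ and $\psi : \C^* \to \B^*$ is a coding. Erasing letters are removed by a standard block-recoding on an enlarged alphabet, and $\phi$ is absorbed into the coding after grouping the letters of $x$ into blocks of bounded length. This step is purely combinatorial and algorithmic; it does not use uniform recurrence, only that $y$ is infinite.

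Next I would isolate the letters of $\C$ that genuinely contribute to $y$. Write $\C = \C_g \sqcup \C_b$, where $\C_g$ is the set of growing letters (those $c$ with $\lim_n |\rho^n (c)| = +\infty$) and $\C_b$ the bounded ones. Since $z$ is a fixed point of $\rho$ and $y$ is infinite, the asymptotic structure of $z$ is governed by the restriction of $\rho$ to the letters appearing infinitely often. The incidence matrix of this restriction need not be primitive, but the uniform recurrence of $y$ forces strong constraints: every factor of $y$ reappears with bounded gaps, which prevents a growing letter from being confined to the images of only some letters. The delicate point is Pansiot's growth dichotomy \cite{Pansiot:1986}, namely that growing letters expand either polynomially or exponentially; a mixture of growth rates generically destroys uniform recurrence, so I would argue that uniform recurrence of $y$ collapses the behaviour to a single regime and confines the relevant dynamics to a strongly connected, hence primitive, subalphabet.

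The main engine is the characterization of primitive substitutive sequences by return words \cite{Durand:1998}: a uniformly recurrent sequence is primitive substitutive if and only if it has only finitely many derived sequences, that is, the sequences obtained by coding $y$ over its set of return words to a prefix, as the prefix varies. I would therefore fix a prefix $w$ of $y$; uniform recurrence guarantees that the set of return words to $w$ is finite and that $y$ decomposes uniquely as a concatenation of these return words, producing a derived sequence $\mathcal{D}_w (y)$. The morphic structure $y = \psi (\rho^\infty (b))$ lets me push the self-similarity of $z$ down to the derived level: images under $\rho$ of return words are bounded concatenations of return words, so each derived sequence is again morphic with respect to a morphism whose incidence matrix has bounded dimension and bounded entries. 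There being only finitely many such matrices and codings, the family of derived sequences is finite, and the return-word characterization then yields a primitive substitution $\tau$ and a coding for which $y$ is substitutive, which is the claim.

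The hard part will be the bookkeeping in the second step: controlling how return words to $w$ behave under $\rho$ when $\rho$ is non-primitive and carries letters of differing growth. Bounded letters can fragment return words, and the transient, non-recurrent prefix of $z$ must be discarded without altering the asymptotic language $\L (y)$. Making precise that the derived morphisms have uniformly bounded size, so that only finitely many derived sequences arise, is where uniform recurrence has to be used most carefully, and it is the technical heart on which the whole reduction rests.
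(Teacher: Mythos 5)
The paper does not prove this theorem: it is imported verbatim from \cite{Durand:2013b}, whose proof (Section 5.1 of that reference) builds, via return words, an explicit primitive and left-proper substitution, the crux being a \emph{uniform} quantitative control of the gaps between occurrences of factors — in effect, that a uniformly recurrent morphic sequence is linearly recurrent — from which the finiteness of the set of derived sequences and then the characterization of \cite{Durand:1998} follow. Your third step correctly identifies that characterization as the right engine, and your first step (Cobham's normalization to a non-erasing morphism composed with a coding) is classical and sound. But there are two genuine gaps. The first is your second step: from the uniform recurrence of $y=\psi(z)$ you cannot infer that the dynamics of $z$ are "confined to a strongly connected, hence primitive, subalphabet". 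A coding can collapse a non-uniformly-recurrent fixed point onto a uniformly recurrent image: take $\rho(a)=ab$, $\rho(b)=b$, so $z=\rho^\infty(a)=ab^\omega$ is not uniformly recurrent (the letter $a$ occurs once), yet with $\psi(a)=\psi(b)=c$ the image $y=c^\omega$ is uniformly recurrent. So uniform recurrence of $y$ imposes no such constraint on $z$, the appeal to Pansiot's growth dichotomy is a heuristic rather than an argument ("generically destroys uniform recurrence" is not a proof, and mixed growth with uniformly recurrent image does occur, as above), and even granting strong connectivity you would only get an irreducible, not primitive, incidence matrix. Note also a structural confusion: if this step succeeded in replacing $\rho$ by a primitive substitution whose coded fixed point is $y$, the theorem would already be proved and your return-word step would be redundant.

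The second and more serious gap is that the finiteness of the family of derived sequences — the entire content of the theorem, by \cite{Durand:1998} — is asserted, not proved. Your claim that the derived morphisms have "bounded dimension and bounded entries" means precisely that the number of return words to a prefix $w$ of $y$ is bounded uniformly over all prefixes $w$. For a general uniformly recurrent sequence no such bound exists (this boundedness is essentially equivalent to linear recurrence), so it must be extracted quantitatively from the morphic structure; that extraction is exactly the technical heart of the proof in \cite{Durand:2013b} and is absent from your sketch. Likewise, "images under $\rho$ of return words are bounded concatenations of return words" does not make immediate sense, since return words are defined on $y$ while $\rho$ acts on $z$, and since $z$ need not be uniformly recurrent its own return-word structure is unavailable; one must set up the induction on $\psi\circ\rho^n$ with compatibly chosen prefixes and control the constants there. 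You correctly flag this bookkeeping as "the technical heart", but as it stands the proposal assumes the uniform bound it is supposed to establish, so the argument does not close.
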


The fact that this can be done algorithmically can be easily deduced from the proof of this theorem \cite[Section 5.1]{Durand:2013b} with the additional property that the substitution can be chosen left-proper.

Consequently, we suppose, without loss of generality, that $\sigma$ is a left-proper primitive substitution and $\phi$ is a coding. 
In addition, and again without loss of generality, we suppose $\phi : (X,S) \to (Y,S)$ is an isomorphism \cite[Prop.\ 31]{Durand&Host&Skau:1999,Durand:2000}, and thus that $\PP (Y,S) = \PP (X,S)$.

Then, to check whether, for a given $p>1$, there exists a constant arithmetic subsequence $(y_{k+np})_n$ equals to some letter $a$, it is necessary and sufficient to check whether $(x_{k+np})_n$ is a sequence on the alphabet $\phi^{-1} (\{ a \} )$.

Let us translate this into an algorithm.
Let $p\geq 1$.

\subsubsection*{Step 1.} 
Following the algorithms in  \cite{Durand:2013b} and \cite{Durand:2000}, we compute
\begin{itemize}
\item
a left-proper primitive substitution $\zeta:\B^* \to\B^*$ with fixed point $z$,
\item 
a coding $\psi :\B^*\to\A^*$ such that $\psi(z)=y$.
\end{itemize}
Let $(Z,S)$ be the subshift generated by $\zeta$.
The factor map $\psi : (Z,S) \to (Y,S)$ is an isomorphism \cite{Durand&Host&Skau:1999,Durand:2000}.

\subsubsection*{Step 2.} 
Apply Algorithm 1 to $\zeta$ to determine $\PP(Y,S) = \PP(Z,S)$.
Choose an integer $\tilde p \in \PP(Y,S)$ as in Step 1 of Algorithm 2. 

\subsubsection*{Step 3. Check if there exists a subsequence $(z_{k+n\tilde p})_n$ defined on an alphabet $\phi^{-1} (\{ a \} )$, $a\in \A$.}
We proceed as in the Step 2 of Algorithm 2 except that one has to check whether one of the maps $(n,b) \mapsto (\zeta^{m_p}(b))_{k+n\tilde p}$ has all its images in some $\phi^{-1} (\{ a \} )$, $a\in \A$.

If it is case for some $i$ and $a\in \A$, then $\tilde p$, and thus $p$, corresponds to a constant arithmetic subsequence of $y$. 
Otherwise, it is not. 

\medskip

\textbf{Example.}
Let $\sigma$ be the substitution defined on the alphabet $\A = \{0,1,2\}$ by 
$0\to 02$, $1\to 2$ and $2\to 10$.
Following the algorithm of Durand \cite[proof of Prop.\ 31]{Durand:2000}, we find that $(X_\sigma , S)$ is isomorphic to $(X_\tau , S)$ where $\tau$ is the left-proper substitution defined by
$1\to 6134242, 2\mapsto 61342426134242$, $3\mapsto 6134261356135$, 
$4\mapsto 613426135$, $5\mapsto 6134261356135$, $6\mapsto 613426135$, whose incidence matrix is
$$
M_\tau =
\begin{pmatrix}
1& 2& 3& 2& 3& 2\\
2& 4& 1& 1& 1& 1\\
1& 2& 3& 2& 3& 2\\
2& 4& 1& 1& 1& 1\\
0& 0& 2& 1& 2& 1\\
1& 2& 3& 2& 3& 2\\
\end{pmatrix}.
$$
If we compute the sum of each column 
of $M_\tau^6$ (to apply algorithm 1), 
we obtain the vector
$(930072, 1860144, 1675961, 1159797, 1675961, 1159797)$
whose entries greater common divisor is equal to $1$. Thus, the substitution $\sigma$ does not admit any non-trivial rational eigenvalue. As a consequence, no sequence of $X_\sigma $ has a constant arithmetic subsequence.

\section{The case of uniformly recurrent automatic sequences}
\label{sec:constantlength}

In Section \ref{sec:eigandsubseq} we defined automatic sequences.
We recall below that these sequences correspond exactly to definable sets in well-chosen extensions of Presburger arithmetic, Section \ref{subsec:autopresb}.
Then, we aim to show that for such framework Question 1 is decidable even in the non primitive case.
The proof is immediate once we recall classical results on Presburger arithmetic.
Then using the characterization of the eigenvalues of minimal subshifts generated by constant-length substitutions given by Dekking \cite{Dekking1978}, 
we answer positively to Question 2 and Question 3 for uniformly recurrent automatic sequences.

\subsection{Automatic sequences and Presburger arithmetic}
\label{subsec:autopresb}

The {\em Presburger arithmetic} \cite{presburger1929,presburger1991} on $\NN=\{ 0,1, \dots \}$ is the first order logical structure $\langle\NN, +\rangle$, on $\ZZ$ it is $\langle\ZZ , \geq  , +\rangle$.
That is the set of formulas without free variables composed with elements of $\KK = \NN$ or $\ZZ$, variables taking values in $\KK$, the addition, the equality,
\begin{itemize}
\item the connectives $\vee$ (or), $\wedge$ (and), $\neg$ (not), $\rightarrow$ (then), $\leftrightarrow$ (iff), and,
\item the quantifiers: $\forall$ (for all), $\exists$ (there exists),
\end{itemize}
We refer the reader to Rigo's book \cite{Rigo2014} for more details.
Observe that the order relation $\geq $ is definable in $\langle\NN, +\rangle$ noticing that $n\geq m$ if and only if there $k\in \NN$ such that $n = m+k$.
Thus, $\langle\NN, +\rangle = \langle\NN, \geq , +\rangle$.

A subset $E\subset\KK$ is {\em definable} if there exists a map $\phi $ from $\KK$ to the set of formula over $\langle\KK, \geq , + \rangle$ such that $E$ is the set of numbers $n\in \KK$ such that $\phi(n)$ is true.
It is well-known that the definable sets are the finite unions of arithmetic progressions and that this logical structure is decidable. 
This means that given a formula there exists an algorithm answering in finite time whether this sentence is true or false.

Fix $l\geq 2$ an integer. 
We define the function $V_l$ on $\KK$ as $V_l(0) = 1$ and $V_l(n) = l^i$ where $l^i$ is the largest power of $l$ dividing $n$ (e.g. $V_2(12) = 4$).
We consider the first-order logical structure  $\langle \KK, \geq , +, V_l\rangle$.
It is an extension of the Presburger arithmetic. 
We define formulas and $l$-definable sets as we did before. 
This structure is again decidable \cite[Theorem 4.1 and Corollary 6.2]{Bruyere&Hansel&Michaux&Villemaire:1994}. 
We say $x\in \A^\KK$ is $l$-definable if  for all $a\in \A$ there exist a formula $\phi_a$ defining the set $\{n\in\KK : x_n =a\}$ in the theory $\langle\KK, \geq , +, V_l\rangle$. 
We say the formulas $(\phi_a )_{a\in \A}$ define $x$.

\begin{theo} \label{equivPresburger}
Let $l\in\NN$ and $x\in \A^\KK$.
Then the following properties are equivalent:
\begin{enumerate}
\item
\label{equivPresbusger:2}
$x$ is $l-$automatic ;
\item
\label{equivPresbusger:3}
$x$ is $l-$definable.
\end{enumerate}
\end{theo}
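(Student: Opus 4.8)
The plan is to factor the equivalence through the notion of $l$-recognizability and then to invoke the Büchi–Bruyère characterization of recognizable sets. For each letter $a\in \A$ write $E_a = \{ n\in \KK : x_n = a\}$; the family $(E_a)_{a\in \A}$ is a partition of $\KK$. I would first use the standard fact, which is essentially a restatement of the definition of automaticity, that $x$ is $l$-automatic if and only if each fiber $E_a$ is $l$-recognizable, meaning that the language of base-$l$ expansions of the elements of $E_a$ is accepted by a finite automaton. One direction is immediate: a deterministic finite automaton with output computing $x$ yields, for each $a$, an automaton recognizing $E_a$ by declaring accepting exactly those states whose output is $a$. For the converse, given automata recognizing the finitely many sets $E_a$, the usual product construction produces a single deterministic automaton with output that reads the base-$l$ expansion of $n$ and outputs the unique $a$ with $n\in E_a$; since the $E_a$ partition $\KK$, this output is well defined.

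Next I would apply the theorem of Bruyère, Hansel, Michaux and Villemaire \cite{Bruyere&Hansel&Michaux&Villemaire:1994}: a subset $E\subseteq \KK$ is $l$-recognizable if and only if it is definable in the first-order structure $\langle \KK, \geq, +, V_l\rangle$. Granting this, the two implications of the theorem follow at once. If $x$ is $l$-automatic, each $E_a$ is recognizable, hence definable by some formula $\phi_a$, so $x$ is $l$-definable. Conversely, if $x$ is $l$-definable with defining formulas $(\phi_a)_{a\in \A}$, then each $E_a$ is definable, hence recognizable, and the product construction above produces an $l$-automaton computing $x$.

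The genuine content of the argument is entirely carried by the cited recognizability–definability equivalence, whose proof (Büchi's theorem for $\langle \NN, +, V_l\rangle$ together with its refinements) is the deep part and which here is used as a black box. The one point requiring care is the index set: the above is written for $\KK = \NN$, where base-$l$ expansion is standard, whereas for $\KK = \ZZ$ one must fix a representation of negative integers. I would handle the two-sided case by treating the nonnegative and negative indices separately, equivalently by encoding $n\in \ZZ$ as its sign together with the base-$l$ expansion of $|n|$, and by checking that both the automaton model and the logical structure $\langle \ZZ, \geq, +, V_l\rangle$ are compatible with this encoding, so that recognizability and definability transfer between the one-sided and two-sided settings. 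This bookkeeping is the only real obstacle, and it is routine once the encoding is fixed.
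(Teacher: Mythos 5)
Your proposal is correct and is essentially the same argument the paper relies on: the paper states Theorem \ref{equivPresburger} without proof, as the classical recognizability--definability equivalence of B\"uchi--Bruy\`ere type, and your factorization through the fibers $E_a = \{n \in \KK : x_n = a\}$ together with the theorem of \cite{Bruyere&Hansel&Michaux&Villemaire:1994} is precisely the standard derivation behind that citation, including the sign-plus-magnitude treatment of $\KK = \ZZ$, which that reference covers.

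One point needs correcting, though it does not break the proof. You describe the step ``$x$ is $l$-automatic if and only if each fiber $E_a$ is $l$-recognizable'' as essentially a restatement of the definition. In this paper it is not: $l$-automaticity is \emph{defined} (Sections 3 and 6) as being the image under a coding of a fixed point of a constant-length-$l$ substitution, with no automaton in sight. The equivalence between that substitutive definition and the DFAO model reading base-$l$ expansions is Cobham's theorem on uniform tag sequences \cite[Sec.~5]{Cobham:1972} (see also \cite{Allouche&Shallit:2003}), a genuine theorem that must be invoked before your product construction on fiber automata even applies; your argument as written silently identifies the two definitions. Moreover, in the two-sided case the reduction you sketch works because the negative-index half of an admissible fixed point $\sigma^\infty(b.a)$ is the reversal of the one-sided fixed point $\sigma^\infty(b)$, so each half is $l$-automatic in the one-sided sense -- this is the observation that justifies your splitting of $\ZZ$ into nonnegative and negative indices, and it deserves to be said explicitly. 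With the citation of Cobham's theorem added and that remark made, your proof is complete and matches the intended one.
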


In the theory $\langle\KK, \geq , +, V_l\rangle$, the property ``$x$ admits a letter $a$ in arithmetic progression with common difference $p \geq 1$'' is given by the sentence
$$ \exists a, \exists i : \forall k\in \KK,  \phi_a(i+p k) $$
and therefore is decidable, for any given $p\in\KK$.
Observe that multiplication by a constant, here $p$, is definable in the Presburger arithmetic, that is, once some $p\in \mathbb{N}$ is fixed, 
the set $S = \{ n : \exists k\in \mathbb{Z}, n = kp\}$ is definable where $kp$ is the abbreviation for $k+\cdots + k$ ($p$ times).
Whereas the multiplication of natural numbers is not definable in $\langle\ZZ, \geq , +, V_l\rangle$ \cite{Bes2001}.
That is, the set $\{ (x,y,n)\in \mathbb{K}^3 : xy=n \}$ is not definable in $\langle\KK, \geq , +, V_l\rangle$.
If it was, then $\langle\KK, \geq , +, V_l\rangle$ would include the Peano arithmetic which is known to be undecidable.

Now we suppose $x$ belongs to $\A^\KK$ and is $l$-automatic. 
Then, given some $p\geq 1$, the property ``$x$ admits some letter $a$ in arithmetic progression with common difference $p \geq 1$'' is given by the following formulas indexed by $a$: 
$$ 
 \exists i : \forall k\in \NN,  \phi_a(i+p k). 
$$

We proved the following theorem. 

\begin{prop}
Question  1 is decidable for automatic sequences. 
\end{prop}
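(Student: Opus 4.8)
The plan is to reduce Question~1 for an automatic sequence to the decidability of the extended Presburger structure $\langle\KK, \geq , +, V_l\rangle$, exactly along the lines sketched before the statement. First I would invoke Theorem~\ref{equivPresburger}: since $x$ is $l$-automatic, it is $l$-definable, so I may fix, for each letter $a\in\A$, a formula $\phi_a$ of $\langle\KK, \geq , +, V_l\rangle$ defining the set $\{n\in\KK : x_n = a\}$. These formulas are obtained effectively from the constant-length substitution and the coding that present $x$, so they are available as explicit inputs to the decision procedure.

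Next, for the fixed common difference $p\geq 1$, I would write down, for each $a\in\A$, the sentence
$$
\exists i : \forall k\in\KK,\ \phi_a(i + pk),
$$
and take the (finite) disjunction over $a\in\A$. This asserts precisely that $x$ admits a constant arithmetic subsequence of common difference $p$; note that the ``$\exists a$'' is a genuine finite disjunction over the alphabet, not a quantifier over $\KK$, so each disjunct is a legitimate sentence of the structure. The only point requiring care is that the term $pk$ must be a well-formed term of the language: here $p$ is a \emph{fixed} integer rather than a quantified variable, so $pk$ abbreviates the $p$-fold sum $k+\cdots+k$ and is therefore already definable in Presburger arithmetic.

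This last observation is the crux of the argument, and it is exactly where the restriction to a given $p$ is essential: were $p$ allowed to range as a variable, one would be invoking genuine multiplication $\{(x,y,n): xy=n\}$, which is \emph{not} definable in $\langle\KK, \geq , +, V_l\rangle$, as recalled above. Finally, since the theory $\langle\KK, \geq , +, V_l\rangle$ is decidable \cite{Bruyere&Hansel&Michaux&Villemaire:1994}, there is an algorithm that determines in finite time whether the displayed disjunction holds, and this answers Question~1. I expect no real obstacle beyond the bookkeeping of making the formulas $\phi_a$ effectively available; the decidability of the structure then does all the work, which is why the proof is essentially immediate.
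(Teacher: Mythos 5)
Your proof is correct and takes essentially the same route as the paper: invoke Theorem \ref{equivPresburger} to obtain the defining formulas $\phi_a$, express the property for the fixed $p$ by the sentences $\exists i : \forall k\in\KK,\ \phi_a(i+pk)$ (with $pk$ an abbreviation for the $p$-fold sum), and conclude by the decidability of $\langle\KK, \geq, +, V_l\rangle$. Your observations that the ``$\exists a$'' is a finite disjunction over the alphabet and that fixing $p$ is exactly what avoids undefinable multiplication of two variables match the paper's own remarks precisely.
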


Observe that Question 2 can be described by the following formula:
$$ 
\exists p, \exists i : \forall k\in \KK,  \phi_a(i+p k) \wedge \psi_a(-i+p (k+1)) .
$$
But here this formula uses the multiplication of two variables and thus it is not a formula from $\langle\KK , \geq , +, V_l\rangle$.
Hence we cannot conclude directly the decidability of this question.
It is possible that this statement could be rewritten into a formula from $\langle\KK , \geq , +, V_l\rangle$ but we did not find it.

\subsection{Heights and eigenvalues of minimal constant length substitutions subshifts}\label{section:height}
We recall some well-known results on minimal constant length substitution subshifts.

Let us consider a primitive constant-length substitution $\sigma$ defined on the alphabet $\A$. 
We set $d=\card \mathcal{A}$.
We denote $l$ its length and $h$ its height (possibly equal to 1). Let $x\in \A^\ZZ$ be one of its admissible fixed points and $(X,S)$  the minimal subshift generated by $\sigma$.
It is decidable to know whether $x$ is periodic or not \cite{Honkala:1986,Allouche&Rampersad&Shallit:2009}.
As the periodic case has been studied in Section \ref{section:periodic}, we suppose for the sequel that $(X,S)$ is non-periodic.

In that case, we recall (Theorem \ref{theo:dekking}) that the periodic spectrum is
\begin{align}
\label{align:dekkingspectrum}
\PP(X,S) = \{ \mbox{divisors of some } h\times l^m, \mbox{ with } m\in\NN\} .
\end{align}

Let us recall some well-known properties of the height.
Let 
\[
g_k = \gcd\{n\geq 1, x_{k+n}=x_{k}\} .
\]
Then one has the following property \cite[Rk. II.9 (ii)]{Dekking1978} 

\begin{align*}
\label{eq:dekking}
h =  & \max \{n\geq 1 : (n, |\sigma |) =1, n \text{ divides } g_k\} \\
= & \max \{n\geq 1 : (n, |\sigma |) =1, n \text{ divides } g_0\}  .
\end{align*}

For all $i\in\NN$ we set $\mathcal{A}_i = \mathcal{A}_i (x) =  \{ x_{i+nh} : n\in \mathbb{Z} \}$.

\begin{prop}\cite[Rk. II.9 (ii)]{Dekking1978}
\label{prop:hpart}
The height $h$ of $\sigma $ is algorithmically computable. 
Moreover $(\mathcal{A}_i )_{0\leq i < h}$ is a partition of $\mathcal{A}$ and it is algorithmically computable.
\end{prop}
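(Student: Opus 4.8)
The plan is to reduce both assertions to the existence of a \emph{coloring} $c : \A \to \ZZ/h\ZZ$ satisfying $c(x_n) \equiv n \pmod h$ for every $n$, since such a $c$ packages exactly the information we need: the classes $\A_i = c^{-1}(i)$ are the sets $\mathcal{A}_i$, and the coloring can be searched for effectively. First I would record the partition property. By definition $h$ is coprime to $l = |\sigma|$ and divides $g_0 = \gcd\{ n \geq 1 : x_n = x_0 \}$, so $h$ divides $n$ whenever $x_n = x_0$; thus every occurrence of the letter $x_0$ lies in the residue class $0$ modulo $h$. The self-similarity $\sigma \circ S = S^l \circ \sigma$ of a constant-length substitution propagates this to all letters: if $x_n = a$, then the block $\sigma(a)$ occurs in $x$ at position $nl$, so, writing $\sigma(a) = b_0 b_1 \cdots b_{l-1}$, the position of $b_j$ is $nl + j$. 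Hence the residues must obey the compatibility relation
\begin{equation}
\label{eq:colorcompat}
c(b_j) \equiv l\, c(a) + j \pmod h, \qquad 0 \leq j < l .
\end{equation}
That a consistent $c$ on all of $\A$ actually exists is Dekking's height partition \cite[Rk.\ II.9]{Dekking1978}; granting it, each letter occurs only at positions of a single residue class, i.e.\ $\A_i = c^{-1}(i)$. As $n$ ranges over all of $\ZZ$, every class of $\ZZ/h\ZZ$ is met, so each $\mathcal{A}_i$ is non-empty, and the $\mathcal{A}_i$ being pairwise disjoint we conclude that $(\mathcal{A}_i)_{0 \leq i < h}$ is a partition of $\A$ and, in particular, $h \leq d$.

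For the computability claims I would turn \eqref{eq:colorcompat} into a finite test. Since $(h,l)=1$ and $l$ is then invertible modulo $h$, relation \eqref{eq:colorcompat} propagates the coloring along applications of $\sigma$, and by primitivity every letter of $\A$ appears in some $\sigma^k(x_0)$. Concretely, for each candidate $n \geq 1$ with $(n,l)=1$ and $n \leq d$, one fixes $c(x_0)=0$, propagates through \eqref{eq:colorcompat} with $h$ replaced by $n$ until every letter is colored (which happens after finitely many steps, bounded by primitivity), and then checks that \eqref{eq:colorcompat} holds for every $a \in \A$ and every $0 \leq j < l$; this is a finite verification. If a candidate $n$ passes, the resulting factor onto $(\ZZ/n\ZZ, R)$ forces $n \in \PP(X,S)$, whence by Theorem \ref{theo:dekking} and $(n,l)=1$ we get $n \mid h$, so the largest $n$ passing the test is exactly the height. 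The associated coloring then outputs the partition $\mathcal{A}_i = c^{-1}(i)$. Thus both $h$ and the $\mathcal{A}_i$ are produced by a finite search, proving algorithmic computability.

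The main obstacle is the step I have attributed to Dekking: showing that the residue modulo $h$ is genuinely a function of the \emph{letter} and not merely of the position, equivalently that a consistent solution $c$ of \eqref{eq:colorcompat} exists. The conceptual reason is that $h \in \PP(X,S)$ (it divides $h\cdot l^0$), so by Proposition \ref{eigenvalues} there is a factor map $\pi : (X,S) \to (\ZZ/h\ZZ, R)$, unique up to cyclic permutation by the remark following that proposition. Combining $\pi(Sz) = \pi(z)+1$ with the identity $\pi \circ \sigma = l\, \pi + c$ for some constant $c$ — itself a consequence of $\sigma\circ S = S^l\circ\sigma$ together with the coprimality $(h,l)=1$, which makes $l^{-1}(\pi\circ\sigma)$ a factor map of the same type as $\pi$ — one isolates that $\pi$ depends only on the $0$-th coordinate, which is precisely the coloring $c$. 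Reconciling this coordinate dependence with the value of $h$ coming from $g_0$ is where the real work of \cite{Dekking1978} lies, and it is the one ingredient I would import rather than reprove.
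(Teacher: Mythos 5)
The paper itself gives no proof of this proposition: it is imported verbatim from Dekking's Remark II.9(ii) of \cite{Dekking1978}, so there is no internal argument to match yours against; what matters is whether your reduction to Dekking is sound, and it is. You correctly isolate, as the single non-effective ingredient, the well-definedness of the coloring (that all occurrences of a given letter in $x$ lie in one residue class modulo $h$), which is exactly the content of the cited remark, and around it you build a legitimate finite search using $h\leq d$ and propagation/verification of the relation $c(b_j)\equiv l\,c(a)+j \pmod n$ along $\sigma$. Two remarks on the effective part. First, your appeal to Proposition \ref{eigenvalues} and Theorem \ref{theo:dekking} to conclude $n\mid h$ for a passing candidate is heavier than needed and quietly invokes the standing non-periodicity hypothesis; it can be bypassed: if the verification succeeds for a candidate $n$ with $c(x_0)=0$, then by induction on $m\geq 0$ (write $m=lm'+j$ with $0\leq j<l$ and $m'<m$, and use $x_m=\sigma(x_{m'})_j$) one gets $c(x_m)\equiv m \pmod n$, hence $n$ divides every $m\geq 1$ with $x_m=x_0$, i.e.\ $n\mid g_0$; together with $(n,l)=1$, the very definition of the height gives $n\leq h$, and since the true coloring shows that $h$ itself passes, the largest passing candidate is $h$ — no spectral input required. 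Second, the same induction is the missing justification for your unproved assertion that the letterwise check already yields a factor map onto $(\ZZ/n\ZZ,R)$: equivariance demands $c(z_1)=c(z_0)+1$ across block junctions, which does not follow formally from the compatibility relation alone (note, e.g., that at negative positions the relation only forces $(l-1)(c(x_{-1})+1)\equiv 0 \pmod n$), but it does follow from $c(x_m)\equiv m$ for $m\geq 0$, since every two-letter word of the language occurs at a nonnegative position of $x$. With these two lines made explicit, your proof is complete modulo exactly the same external input that the paper's citation covers.
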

Of course, $i\equiv j \mod h$ if and only if $\A_i = \A_j$. 
Observe that if $h=1$ then this partition is reduced to the whole alphabet $\A$. 
For the other extremal situation, $h= d$ implies $x$ is periodic.

\subsection{Periods in primitive automatic subshifts}\label{section:heightauto}
Let $y\in \mathcal{B}^\mathbb{Z}$ be a primitive automatic sequence, that is $y = \phi (x)$ where $\sigma$ is  a primitive constant-length substitution, $x\in \mathcal{A}^\mathbb{Z}$ one of its admissible fixed points and $\phi$ a coding.

Consider $(X,S)$ and $(Y,S)$ the minimal subshifts defined by $x$ and $y$ respectively.
The subshift $(Y,S)$ is clearly a factor of $(X,S)$ as $\phi $ defines a factor map from $(X,S)$ onto $(Y,S)$.    
Consequently the set $\mathbb{P} (Y,S) $ is included in $\mathbb{P} (X,S) $.

To answer Question 2, it is sufficient to answer positively Question 3. 
For this purpose, we should algorithmically determine the set ${\rm Per} (y)$.
We recall  it is equal to $\ZZ \PP' ( Y,S )$, Section \ref{sec:eigandsubseq}.
Thus, to answer Question 3 it is sufficient to determine $\PP' ( Y,S ) $ which is included in $\mathbb{P} (Y,S) $ and thus in  $\mathbb{P} (X,S) $.

We proceed as follows. 
We determine the alphabet consisting of the letters occurring in every possible arithmetic subsequences with common difference of the form $h\times l^m, m\in\NN$. 
Due to the regularity of the construction (inherited by the constant-length of the substitutions we are dealing with)  described below, all these alphabets can be represented as vertices of a directed edge-labelled graph $G = (V,E)$ and the sequence $y$ admits a constant arithmetic subsequence if and only if one of those alphabets equals to $\{b\}$, $b\in\B$.

\subsection{A graph to describe the sets  $\mathbb{P}' (X,S)$ and $\mathbb{P}' (Y,S)$}

We keep the assumptions and notations of Section \ref{section:height} and Section \ref{section:heightauto}, in particular the partition of $\A$ into the subsets $\A_i$ given by Proposition \ref{prop:hpart} for $\sigma$.

We define a graph, $G (\sigma )$, that will characterize $\mathbb{P}' (X,S)$ and $\mathbb{P}' (Y,S)$.

Let $G' = (V',E')$ be the directed graph where $V'$ is the family of subsets of $\mathcal{A}$ and where $(\mathcal{C} , \mathcal{D})$ is an edge of $E'$ whenever there exists some integer $i$, $0\leq i < l$, such that  
\[
\mathcal{D} = \{\sigma^{d} (b)_{i} : b\in \mathcal{C}   \} .
\]
Moreover we will consider the edges-labelling function $f : E ' \to\{0,\dots, l-1\}$ defined by $f(\C, \mathcal{D}) = i$.
Let $G (\sigma )=(V,E) $ be the subgraph of $G'$ where $V$ is the set of vertices that are reachable from some vertices $\mathcal{A}_i$, $i\in\{0,\dots,h-1\}$.
A {\em walk} of $G(\sigma)$ of {\em length} $i$ is a finite sequence of edges of the type $(\C_1 , \C_2)(\C_2 , \C_3) \dots (\C_{i-1} , \C_i)$.
The vertex $\C_1$ is called the {\em starting} vertex of this walk and $\C_i$ the {\em terminal} vertex. 
The {\em label} of this path is the finite sequence $(f(\C_1 , \C_2) , f(\C_2 , \C_3), \dots , f(\C_{i-1} , \C_i))$.
A walk of $G(\sigma)$ is called \emph{admissible} if it starts from one of the vertices $\A_i, 0\leq i\leq h-1$.

\medskip

\textbf{Example 1.}
Let us consider the substitution defined on the alphabet $\A = \{0,1,2,3\}$ by
$\sigma(0) = 013$,
$\sigma(1) = 102$,
$\sigma(2) = 231$,
$\sigma(3) = 320$.
It has height $h = 2$, which leads to the following partition of $\A$:
$\A_0 = \{0,3\}$,
$\A_1 = \{1,2\}$.

We obtain the following graph:

\begin{center}
\scalebox{0.8}{%
\begin{tikzpicture}[->,>=stealth',shorten >=1pt,auto,node distance=2cm,
                    semithick]
  \tikzstyle{every state}=[draw = black]

  \node[state] (A)   {$\{0,3\}$};
  \node[state] (B) [right of=A] 		 {$\{1,2\}$};
  \node[state, draw = none] (C) [above=-2mm of A] {$\A_0$};
  \node[state, draw = none] (D) [above=-2mm of B] {$\A_1$};
  
  \path (A) edge [bend left]  node {1} (B)
            edge [loop left] node {0,2} (A)
        (B) edge [loop right] node {0,2} (B)
            edge [bend left]  node {1} (A) ;
\end{tikzpicture}}
\end{center}

\textbf{Example 2.}
Now, consider the substitution $\sigma $ defined on the alphabet $\A = \{0,1,2,3\}$ by
$\sigma(0) = 01230$,
$\sigma(1) = 12301$,
$\sigma(2) = 21012$,
$\sigma(3) = 30123$.
It has height $h = 2$, which leads to the following partition of $\A$:
$\A_0 = \{0,2\}$,
$\A_1 = \{1,3\}$.
We obtain the following graph.


\medskip
\begin{center}
\scalebox{0.7}{%
\begin{tikzpicture}[->,>=stealth',shorten >=0pt,auto,node distance=3cm, 
                    semithick]
  \tikzstyle{every state}=[draw = black]

  \node[state] (A) {$\{0,2\}$};
  \node[state] (B) [right of=A] 		 {$\{1,3\}$};
  \node[state] (C) [below of=A] {$\{1\}$} ;
  \node[state] (E) [below of=C] {$\{2\}$} ;
  \node[state] (F) [right of=E] {$\{3\}$} ;
  \node[state] (D) [left of=E] {$\{0\}$} ;
  \node[state, draw = none] (G) [above=-2mm of A] {$\A_0$};
  \node[state, draw = none] (H) [above=-2mm of B] {$\A_1$};

  \path (A) edge [bend left]  node {3} (B)
            edge [loop left] node {0,2,4} (A)
            edge [] node {1} (C)
            
        (B) edge [loop right] node {0,2,4} (B)
            edge [bend left]  node {1,3} (A) 
            
        (C) edge [loop left] node {0,4} (C)
            edge   node[above] {3} (D)
            edge  [bend left] node {1} (E)
            edge  [bend left] node {2} (F)
            
        (D) edge [loop left] node {0,4} (D)
            edge [bend left]  node {1} (C)
            edge  [bend left] node {2} (E)
            edge [bend right=50]  node[below] {3} (F)
            
        (E) edge [loop below] node {0,4} (E)
            edge [bend left]  node {1,3} (C)
            edge [bend left]  node {2} (D)
                        
        (F) edge [loop right] node {0,4} (F)
            edge  node [above]{2} (C)
            edge [bend right =-70]  node {1} (D)
            edge  node {3} (E);
\end{tikzpicture}}
\end{center}


Let us point out some properties of these graphs that will allow us to conclude with Question 3. 

\begin{lemma}
The graph $G (\sigma )$ is algorithmically computable. 
\end{lemma}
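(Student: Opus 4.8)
The plan is to show that $G(\sigma)$ is finite and that both its vertex set and edge set can be produced by a terminating procedure. First I would observe that the ambient graph $G'=(V',E')$ has vertex set equal to the family of all subsets of $\A$, which has exactly $2^{d}$ elements where $d=\card\A$; hence $V'$ is finite and can be listed explicitly. For each pair $(\C,\mathcal D)$ of subsets I would check, for each $i\in\{0,\dots,l-1\}$, whether $\mathcal D=\{\sigma^{d}(b)_i : b\in\C\}$; since $\sigma^{d}$ is a concrete finite word for each letter (computable by iterating the substitution rule $d$ times) and $|\sigma^{d}(b)|=l^{d}$ is known, each such test is a finite computation, and the label $f(\C,\mathcal D)=i$ is recorded in the process. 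This determines $G'$ together with its labelling function entirely algorithmically.

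Next I would pass from $G'$ to the subgraph $G(\sigma)=(V,E)$. By definition $V$ is the set of vertices reachable from the initial vertices $\A_0,\dots,\A_{h-1}$. By Proposition~\ref{prop:hpart} the height $h$ is algorithmically computable and the partition $(\A_i)_{0\le i<h}$ is algorithmically computable, so the starting vertices are available. Reachability in a finite directed graph is decidable by a standard breadth-first (or depth-first) search: starting from the finite set $\{\A_0,\dots,\A_{h-1}\}$, repeatedly add any vertex $\mathcal D$ for which there is an edge $(\C,\mathcal D)\in E'$ with $\C$ already marked, until no new vertex appears. Since $V'$ is finite this loop terminates after at most $2^{d}$ rounds, and it outputs exactly the reachable set $V$. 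Restricting $E'$ and $f$ to the pairs whose endpoints both lie in $V$ yields $E$ and the restricted labelling, completing the construction of $G(\sigma)$.

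The point I would stress is that every ingredient is genuinely effective: the word $\sigma^{d}(b)$ is obtained by a fixed number of rewriting steps, the edge relation is a decidable finite set of equalities between subsets of $\A$, and reachability is computed by a finite search over a finite graph. I do not expect a serious obstacle here; the only thing needing care is the appeal to Proposition~\ref{prop:hpart} for the computability of $h$ and of the partition $(\A_i)_{0\le i<h}$, which determine the designated starting vertices of the reachability search. Once this is granted, finiteness of $V'$ guarantees termination and the lemma follows immediately.
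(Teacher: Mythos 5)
Your proof is correct and takes essentially the same route as the paper, whose entire proof is the one-line observation that the claim is clear from Proposition \ref{prop:hpart} together with the bound $2^d$ on the number of vertices. You merely make explicit the finite edge-checking (computing the words $\sigma^d(b)$ and testing the set equalities for each $i$) and the breadth-first reachability search from the computable starting vertices $\A_0,\dots,\A_{h-1}$, all of which the paper leaves implicit.
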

\begin{proof}
It is clear from Proposition \ref{prop:hpart} and the fact that the number of vertices is bounded by $2 ^d$.
\end{proof}

For a sequence $z$ in $\A^\KK$ we set $\A (z) = \{ z_n : n\in \KK \}$, where $\KK = \NN$ or $\ZZ$.
We call it the \emph{alphabet} of $z$.

Let $m\geq 0$ and $k$ be integers such that $0\leq k < hl^m$.
We will denote by $(k_m, k_{m-1}, \dots, k_1, k_0)$ the expansion of $k$ in base $(l^m, l^{m-1}, \dots , l, 1)$, i.e. $k = \sum_{i=0}^m k_i l^i$ with $0\leq k_m<h$ and $0\leq k_i < l$ for $0\leq i < m$.

\begin{lemma}\label{lemme:periodeschemins}
Let $m\geq 0$ and $k$ be integers such that $0\leq k < hl^m$.
Let $(k_m, k_{m-1}, \dots, k_1, k_0)$ be the expansion of $k$ in base $(l^m, l^{m-1}, \dots , l, 1)$.
Then, the set $\A ((y_{k+nhl^m})_n)$ is 
the image under $\phi$ 
of the terminal edge of the admissible walk 
in the graph $G(\sigma)$ 
starting from the vertex $\A_{k_m}$ with label $(k_{m-1}, \dots, k_1, k_0)$, read from left to right.
\end{lemma}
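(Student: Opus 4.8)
The plan is to prove Lemma \ref{lemme:periodeschemins} by induction on $m$, exploiting the self-similar structure of the fixed point $x$ under $\sigma$, which has constant length $l$. The key structural fact is that for a constant-length substitution, the letter at position $k$ in $x$ is determined by reading the base-$l$ expansion of $k$: concretely, $x_{k} = \sigma(x_{\lfloor k/l \rfloor})_{k \bmod l}$, and iterating this gives $x_k$ as the result of successively extracting digits. I would first establish, as the base case $m=0$, that $\A((y_{k+nh})_n) = \phi(\A_k)$ for $0 \leq k < h$; but this is precisely the definition $\A_k = \{x_{k+nh} : n\in\ZZ\}$ applied through the coding $\phi$, together with $y = \phi(x)$, so the base case reduces to $\A((y_{k+nh})_n) = \phi(\{x_{k+nh}:n\in\ZZ\}) = \phi(\A_k)$.

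For the inductive step I would relate the arithmetic progression with common difference $hl^{m}$ to progressions with common difference $hl^{m-1}$, using the action of $\sigma$. The crucial computation is that applying $\sigma^d$ (the number of letters, matching the definition of edges in $G'$) to a letter and reading position $k_0$ extracts the first digit, reducing the problem from step $m$ to step $m-1$. More precisely, writing $k = k_0 + l k'$ where $k' = \sum_{i=1}^{m} k_i l^{i-1}$ has expansion $(k_m,\dots,k_1)$ in base $(hl^{m-1},\dots,1)$, one shows that the set of letters $\{x_{k+nhl^m} : n\in\ZZ\}$ is obtained from the set $\{x_{k'+nhl^{m-1}} : n\in\ZZ\}$ by applying the operation $\C \mapsto \{\sigma^d(b)_{k_0} : b\in\C\}$. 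This operation is exactly one edge of $G(\sigma)$ labelled $k_0$, so by the induction hypothesis, the terminal vertex of the admissible walk starting at $\A_{k_m}$ with label $(k_{m-1},\dots,k_1)$ gives $\A((x_{k'+nhl^{m-1}})_n)$, and appending the edge labelled $k_0$ yields the terminal edge for the full walk with label $(k_{m-1},\dots,k_1,k_0)$. Applying $\phi$ throughout and invoking $y=\phi(x)$ completes the step.

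The main obstacle will be carefully justifying the digit-extraction identity at the level of sets rather than individual letters, and in particular verifying that applying $\sigma^d$ (as opposed to $\sigma^1$) is the correct operation matching the edge definition of $G'$. The subtlety is that the edge relation uses $\sigma^d(b)_i$, so I must check that the block structure of $x$ under $\sigma^d$ (which has constant length $l^d$) is compatible with extracting a single base-$l$ digit, and that ranging over all $n\in\ZZ$ on the source progression produces exactly the full fiber $\A_{k_m}$ needed to feed the walk. The uniform recurrence and minimality established earlier guarantee that the progression $\{x_{k'+nhl^{m-1}} : n\in\ZZ\}$ genuinely sweeps out the claimed alphabet set, so no letters are spuriously missed; this is where the hypothesis that $\sigma$ is primitive and $(A_i)$ is a genuine partition (Proposition \ref{prop:hpart}) is essential. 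Once these compatibility checks are in place, the induction closes routinely.
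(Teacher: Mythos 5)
Your strategy is exactly the paper's: induction on $m$, base case $m=0$ straight from the definition of the $\A_i$'s, inductive step by peeling off the least significant base-$l$ digit. But the key identity in your inductive step is wrong as written. From $k+nhl^{m} = k_0 + l\,(k'+nhl^{m-1})$ and the fixed-point equation $x=\sigma(x)$, where $\sigma$ has constant length $l$, one gets $x_{k+nhl^{m}} = \sigma(x_{k'+nhl^{m-1}})_{k_0}$: a \emph{single} application of $\sigma$, not of $\sigma^d$. There is no digit-extraction identity for $\sigma^d$ read at an offset $k_0<l$, since $\sigma^d$ has constant length $l^d$ and its blocks in $x$ are aligned to multiples of $l^d$, not of $l$. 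Concretely, take the paper's Example 4 ($0\mapsto 01$, $1\mapsto 20$, $2\mapsto 13$, $3\mapsto 12$, so $h=3$, $l=2$, $d=4$) with $m=1$, $k=2$, i.e.\ $k_1=1$, $k_0=0$: since $x_{1+3n}=1$ for all $n$, one has $x_{2+6n}=\sigma(x_{1+3n})_0=\sigma(1)_0=2$, so $\A((x_{2+6n})_n)=\{2\}$; but $\sigma^4(1)_0=1$, so your operation $\C\mapsto\{\sigma^{d}(b)_{k_0} : b\in\C\}$ sends the vertex $\{1\}=\A_1$ to $\{1\}$, the wrong vertex. The set identity you assert in the inductive step therefore fails, and the induction does not close as written.

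The source of the trouble is that the exponent $d$ in the paper's definition of the edges of $G'$ is an internal inconsistency (a typo): the paper's own proof of the lemma, the graphs drawn in Examples 1 and 4, and the proof of Proposition \ref{prop:etudegraphe} all use edges $\mathcal{D}=\{\sigma(b)_i : b\in\C\}$ with $0\le i<l$. You correctly sensed this tension (``verifying that applying $\sigma^d$ (as opposed to $\sigma^1$) is the correct operation'') but resolved it in the wrong direction, building $\sigma^d$ into the crucial computation instead of correcting the edge relation to $\sigma$. With $\sigma$ in place of $\sigma^d$, your argument coincides with the paper's proof. Two further remarks. First, your appeal to primitivity, minimality and uniform recurrence to guarantee that the progression ``sweeps out'' the claimed alphabet is superfluous: the identity $\A((x_{k+nhl^{m}})_n)=\{\sigma(b)_{k_0} : b\in \A((x_{k'+nhl^{m-1}})_n)\}$ is exact because the same index $n$ parametrizes both progressions, so taking the image of the source set under $b\mapsto \sigma(b)_{k_0}$ loses and adds nothing; the paper uses no recurrence argument here (Proposition \ref{prop:hpart} is needed only for the base case, to know the $\A_i$ are the alphabets of the progressions of difference $h$). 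Second, a small slip: the base for the expansion of $k'$ should be $(l^{m-1},\dots,l,1)$ with leading digit $k_m<h$, not $(hl^{m-1},\dots,1)$.
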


\begin{proof}
We proceed by recurrence on $m$.
If $m=0$, the statement is clear from the definition of the $\A_i$'s.

Suppose that the property holds for some integer $m\geq 0$.

Let $k$ be an integer such that $0\leq k < hl^{m+1}$ and $(k_{m+1}, \dots, k_1,k_0)$ be its representation in base $(l^{m+1}, l^m, \dots , l, 1)$.
Then, $k + nhl^{m+1} = k_0 + l(k_{m+1}l^{m} + k_m l^{m-1} + \cdots + k_1 +n h l^m)$.
Therefore, for all $n$, $y_{k + nhl^{m+1}} = \phi (x_{k + nhl^{m+1}})$ is the $k_0$-th letter of the word 
$\phi (\sigma(x_{k_{m+1}l^{m} + \cdots + k_1 +n h l^m}))$.
By recurrence hypothesis, the alphabet $\A ((y_{k_{m+1}l^{m} + \cdots + k_1 +n h l^m})_n)$ is the 
image under $\phi$ of the
terminal vertex of the admissible walk starting from $\A_{k_{m+1}}$ with label $(k_m,\dots , k_1)$.
By construction of the graph $G(\sigma)$, the alphabet
$\A ((y_{k + nhl^{m+1}})_n)$ 
is the image under $\phi$
of the terminal vertex of the admissible walk starting from $\A_{k_{m+1}}$ with label $(k_m,\dots k_1, k_0)$, which achieves the proof of the claim.
\end{proof}

\textbf{Example 3.}
We consider the substitution $\sigma$ defined in Example 1 above.
Let $\phi$ be the morphism defined by $0\to a, 1\to b, 2\to c$ and $3\to a$.
We apply the morphism $\phi$ to each vertex of the graph $G(\sigma)$, which could be represented by the following labelling of the graph $G(\sigma)$.

\begin{center}
\scalebox{0.8}{%
\begin{tikzpicture}[->,>=stealth',shorten >=1pt,auto,node distance=2cm,
                    semithick]
  \tikzstyle{every state}=[draw = black]

  \node[state] (A)   {$\{a\}$};
  \node[state] (B) [right of=A] 		 {$\{b,c\}$};

  \path (A) edge [bend left]  node {1} (B)
            edge [loop left] node {0,2} (A)
        (B) edge [loop right] node {0,2} (B)
            edge [bend left]  node {1} (A) ;
\end{tikzpicture}}
\end{center}

\begin{remark} \label{rem:caracpa}
As a consequence, the alphabet of any arithmetic subsequence of $y$ with common difference $hl^m$, $m\in\NN$, is 
the image under $\phi$ of
a vertex of the graph $G (\sigma)$.
In particular, there exists a constant arithmetic subsequence $(y_{k+nhl^m})_n$ equals to $a$ if and only if, 
the image under $\phi$ of
the terminal edge of the unique walk starting from vertex $\A_m$ with label $(k_{m-1},\dots, k_1,k_0)$ is $\{a\}$.
\end{remark}

\textbf{Example 1 (continued).}
From Lemma \ref{lemme:periodeschemins}, no one of its four (one-sided) fixed points admit a letter in arithmetic progression.
In fact, the substitution being primitive, it is sufficient to check this for just one of them. 
\medskip

\textbf{Example 2 (continued).}
We observe that every letter of $\A$ appears in $x$ in arithmetic progression. 
In fact, the substitution $\sigma$ being primitive and of constant length, if there exists a letter that appears in arithmetic progression, then every letter of $\A$ would occur in $x$ in arithmetic progression.
From Lemma \ref{lemme:periodeschemins} a period for the letter 1 is $2\times 5 = 10$.
Thus, the essential period should be among 2, 5 or 10.
It cannot be $2$, else either $\A_0$ or $\A_1$ would be $\{1\}$. 
Moreover, from \eqref{align:dekkingspectrum} and \eqref{obs:essper} it is a multiple of $h=2$, so the essential period for the letter $1$ is equal to $10$: 
more precisely, $x_{1+10n} = 1$ for all $n$ and this is the only constant arithmetic subsequence with common difference $10$ in $x$.

\medskip

\textbf{Example 3 (continued).} 
The letter $0$ occurs in $y$ in arithmetic progression with common difference $2$.
Notice that the letters $1$ and $2$ occur in $y$ at the same indices as in $x$, thus they do not appear in arithmetic progression.
As a consequence, the sequence $y$ has only one constant arithmetic subsequence with an essential period: $y_{2n} = 0$ for all $n\in\NN$.

\medskip

Thus from Lemma \ref{lemme:periodeschemins} and the properties of the graph 
$G (\sigma)$,
we are able to answer positively to Question 2 and Question 3 with the following proposition. 

\begin{prop}
\label{prop:grapheindices}
Let $y\in \mathcal{B}^\mathbb{Z}$ be a primitive automatic sequence, that is $y = \phi (x)$ where $\sigma$ is  a primitive constant-length substitution, $x\in \mathcal{A}^\mathbb{Z}$ one of its admissible fixed points and $\phi$ a coding.
Then, $\mathbb{Z}\mathbb{P}' (Y,S) = {\rm Per} (y) $ is the set of integers $hl^m$ such that $\phi (\A ((x_{k+nhl^m})_n))$ is a singleton for some $k$ with $0\leq k <hl^m$. 
Moreover, this set is algorithmically computable.
\end{prop}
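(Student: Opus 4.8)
The plan is to combine Lemma \ref{lemme:periodeschemins}, which gives the structural description of arithmetic subsequences in terms of walks in $G(\sigma)$, with Lemma \ref{lemma:casessential}, which identifies ${\rm Per}(y)$ with $\ZZ\PP'(Y,S)$. The first task is to establish the set-theoretic equality: ${\rm Per}(y)$ equals the set of integers $hl^m$ for which $\phi(\A((x_{k+nhl^m})_n))$ is a singleton for some $0\leq k<hl^m$. First I would show that any period $p\in{\rm Per}(y)$ can be taken of the form $hl^m$. Indeed, by Lemma \ref{lemma:casessential} we have ${\rm Per}(y)=\ZZ\PP'(Y,S)$, so $p$ is a multiple of some essential period $p'\in\PP'(Y,S)\subseteq\PP(Y,S)\subseteq\PP(X,S)$. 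By the Dekking description \eqref{align:dekkingspectrum} every element of $\PP(X,S)$ divides some $h\times l^m$; hence, enlarging the common difference from $p$ to a suitable $hl^m$ of which $p$ is a divisor only refines the arithmetic progression, so a constant subsequence along $p$ forces a constant subsequence along $hl^m$. This reduces the whole question to common differences of the shape $hl^m$.

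Next I would translate the condition ``$y$ admits a constant arithmetic subsequence with common difference $hl^m$'' into the graph language. By definition ${\rm Per}(y)=\{p : \exists b\in\B,\exists k,\ y_{k+np}=b \text{ for all }n\}$, which says precisely that $\A((y_{k+nhl^m})_n)$ is a singleton $\{b\}$ for some $k$. Since $y=\phi(x)$, we have $\A((y_{k+nhl^m})_n)=\phi(\A((x_{k+nhl^m})_n))$, so the membership of $hl^m$ in ${\rm Per}(y)$ is exactly the stated condition. This gives the set-theoretic equality and, via Lemma \ref{lemme:periodeschemins}, identifies the relevant alphabet with (the $\phi$-image of) the terminal vertex of an admissible walk in $G(\sigma)$.

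Finally I would argue the computability claim. By the previous lemma the graph $G(\sigma)$ is algorithmically computable and finite, with at most $2^d$ vertices. A given integer $hl^m$ lies in ${\rm Per}(y)$ iff some admissible walk of length $m$ (starting from one of the finitely many vertices $\A_0,\dots,\A_{h-1}$) ends at a vertex whose $\phi$-image is a singleton; this is a decidable finite search over paths. To obtain the \emph{entire} set ${\rm Per}(y)$, rather than merely test a single $p$, I would use that the set of vertices $\mathcal C$ with $\phi(\mathcal C)$ a singleton is computable, and that reachability of such vertices in the finite graph is exactly captured by a regular language over the alphabet of labels; the valid base-$(l^m,\dots,l,1)$ representations yielding a singleton terminal vertex form a regular set, and one reads off the set of attainable $hl^m$. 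The main obstacle is the bookkeeping in this last step: ${\rm Per}(y)$ is an infinite set (it is $\ZZ\PP'$, closed under taking multiples), so ``algorithmically computable'' must mean producing a finite description—e.g. the finite set $\PP'(Y,S)$ of essential periods, or equivalently the automaton recognizing which $hl^m$ occur—rather than enumerating an infinite list. Making precise that the finiteness of $G(\sigma)$ yields such a finite description, and reconciling it with the multiplicative closure of ${\rm Per}(y)$, is where the care is required; everything else is routine verification left to the reader.
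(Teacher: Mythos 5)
Your proposal is correct and follows essentially the same route as the paper: the paper gives no separate proof, presenting Proposition \ref{prop:grapheindices} as a direct consequence of Lemma \ref{lemme:periodeschemins}, Lemma \ref{lemma:casessential}, the factor inclusion $\PP(Y,S)\subseteq\PP(X,S)$ and Dekking's description \eqref{align:dekkingspectrum}, which is exactly the chain you spell out (modulo replacing ``$hl^m$ of which $p$ is a divisor'' by the essential period $p'$ dividing both $p$ and some $hl^m$). Your closing caveat, that ``algorithmically computable'' must mean a finite description via the singleton vertices of $G(\sigma)$ since ${\rm Per}(y)=\ZZ\PP'(Y,S)$ contains multiples not of the form $hl^m$, matches how the paper implicitly reads its own statement.
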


The following proposition precise the positive answer to Question 2.

\begin{prop}
\label{prop:Q2}
The graph $G(\sigma)$ satisfies the following properties.
\begin{enumerate}
\item The sequence $y$ admits a constant arithmetic subsequence if, and only if, 
there exists a vertex $\C$ of $G(\sigma)$ whose image under $\phi$ is a singleton.
\item The sequence $y$ is periodic if, and only if, every long enough walk in the graph $G(\sigma)$ ends in a 
vertex whose image under $\phi$ is a singleton.
\end{enumerate}
\end{prop}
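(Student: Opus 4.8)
The plan is to reduce both statements to Lemma \ref{lemme:periodeschemins}, which, for every $m\ge 0$ and every $k$ with $0\le k<hl^m$, identifies the alphabet $\A((y_{k+nhl^m})_n)$ with the $\phi$-image of the terminal vertex of the admissible walk of length $m$ that encodes $k$ in base $(l^m,\dots,l,1)$. The preliminary observation I would record is that admissible walks of length $m$ are in bijection with integers $k\in\{0,\dots,hl^m-1\}$: the starting vertex $\A_{k_m}$ records the leading digit $k_m<h$ and the label records the remaining base-$l$ digits $(k_{m-1},\dots,k_0)$, the edge relation being deterministic once a label is fixed. Moreover, by the very definition of $G(\sigma)$, its vertex set $V$ consists exactly of the vertices reachable from some $\A_i$, i.e. of the terminal vertices of admissible walks. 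Hence ``there is a vertex $\C$ with $\phi(\C)$ a singleton'' means the same thing as ``there exist $m,k$ for which $(y_{k+nhl^m})_n$ is constant''.

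For statement (1), the direction ``$\Leftarrow$'' is then immediate: a vertex $\C$ with $\phi(\C)$ a singleton is the terminus of an admissible walk encoding some pair $(k,m)$, and Lemma \ref{lemme:periodeschemins} makes $(y_{k+nhl^m})_n$ constant. For ``$\Rightarrow$'' I would start from a constant subsequence of common difference $p$, so that $p\in{\rm Per}(y)=\ZZ\PP'(Y,S)$, and extract an essential period $p'\in\PP'(Y,S)$ dividing $p$ and realised by a constant subsequence on a single letter $a$. Since $(Y,S)$ is a factor of $(X,S)$ we have $p'\in\PP(Y,S)\subseteq\PP(X,S)$ by Corollary \ref{coro:automatic}, and Dekking's Theorem \ref{theo:dekking} forces $p'\mid hl^{m}$ for some $m$; passing to the sub-subsequence of common difference $hl^m$ keeps it constant equal to $a$, and Lemma \ref{lemme:periodeschemins} identifies its alphabet $\{a\}$ with the $\phi$-image of a vertex. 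Equivalently, one may read this off directly from Proposition \ref{prop:grapheindices}.

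For statement (2) I would first reduce periodicity to periodicity along some $hl^{m_0}$. If $y$ is periodic with minimal period $P_0$, then $(Y,S)$ is a finite factor of $(X,S)$, so $P_0\in\PP(Y,S)\subseteq\PP(X,S)$, and Theorem \ref{theo:dekking} gives $P_0\mid hl^{m_0}$ for some $m_0$; thus $y$ is periodic of period $hl^{m}$ for every $m\ge m_0$, which by Lemma \ref{lemme:periodeschemins} means that every admissible walk of length $\ge m_0$ terminates in a vertex whose $\phi$-image is a singleton. Conversely, if every walk of length $\ge m_0$ does so, then in particular all walks of length exactly $m_0$ do, so each residue $k\in\{0,\dots,hl^{m_0}-1\}$ yields a constant subsequence $(y_{k+nhl^{m_0}})_n$; since each such subsequence is constant, $y_i=y_{i+hl^{m_0}}$ for every $i$, i.e. $y$ is periodic of period $hl^{m_0}$.

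The main obstacle, in both parts, is not the graph bookkeeping, which is a routine base-$(l^m,\dots,l,1)$ unwinding, but the passage from an arbitrary period or common difference $p$ to one of the special form $hl^m$. This is precisely where $(X,S)$ must be assumed non-periodic, so that Dekking's description $\PP(X,S)=\{\text{divisors of some } hl^m\}$ applies, and where the factor inclusion $\PP(Y,S)\subseteq\PP(X,S)$ is used to transfer the constraint from $Y$ back to $X$. Without this, one could not guarantee that walk-length, equivalently a period of the form $hl^m$, already suffices to witness every constant subsequence and every periodic behaviour of $y$.
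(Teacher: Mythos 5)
Your proposal is correct and follows essentially the same route as the paper: the paper leaves Proposition \ref{prop:Q2} as a direct consequence of Lemma \ref{lemme:periodeschemins}, Remark \ref{rem:caracpa} and Proposition \ref{prop:grapheindices}, and its explicit proof of the analogous periodicity statement for $x$ uses exactly your reduction, namely Lemma \ref{lemma:casessential} together with Theorem \ref{theo:dekking} to replace an arbitrary period or common difference by one of the form $hl^m$ before translating into walk language. Your write-up merely makes explicit the factor inclusion $\PP(Y,S)\subseteq\PP(X,S)$ of Corollary \ref{coro:automatic} and the bijection between admissible walks and residues $k\in\{0,\dots,hl^m-1\}$, both of which the paper uses implicitly.
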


\begin{remark}
The quantity $b(\sigma) = \min_{\B\in V}|\B|$ is called the \emph{branching number} \cite{Kamae:1972}. 
The sequence $x$ admits a constant arithmetic subsequence if and only if $b(\sigma) = 1$.
\end{remark}

\subsection{More properties of the graph $G(\sigma )$}

We continue with the notations and assumptions of the two previous sections. 
We will now use the graph $G(\sigma)$ to characterize the set of essential periods of the letters (i.e. of $[a]$ with $a\in \A$).
We need the following proposition.

\begin{prop}\label{prop:etudegraphe}
The graph $G(\sigma)$ satisfies exactly one of the following properties.
\begin{enumerate}
\item \label{aucunepa}
It doesn't contain any singleton.
\item \label{periodique}
Every long enough walk ends in a singleton.
\item \label{panonbornees}
There exists a cycle joining vertices of cardinal $\geq 2$, with one having a singleton in his successors.
\end{enumerate}
\end{prop}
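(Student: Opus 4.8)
The whole argument rests on two elementary monotonicity properties of the edges of $G(\sigma)$. First, if $(\mathcal{C},\mathcal{D})$ is an edge with label $i$, then $\mathcal{D}=\{\sigma(b)_i:b\in\mathcal{C}\}$ is the image of $\mathcal{C}$ under the map $b\mapsto\sigma(b)_i$, so $|\mathcal{D}|\le|\mathcal{C}|$: the cardinality of the vertices is non-increasing along any walk. Second, a singleton is sent to a singleton, so once a walk reaches a singleton vertex it stays among singletons; in this sense the set of singletons is absorbing. Finally, every vertex has $l\ge 2$ outgoing edges (one per label $i\in\{0,\dots,l-1\}$), so every walk can be prolonged indefinitely. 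I would state these three facts first, since each of the three cases is a direct reading of them.

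The plan is then to prove mutual exclusivity and exhaustiveness separately. For mutual exclusivity, note that both \eqref{periodique} and \eqref{panonbornees} require the existence of a singleton vertex, hence are incompatible with \eqref{aucunepa}. To separate \eqref{periodique} from \eqref{panonbornees}, observe that a cycle through vertices of cardinality $\ge 2$ may be traversed any number of times, producing walks of arbitrary length that never meet a singleton (by the absorbing property, a walk that had met one could not return to cardinality $\ge 2$); this contradicts the requirement in \eqref{periodique} that every long enough walk terminate in a singleton. So at most one of the three holds.

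For exhaustiveness, suppose \eqref{aucunepa} fails, that is, some singleton vertex occurs; by Lemma \ref{lemme:periodeschemins} this is the same as saying $x$ possesses a constant arithmetic subsequence. The key point, which I would isolate as a lemma, is that then a singleton is reachable from \emph{every} vertex, call it $(\star)$. I would deduce this from primitivity: the existence of one singleton vertex is exactly a coincidence of the constant-length substitution $\sigma$, and Dekking's column-coincidence argument \cite{Dekking1978} (see also \cite{Queffelec:2010}) shows that primitivity then forces a full coincidence, namely an integer $m_\star$ and a column $P$ with $\sigma^{m_\star}(b)_P=c$ for all $b\in\A$. Reading $P$ in base $l$ yields a label of length $m_\star$ whose walk sends any vertex $\mathcal{C}\subseteq\A$ to $\{\sigma^{m_\star}(b)_P:b\in\mathcal{C}\}=\{c\}$, which is $(\star)$. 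Granting $(\star)$, I would finish with a dichotomy on the finite subgraph $H$ induced by the vertices of cardinality $\ge 2$. If $H$ is acyclic, then, since every vertex has an outgoing edge and $H$ is a finite directed acyclic graph, no walk can stay in $H$ for more than $|H|\le 2^{d}$ steps; hence every walk of length exceeding $2^{d}$ has left $H$, entered the singletons, and remains there, which is \eqref{periodique}. If $H$ contains a cycle, that cycle joins vertices of cardinality $\ge 2$, and by $(\star)$ any of its vertices reaches a singleton, which is exactly \eqref{panonbornees}.

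The elementary part, namely the two monotonicity facts, the finite acyclic bound, and the incompatibilities, is routine. The one genuinely substantial step is $(\star)$: turning the existence of a single singleton vertex into reachability of a singleton from \emph{every} vertex. This is where primitivity is indispensable and where I would lean on the coincidence theory of primitive constant-length substitutions; without it one could imagine a cycle in $H$ sitting in a part of the graph disjoint from the singletons, and the trichotomy would break.
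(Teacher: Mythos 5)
Your skeleton (monotonicity of cardinalities, absorbing singletons, exclusivity, then the dichotomy on the subgraph $H$ of vertices of cardinality $\geq 2$, hinging on the reachability lemma $(\star)$) is sound, and $(\star)$ is in fact true; but your proof of $(\star)$ fails whenever the height $h$ is greater than $1$, which is precisely the delicate case. No power of $\sigma$ can have a constant column on the whole alphabet when $h>1$: since $\gcd(h,l)=1$, the classes $\A_0,\dots,\A_{h-1}$ of Proposition \ref{prop:hpart} are pairwise disjoint and nonempty, each letter occurs in $x$ only at positions in one residue class mod $h$, and the $P$-th column of $\sigma^{m}$ maps $\A_j$ into $\A_{(jl^{m}+P)\bmod h}$; as $j\mapsto (jl^{m}+P)\bmod h$ is a bijection of $\ZZ/h\ZZ$, the image of the full alphabet under any composite column map meets all $h$ classes and has cardinality at least $h\geq 2$. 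Example 4 of the paper ($l=2$, $h=3$) is a concrete counterexample: its graph contains singleton vertices, yet no $m_\star$ and column $P$ exist with $\sigma^{m_\star}(b)_P$ constant over $b\in\A$, so ``one singleton vertex forces a full coincidence by primitivity'' is false. Note moreover that in the only case where your claim is meaningful, $h=1$, a singleton vertex \emph{is} a coincidence by the very definition of $G(\sigma)$ (the unique root is $\A_0=\A$, and the vertex reached from it by a label of value $P$ and length $m$ is $\{\sigma^{m}(b)_P : b\in\A\}$), so the appeal to primitivity and to Dekking's coincidence theory \cite{Dekking1978} does no work there either.

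The gap is repairable without coincidence theory. By construction, every vertex of $G(\sigma)$ --- in particular every singleton --- is the image $g_L(\A_{k_m})$ of a \emph{full} class under the composite column map $g_L$ of some label $L$, and by Lemma \ref{lemme:periodeschemins} every vertex $\C$ is contained in a single class $\A_j$ (it is the alphabet of an arithmetic subsequence of common difference $hl^{p}$, whose positions lie in one class mod $h$). Since $l$ is invertible mod $h$, choose $t$ with $l^{t}\geq h$ and a label $L'$ of length $t$ and value $P$ with $jl^{t}+P\equiv k_m \pmod h$; then $g_{L'}(\C)\subseteq\A_{k_m}$, hence $g_L(g_{L'}(\C))\subseteq g_L(\A_{k_m})$ is the singleton, proving $(\star)$ by pure monotonicity. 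The paper proceeds differently and avoids $(\star)$ altogether: assuming neither \eqref{aucunepa} nor \eqref{periodique}, it extracts a cycle among vertices of cardinality $\geq 2$ and shows by contradiction that a vertex $\C$ of the cycle has a singleton successor, because by Lemma \ref{lemme:periodeschemins} the depth-$j$ successors of $\C$ are the alphabets of the subsequences $(x_{i+kl^{j}+nhl^{p+j}})_{n}$, $0\leq i<l^{j}$, whose positions cover windows of $l^{j}$ consecutive indices; once $l^{j}\geq hl^{m}$ such a window meets the residue class of the given constant progression of difference $hl^{m}$, and since $hl^{m}$ divides $hl^{p+j}$ the corresponding successor is a singleton. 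Either repair restores your trichotomy; as written, the proposal's one genuinely substantial step is the one that breaks.
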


\begin{proof}
Suppose that we are neither in Case \eqref{aucunepa} nor in Case \eqref{periodique}. We will show that Case \eqref{panonbornees} is satisfied.

By hypothesis, the graph $G(\sigma)$ contains at least one singleton. Due to Remark \ref{rem:caracpa}, to this singleton corresponds a constant arithmetic subsequence of $x$, with common difference $hl^m$ for some integer $m$. 
Moreover, there exist arbitrarily long walks with vertices of cardinal greater or equal to 2. As the number of edges is finite, there exists a cycle in these vertices.

Suppose by contradiction that the vertices of this cycle do not have any singleton in their successors.
Pick $\C$ among these vertices. According to Lemma \ref{lemme:periodeschemins}, there exist two integers $k$ and $p$ such that $\C$ is the alphabet of the subsequence $(x_{k+nhl^p})_{n\in\NN}$.
Let $(\C , \C_i )$, $0\leq i < l$, be the $l$ edges starting in $\C$.
Each $\C_i$ is the alphabet of the subsequence $(x_{i+kl+nhl^{p+1}})_n$ for $0\leq i < l$. By a direct recurrence, the $j$-th successors of $\C$ contains the alphabets of each subsequence $x_{i+kl^j+nhl^{p+j}}$ for $0\leq i < l^j$. None of them are constant because $\C$ has no singleton in its successors. 
Then, the sequence $x$ contain arbitrarily long subwords with no letter in arithmetic progression. This contradicts the fact that there exist a constant arithmetic subsequence with common difference $hl^m$. 
Therefore, $\C$ has a singleton in its successors and Property \eqref{panonbornees} holds.
\end{proof}

For each of these cases, we now detail the consequences for arithmetic progressions. 
Property \eqref{aucunepa} holds if, and only if, the sequence $y$ admits a constant arithmetic subsequence (Proposition \ref{prop:Q2}). 

\begin{remark}\label{rem:graphesuccesseurs}
In the graph $G(\sigma)$, each alphabet has a cardinal greater or equal to each of its successors. In particular, every successor of a singleton is also a singleton.
\end{remark}

\begin{prop}
Every long enough walk in the graph $G(\sigma)$ ends in a singleton if, and only if, the sequence $x$ is periodic.
\end{prop}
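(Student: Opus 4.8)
The plan is to reformulate the statement entirely in terms of periods of $x$. Applying Lemma~\ref{lemme:periodeschemins} with the identity coding (so that the vertices of $G(\sigma)$ are literally the alphabets $\A((x_{k+nhl^m})_n)$), the admissible walks of length $m$ are in bijection with the residues $k$ modulo $hl^m$ via the expansion of $k$ in base $(l^m,\dots,l,1)$, and the terminal vertex of the walk attached to $k$ is exactly $\A((x_{k+nhl^m})_n)$. Hence ``every admissible walk of length $m$ ends in a singleton'' means precisely ``$(x_{k+nhl^m})_n$ is constant for every $k$'', i.e. ``$hl^m$ is a period of $x$''. By Remark~\ref{rem:graphesuccesseurs} a singleton has only singleton successors, so once this holds for one length it persists for all larger ones. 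Thus ``every long enough walk ends in a singleton'' is equivalent to ``$x$ admits a period of the form $hl^m$ for some $m$'', and the proposition reduces to the claim that $x$ is periodic if and only if it has a period of this particular shape.

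One direction is immediate: a period of the form $hl^m$ makes $x$ periodic. For the converse I would take $P$ to be the minimal period of $x$ and show that $P$ divides $hl^m$ for some $m$. Write $P=ab$, where $a$ is the largest divisor of $P$ coprime to $l$ and $b$ collects the primes of $P$ dividing $l$; since $b\mid l^m$ for $m$ large, it suffices to prove $a\mid h$. To get there, I would first observe that $(X,S)$ is isomorphic to the rotation $(\ZZ/P\ZZ,+1)$, so for each prime power $p^e$ exactly dividing $P$ with $p\nmid l$, reduction modulo $p^e$ is a factor map onto $(\ZZ/p^e\ZZ,+1)$; the latter has essential period $p^e$, so Corollary~\ref{coro:automatic} (together with Proposition~\ref{eigenvalues}) gives $p^e\in\PP(X,S)$. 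It then remains to invoke the height bound on the $l$-coprime part of $\PP(X,S)$: by Theorem~\ref{theo:dekking} and \eqref{align:dekkingspectrum} every element of $\PP(X,S)$ coprime to $l$ divides $h$, whence $p^e\mid h$. Running this over all relevant primes yields $a\mid h$, and therefore $P\mid hl^m$.

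The main obstacle is that Theorem~\ref{theo:dekking} is stated for \emph{non-periodic} subshifts, so in the periodic case I cannot cite it verbatim and must reprove the inclusion $p^e\mid h$ by hand. The key is the following closure property of the occurrence set $Z=\{\,n:x_n=x_0\,\}$: since $x$ is an admissible fixed point, $\sigma^m(x_0)$ begins with $x_0$, so $n\in Z$ forces $x_{l^m n}=\sigma^m(x_n)_0=x_0$, i.e. $l^mn\in Z$ modulo $P$. Consequently the image of $Z$ in $\ZZ/p^e\ZZ$ is invariant under the (invertible, as $p\nmid l$) multiplication by $l$, and I would combine this with the minimality of $P$ and the column/tower structure $Z_c=\bigsqcup_{r=0}^{l-1}(l\,\{k:\sigma(x_k)_r=c\}+r)$ to conclude that this image is $\{0\}$, that is $p^e\mid g_0$ where $g_0=\gcd\{n\geq1:x_n=x_0\}$. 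By the definition of the height used in Proposition~\ref{prop:hpart}, $h$ is the largest $l$-coprime divisor of $g_0$, so any $l$-coprime divisor of $g_0$ divides $h$; this gives $a\mid g_0$ and hence $a\mid h$. This closure-plus-minimality step is an instance of Dekking's height analysis and is where the real work lies; everything else is bookkeeping between walks, residues and periods.
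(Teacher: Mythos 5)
Your reduction of the proposition to the statement ``$x$ is periodic if and only if some $hl^m$ is a period of $x$'' is correct, and it is exactly the dictionary the paper itself uses (Lemma \ref{lemme:periodeschemins} plus the persistence of singletons from Remark \ref{rem:graphesuccesseurs}; the paper's forward direction is the same argument, with the explicit bound $2^d$ on walk length). The gap is in the one step you yourself flag as ``where the real work lies'': you never actually prove that the image of $Z=\{n : x_n=x_0\}$ in $\ZZ/p^e\ZZ$ is $\{0\}$, and the ingredient you do establish --- invariance of this image under multiplication by $l$ --- cannot do it alone. Take $P=9$, $l=2$, $p^e=9$: the set $\{0,3,6\}$ contains $0$ and is invariant under multiplication by $2$ modulo $9$, so closure under multiplication by $l$ is perfectly consistent with $g_0=3$, $h=3$ and $a=9\nmid h$. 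Ruling this scenario out genuinely requires the rigidity you only name (equal letters have equal images under $\sigma$, your ``column/tower structure''), and that argument is absent; as written, the hard direction of the equivalence is not proved.

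The missing step can be filled in a few lines, close in spirit to your sketch. If $x_n=x_{n'}$, then applying $\sigma^m$ (the fixed point being aligned at the origin) gives $x_{l^mn+s}=x_{l^mn'+s}$ for all $0\le s<l^m$; choosing $m$ with $l^m\ge P$, the two $P$-periodic sequences $(x_j)_j$ and $(x_{j+l^m(n-n')})_j$ agree on $\ge P$ consecutive indices, hence everywhere, so $P$ divides $l^m(n-n')$ and therefore $a\mid n-n'$ since $(a,l)=1$. Taking $n'=0$ and $n\in Z$ yields $Z\subseteq a\ZZ$, i.e. $a\mid g_0$; as the divisors of $g_0$ coprime to $l$ are closed under taking lcm, $a\mid h$, and $P=ab\mid hl^m$ for $m$ large, as you wanted. (Your detour through Corollary \ref{coro:automatic} and the factors $\ZZ/p^e\ZZ$ then becomes unnecessary.) I note in your favor that the paper's own proof of this direction is a bare citation of Lemma \ref{lemma:casessential} and Theorem \ref{theo:dekking}, although the latter is stated only for non-periodic subshifts; you are right that the periodic-case analogue must be proved by hand, and the argument above is precisely that missing lemma. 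But a proposal that stops at ``I would combine these to conclude'' has identified the work without doing it.
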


\begin{proof}
Suppose every long enough walk in the graph $G(\sigma)$ ends in a singleton 
The graph $G(\sigma)$ contains at most $2^d$ vertices (recall that $d = \card \A$), thus every path with length $2^d$ leads to a singleton.
As a consequence, each arithmetic subsequence with common difference $h l^{2^d}$ is constant. Then $x$ is periodic and its period is a divisor of $h l^{2^d}$.

If $x$ is a periodic sequence, its period is $1$ or is a divisor of $hl^m$ for some $m$ (Lemma \ref{lemma:casessential} and Theorem \ref{theo:dekking}). 
Then each subsequence with period $h l^m$ is constant and every walk of length  greater or equal than $m$ ends in a singleton.
\end{proof}

From $G(\sigma ) $ we define a forest $F(\sigma )$, that is a finite union of (infinite) trees $T_1, \dots , T_h$.
Let $i \in \{ 0, 1 , \dots , h-1 \}$.
The root of the tree $T_i$ is $(\A_i , 0)$.
The vertices of $T_i$ are divided into {\em floors}  ($0$th floor, $1$st floor, ...).
The $0$th floor is $\{ (\A_i , 0) \}$.
The $n$th floor consists of a finite collection of elements $(\B , n)$ where $(\B' , \B)$ is an edge of $G(\sigma )$ with  $(\B' , n-1)$ belonging to the $(n-1)$th floor, and, edges are the couples $((\B' , n-1) , (\B , n))$.
Roughly speaking in $F (\sigma )$ each vertex has the same successors as in graph $G(\sigma)$. 
Admissible walks in $F (\sigma )$ are starting from some $\A_i$.
We denote $s_m$ the number of vertices $(\C , m)$ where $\C$ is a singleton. 

Let us make an observation we will use in the proof of the next proposition.
Each such vertex $(\C , m)$ corresponds to  exactly one constant arithmetic subsequences of $x$ with common difference $hl^m$ (Proposition \ref{prop:grapheindices}) and produces $l$ distinct constant arithmetic subsequences of $x$ with common difference $hl^{m+1}$.
As a consequence, one has $s_{m+1}>ls_m$ if, and only if, the sequence $x$ has an essential period, for some letter, greater than $hl^m$ and dividing $hl^{m+1}$. In the converse case, we have $s_{m+1} = l s_m$.

\begin{prop}\label{prop:bouclesingleton}
There exists in the graph $G(\sigma)$ a cycle joining vertices of cardinal $\geq 2$, with one having a singleton in his successors, if, and only if, the essential periods for letters of $x$ are unbounded.
\end{prop}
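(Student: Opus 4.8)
The plan is to recast the statement in terms of the forest $F(\sigma)$ and the numbers $s_m$ of singleton vertices on its $m$th floor, so that it becomes a purely combinatorial assertion about long walks in the finite graph $G(\sigma)$. The key reduction is the equivalence: the essential periods for letters of $x$ are unbounded if and only if $F(\sigma)$ carries, at arbitrarily high floors, a singleton whose immediate predecessor is not a singleton — I will call these \emph{fresh} singletons. To obtain this I would combine the observation preceding the proposition (each singleton at floor $m$ has exactly $l$ singleton successors at floor $m+1$, so that the fresh singletons at floor $m+1$ are counted by $s_{m+1}-ls_m$, and this difference is positive exactly when a new essential period appears) with Lemma \ref{lemma:casessential}. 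Indeed, a fresh singleton at floor $m$ gives a constant subsequence of common difference $hl^m$ realized at some index $k$ for some letter $a$, while the predecessor subsequence of common difference $hl^{m-1}$ is not constant; hence the essential period $p\in\PP'(X,S)$ witnessing $hl^m\in{\rm Per}(x)$ at $(k,a)$ divides $hl^m$ but not $hl^{m-1}$ (otherwise the constant $p$-progression would cover the whole class modulo $hl^{m-1}$, making the predecessor constant). This forces $v_q(p)\geq m$ for some prime $q\mid l$, so $p\geq 2^m\to\infty$; conversely, if $p$ is essential and $m_p$ is least with $p\mid hl^{m_p}$, the subsequence of difference $hl^{m_p}$ is a fresh singleton at floor $m_p$, and $m_p\to\infty$ as $p\to\infty$. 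Thus unbounded essential periods and fresh singletons at arbitrarily high floors are interchangeable.

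Granting this, I would prove both implications by elementary graph combinatorics, using throughout that cardinalities of vertices do not increase along edges (Remark \ref{rem:graphesuccesseurs}) and that every vertex of $G(\sigma)$ is reachable from some root $\A_i$. For ``cycle $\Rightarrow$ unbounded'', let $C$ be a cycle through vertices of cardinal $\geq 2$, let $\C^\ast\in C$ admit a singleton among its successors, and fix a shortest path $P$ from $\C^\ast$ to a singleton, of length $t$, all of whose vertices except the last have cardinal $\geq 2$. Because $\C^\ast$ has cardinal $\geq 2$, any walk reaching $\C^\ast$ stays within cardinal $\geq 2$; fix such an admissible walk $W_0$ from a root to $\C^\ast$. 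For each $q\geq 1$, the admissible walk that runs $W_0$, then loops $C$ exactly $q$ times, then follows $P$, meets its first singleton only at the end, at floor $M_q=|W_0|+q\,|C|+t$. This is a fresh singleton, and $M_q\to\infty$, so fresh singletons occur at arbitrarily high floors.

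For ``unbounded $\Rightarrow$ cycle'', I would pick a fresh singleton at some floor $m>\card V$ and look at the root-to-vertex path $(\A_i,0),(\D_1,1),\dots,(\D_{m-1},m-1),(\C_m,m)$ in $F(\sigma)$. Since $\C_m$ is the first singleton on this path, all of $\A_i,\D_1,\dots,\D_{m-1}$ have cardinal $\geq 2$ and each reaches the singleton $\C_m$. As these are $m>\card V$ vertices of the finite graph, two of them coincide, and the segment between the two occurrences is a cycle through vertices of cardinal $\geq 2$, every vertex of which has a singleton among its successors. This is precisely the configuration in the statement, which assembles the equivalence.

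The hard part will be the reformulation of the first paragraph rather than the two graph arguments: one must verify with care that fresh singletons match new essential periods and that the two quantities tend to infinity together, which is where Lemma \ref{lemma:casessential}, the observation preceding the proposition and the valuation estimate $p\geq 2^m$ all enter. A minor but necessary point, handled by cardinality-monotonicity and the reachability of $C$ from a root, is that the walks built in the second paragraph are genuinely admissible walks of $F(\sigma)$ and that their first singleton occurs exactly where claimed.
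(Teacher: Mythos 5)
Your argument is correct and is essentially the paper's own proof in different packaging: your equivalence between fresh singletons and new essential periods is exactly the observation the authors state just before the proposition (via $s_{m+1}>ls_m$ together with Lemma \ref{lemma:casessential}), your walk assembled from $W_0$, $q$ turns around the cycle and the shortest path $P$ reproduces their argument with the cycle vertex $\B$ recurring at levels $b_k$ followed by a shortest path of length $N$ to a singleton, and your pigeonhole on a single long all-cardinal-$\geq 2$ path is their pigeonhole on the recurring parent alphabets $\B_k$. One slip, harmless here: by Lemma \ref{lemme:periodeschemins} the predecessor in $F(\sigma)$ of the vertex for the pair $(k,hl^m)$ is the vertex for $(\lfloor k/l\rfloor, hl^{m-1})$ — new digits are appended at the least significant end — and not the residue class of $k$ modulo $hl^{m-1}$ as your parenthetical argument assumes; since, by the counting you yourself invoke, a singleton at floor $m$ with non-singleton predecessor of either kind exists precisely when $s_m>l\,s_{m-1}$, which is the quantity both you and the paper actually use, the reduction in your first paragraph survives this misidentification.
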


\begin{proof}
Suppose there exists in $G(\sigma)$ a cycle joining vertices of cardinal $\geq 2$ with one having a singleton in his successors.
Let $\B$ be a vertex, i.e. an alphabet, of this cycle. 
We will prove that, infinitely often, $s_{m+1}> l s_m$.
In fact only one such $m$ would be sufficient from the observation made above.

Let $N$ be the length of a shortest path joining $\B$ to a singleton $\{a\}$.
As $\B$ belongs to a cycle, it will appear infinitely often in the forest $F(\sigma)$. We denote $(b_k)_{k\in\NN}$ the levels (in increasing order) containing $\B$. 
Then, each level $b_k+N$ contains the singleton $\{a\}$ as a direct successor of an alphabet with cardinal $\geq 2$. 
The number of singletons at level $b_k+N$ is $s_{b_k+N}>s_{b_k+N-1} l$ (we have $l$ successors of the singletons of level $b_k+N-1$ and at least the singleton $\{a\}$).
Therefore, for each $k\in \NN$, $x$ has a constant arithmetic subsequence with common difference greater than $h l^{b_k+N-1}$ and dividing $hl^{b_k+N}$.

Suppose that the set of essential periods for letters is unbounded. For each essential period $p_k$ (numbered in increasing order), let $n_k $ be the smallest integer such that $p_k$ divides $hl^{n_k}$. 
Of course, the sequence  $(n_k)_k$ goes to infinity with $(p_k)_k$.
Thus, this essential period corresponds to a singleton in the level $n_k$ that is the direct descendant of some $(\B_k , k)$ with $\B_k$ having a cardinal greater or equal to $2$.
As the number of distinct alphabets in the forest is finite, there exists some $k$ such that $\B_k$ appears in this forest an infinite number of times. 
Thus, it belongs to a cycle of $G(\sigma)$ or is the descendant of such a cycle, whose elements have cardinal greater or equal than the cardinal of $\B_k$.
This ends the proof.
\end{proof}

\textbf{Example 4.}
Let us consider the substitution:
$0\mapsto 01 , 1\mapsto 20 , 2\mapsto 13 , 3\mapsto 12$.
It has length $2$ and height $3$, which lead to the alphabets:
$\A_0 = \{0\}$,
$\A_1 = \{1\}$ and
$\A_2 = \{2,3\}$.
We obtain the following graph.

\begin{center}
\scalebox{0.8}{%
\begin{tikzpicture}[->,>=stealth',shorten >=1pt,auto,node distance=3cm,
                    semithick]
  \tikzstyle{every state}=[draw = black]

  \node[state] (B)   {$\{1\}$};
  \node[state] (A) [left of=B] 		 {$\{0\}$};
  \node[state] (C) [right of=B] 		 {$\{2,3\}$};
  \node[state] (D) [below left of=B] {$\{2\}$};
  \node[state] (E) [below right of=B]  {$\{3\}$};
  \node[state, draw = none] (F) [above=-2mm of A] {$\A_0$};
  \node[state, draw = none] (G) [above=-2mm of B] {$\A_1$};
  \node[state, draw = none] (H) [above=-2mm of C] {$\A_2$};

  \path (A) edge [bend left] node {0} (B)
            edge [loop left] node {1} (A)
        (B) edge node {1} (A)
            edge [bend left] node[ right=1mm]  {0} (D)
        (C) edge node[above] {0} (B)
            edge [loop right] node {1} (C)
        (D) edge [bend left] node[below]  {0} (B)
            edge  node [below] {1} (E)
        (E) edge [bend right] node {0} (B)
            edge [bend left] node {1} (D) ;
\end{tikzpicture}}
\end{center}

We are in Case \eqref{panonbornees} of Proposition \ref{prop:etudegraphe}, thus due to Proposition \ref{prop:bouclesingleton}, the essential periods are unbounded.
This can also be seen on the following forest.

\begin{center}
\begin{tikzpicture}[xscale=0.5,yscale=0.6]
\tikzstyle{fleche}=[->,>=latex,thick]
\tikzstyle{noeud}=[rectangle]
\tikzstyle{feuille}=[rectangle,scale=0.7]
\tikzstyle{etiquette}=[midway,fill=white,draw,scale=0.7]
\def\DistanceInterNiveaux{3}
\def\DistanceInterFeuilles{2}
\def\NiveauA{(-0.5)*\DistanceInterNiveaux}
\def\NiveauB{(-1)*\DistanceInterNiveaux}
\def\NiveauC{(-2)*\DistanceInterNiveaux}
\def\NiveauD{(-3)*\DistanceInterNiveaux}
\def\InterFeuilles{(1.1)*\DistanceInterFeuilles}
\node[noeud] (T1) at ({(1.5)*\InterFeuilles},{\NiveauA}) {$T_1$};
\node[noeud] (T2) at ({(5.5)*\InterFeuilles},{\NiveauA}) {$T_2$};
\node[noeud] (T3) at ({(9.5)*\InterFeuilles},{\NiveauA}) {$T_3$};
\node[noeud] (Ra) at ({(1.5)*\InterFeuilles},{\NiveauB}) {$(\{0\},0)$};
\node[noeud] (Raa) at ({(0.5)*\InterFeuilles},{\NiveauC}) {$(\{0\},1)$};
\node[feuille] (Raaa) at ({(0)*\InterFeuilles},{\NiveauD}) {$(\{0\},2)$};
\node[feuille] (Raab) at ({(1)*\InterFeuilles},{\NiveauD}) {$(\{1\},2)$};
\node[noeud] (Rab) at ({(2.5)*\InterFeuilles},{\NiveauC}) {$(\{1\},1)$};
\node[feuille] (Raba) at ({(2)*\InterFeuilles},{\NiveauD}) {$(\{2\},2)$};
\node[feuille] (Rabb) at ({(3)*\InterFeuilles},{\NiveauD}) {$(\{0\},2)$};
\node[noeud] (Rb) at ({(5.5)*\InterFeuilles},{\NiveauB}) {$(\{1\},0)$};
\node[noeud] (Rba) at ({(4.5)*\InterFeuilles},{\NiveauC}) {$(\{2\},1)$};
\node[feuille] (Rbaa) at ({(4)*\InterFeuilles},{\NiveauD}) {$(\{1\},2)$};
\node[feuille] (Rbab) at ({(5)*\InterFeuilles},{\NiveauD}) {$(\{3\},2)$};
\node[noeud] (Rbb) at ({(6.5)*\InterFeuilles},{\NiveauC}) {$(\{0\},1)$};
\node[feuille] (Rbba) at ({(6)*\InterFeuilles},{\NiveauD}) {$(\{0\},2)$};
\node[feuille] (Rbbb) at ({(7)*\InterFeuilles},{\NiveauD}) {$(\{1\},2)$};
\node[noeud] (Rc) at ({(9.5)*\InterFeuilles},{\NiveauB}) {$(\{2,3\},0)$};
\node[noeud] (Rca) at ({(8.5)*\InterFeuilles},{\NiveauC}) {$(\{1\},1)$};
\node[feuille] (Rcaa) at ({(8)*\InterFeuilles},{\NiveauD}) {$(\{2\},2)$};
\node[feuille] (Rcab) at ({(9)*\InterFeuilles},{\NiveauD}) {$(\{0\},2)$};
\node[noeud] (Rcb) at ({(10.5)*\InterFeuilles},{\NiveauC}) {$(\{2,3\},1)$};
\node[feuille] (Rcba) at ({(10)*\InterFeuilles},{\NiveauD}) {$(\{1\},2)$};
\node[feuille] (Rcbb) at ({(11)*\InterFeuilles},{\NiveauD}) {$(\{2,3\},2)$};
\draw[fleche] (Ra)--(Raa) node[etiquette] {$0$};
\draw[fleche] (Raa)--(Raaa) node[etiquette] {$0$};
\draw[fleche] (Raa)--(Raab) node[etiquette] {$1$};
\draw[fleche] (Ra)--(Rab) node[etiquette] {$1$};
\draw[fleche] (Rab)--(Raba) node[etiquette] {$0$};
\draw[fleche] (Rab)--(Rabb) node[etiquette] {$1$};
\draw[fleche] (Rb)--(Rba) node[etiquette] {$0$};
\draw[fleche] (Rba)--(Rbaa) node[etiquette] {$0$};
\draw[fleche] (Rba)--(Rbab) node[etiquette] {$1$};
\draw[fleche] (Rb)--(Rbb) node[etiquette] {$1$};
\draw[fleche] (Rbb)--(Rbba) node[etiquette] {$0$};
\draw[fleche] (Rbb)--(Rbbb) node[etiquette] {$1$};
\draw[fleche] (Rc)--(Rca) node[etiquette] {$0$};
\draw[fleche] (Rca)--(Rcaa) node[etiquette] {$0$};
\draw[fleche] (Rca)--(Rcab) node[etiquette] {$1$};
\draw[fleche] (Rc)--(Rcb) node[etiquette] {$1$};
\draw[fleche] (Rcb)--(Rcba) node[etiquette] {$0$};
\draw[fleche] (Rcb) -- (Rcbb) node[etiquette] {$1$};
\end{tikzpicture}
\end{center}

We easily see that a new constant arithmetic progression appears at each level as a successor of vertex $(\{2,3\},2)$. 
The length of $\sigma$ is a prime number, thus the set of essential periods is exactly $\{3\times 2^n : n\in\NN\}$.

\subsection{Comments for words occurring in arithmetic progressions}
In this work we concentrated on letters occurring periodically in substitutive sequences. 
The same questions could be asked for words. 
Using the substitutions on the words of length $n$ (see \cite{Queffelec:2010}) it is clear that the main results we obtained, i.e. Theorem \ref{decidabilite} and Propositions \ref{prop:grapheindices} and \ref{prop:Q2}, can be applied to words.
This is left as an exercise. 

\section{Questions}
We leave open our three questions for the substitutive sequences (that are not uniformly recurrent). 
For example, consider the subshift $(X,S)$ generated by the primitive substitution  $0\mapsto0120$, $1\mapsto 121$, $2\mapsto  200$. 
Let $x$ be an admissible fixed point.
One has $\PP (X,S)= \{ 2^m : m\geq 0\}$ .
By computer checking we did not find constant arithmetic subsequence for $x$ with a period less than $2^{20}$ but we do not know whether it exists for greater periods.

\bigskip

{\bf Acknowledgements.}
We would like to thank Anna Frid for helpful discussions that were at the origin of the definition of the graph $G(\sigma )$ above. 
This work was partially supported by the MathAmSud project Dynamics of Cantor systems: computability, combinatorial and geometric aspects (17-MATH-01).

\bibliographystyle{alpha}
\bibliography{Biblio}

\end{document}